\documentclass[11pt]{article}
\usepackage{amsmath,amsthm, amssymb, amsfonts, mathrsfs, enumitem}
\usepackage[mathcal]{euscript} 
\usepackage[sort&compress,numbers,comma,square]{natbib}
\usepackage[active]{srcltx}
\usepackage[dvips]{color}
\usepackage[colorlinks]{hyperref}

\long\def\comment#1{}


\def\Grp#1{\left(#1\right)}
\def\Cbr#1{\left\{#1\right\}}
\def\Sbr#1{\left[#1\right]}
\def\Flr#1{\left\lfloor#1\right\rfloor}

\def\Norm#1{\left\|#1\right\|}
\def\cf#1{\mathbf{1}\Cbr{#1}}
\def\snorm#1{\|{#1}\|}


\def\cum#1#2{{#1}_1+\cdots+{#1}_{#2}}           
\def\eno#1#2{{#1}_1, \ldots, {#1}_{#2}}         

\def\gv{\,|\,}

\def\toi{\to\infty}

\def\dd{\mathrm{d}}

\def\Sp#1{\sp{(#1)}}

\def\nth#1{\frac{1}{#1}}

\def\Coms{\mathbb{C}}
\def\Ints{\mathbb{Z}}
\def\Nats{\mathbb{N}}
\def\Reals{\mathbb{R}}

\addtolength{\oddsidemargin}{-0.75in}
\addtolength{\textwidth}{1.5in}
\addtolength{\topmargin}{-.75in}
\addtolength{\textheight}{1.5in}

\newtheorem{theorem}{Theorem}
\newtheorem{prop}{Proposition}
\newtheorem{cor}{Corollary}
\newtheorem{lemma}{Lemma}
\newtheorem{example}{Example}

\def\mean{\mathbb{E}}
\def\var{\mathbb{V}}
\def\pr{\mathbb{P}}
\def\ld{\lambda}  
\def\cpg{\gamma}  
\def\chf{\psi}    
\def\che{\Psi}    
\def\levy{\text{L\'evy}}
\def\rx{\epsilon}
\def\ess{\mathop{\rm ess}}
\def\tr{\mathrm{tr}}

\def\dgamma{\mathrm{Gamma}}
\def\dpois{\mathrm{Poisson}}
\def\dtv{d_{\rm TV}}
\def\dks{d_{\rm KS}}
\def\sppt{\text{sppt}}  

\def\rdf{\mathscr{S}} 
\def\Re{\mathrm{Re}}

\def\Pd{\partial}
\def\th{{\rm th}}
\def\tr{{\rm tr}}

\def\ip#1#2{\langle{#1},{#2}\rangle}    
\def\iunit{\mathrm{i}}                  
\def\ft{\widehat}                       
\def\std#1{{#1}^*}                      

\def\SRM{A}

\begin{document}
\begin{center}
  \large
  \textbf{Nonnormal small jump approximation of infinitely
    divisible distributions
  }
  \\[1.5ex]
  \normalsize
  Zhiyi Chi\\
  Department of Statistics\\
  University of Connecticut \\
  Storrs, CT 06269, USA \\[.5ex]
  E-mail: zhiyi.chi@uconn.edu \\[1ex]
  \today \footnote{Submitted for journal publication on March 28, 2013}
\end{center}

\begin{abstract}
  We consider a type of nonnormal approximation of infinitely
  divisible distributions that incorporates compound Poisson, Gamma,
  and normal distributions.  The approximation relies on achieving
  higher orders of cumulant matching, to obtain higher rates of
  approximation error decay.  The parameters of the approximation are
  easy to fix.  The computational complexity of random sampling of the
  approximating distribution in many cases is of the same order as
  normal approximation.  Error bounds in terms of total variance
  distance are derived.  Both the univariate and the multivariate
  cases of the approximation are considered.
  
  \medbreak\noindent
  \emph{Keywords and phrases.}  Infinitely divisible; normal
  approximation; compound Poisson approximation; Gamma approximation;
  cumulant matching; sampling

  \medbreak\noindent
  2000 Mathematics Subject Classifications: Primary 60E07; Secondary
  60G51.

\end{abstract}  

\section{Introduction}
Simulation of infinitely divisible (i.d.) random variables has many
applications.  In most cases, since closed formulas of i.d.\
distributions are unavailable, good approximation methods are desired.
Normal approximation of i.d.\ distributions, which was studied in
\cite{lorz:91:stat} and later developed in \cite{asmussen:01,
  cohen:07} in the framework of small jump approximation, has received
much attention in the literature \cite{veillette:12:spa,
  fournier:11:esps, kawai:11:jcam, dereich:11:aap, kohatsu:10:spa,
  hilber:09, baeumer:12:jap}.

The idea of small jump normal approximation is as follows.  Denote by
$\ld$ the $\levy$ measure of an i.d.\ random variable $X$.  Given
$r>0$, decompose $\ld = \ld_r + (\ld -\ld_r)$, such that $\ld-\ld_r\ge
0$ has finite mass.  Correspondingly, $X=X_r + \Delta_r$, where $X_r$
and $\Delta_r$ are independent i.d.\ random variables with $\levy$
measures $\ld_r$ and $\ld - \ld_r$, respectively.  Then $X_r$ is
approximated by a Gaussian random variable, while $\Delta_r$ is
sampled using standard methods for compound Poisson random variables.
Presumably, in order for the approximation to have a certain degree of
precision, the support of $\ld_r$ should be in a small neighborhood of
0.  The size of the neighborhood is controlled by $r$.  For the
univariate case, it is natural to set $r$ equal to the maximum jump
size \cite{asmussen:01}.  However, for the multivariate case, such use
of $r$ can be restrictive.  Generally speaking, one can use $r$ to
index any tunable quantity, as long as it controls (indirectly) the
size of the support of $\ld_r$ \cite{cohen:07}.

Normal approximation relies on second-order moment matching
between $X_r$ and a Gaussian random variable, by which we mean the
matching of their first and second moments.  This is equivalent to
second-order cumulant matching.  Without specifying details, by
certain measures, the error of the approximation in the univariate
case is bounded by 
\begin{align*}
  C |\kappa|_{3,X_r}/\kappa_{2,X_r}^{3/2},
\end{align*}
where $C$ is a universal constant, $\kappa_{2,X_r}$ is the second
cumulant of $X_r$, and for $j\ge 3$, $|\kappa|_{j,X_r} = \int |x|^j
\ld_r(\dd x)$ is the $j\th$ ``absolute cumulant'' of $X_r$
\cite{asmussen:01}.  The best currently available value of $C$ is
0.4785 (\cite{nourdin:peccati:12}, p.~71).   Also, in some symmetric
cases, since the third cumulants of $X_r$ and the Gaussian random
variable are 0, $|\kappa|_{3,X_r}$ in the bound can be more or less
replaced with $|\kappa|_{4,X_r}$, while the power of $\kappa_{2,
  X_r}$ is raised to 2 \cite{asmussen:01}.  From the pattern of the
bound, one could guess that, if $X_r$ and some $Y_r$ have the same
cumulants of order 1, \ldots, $q-1$ with $q\ge 5$, then $X_r$ could be
approximated by $Y_r$ with the error being bounded by
\begin{align*}
  C(r) (|\kappa|_{q, X_r} + |\kappa|_{q,Y_r})/\kappa_{2,X_r}^{q/2},
\end{align*}
where $C(r)$ is a near-constant, at least when $r$ is small, or most
ideally, a universal constant which may depend on the dimension of $X$
but is not very large.  Elementary calculations indicates that in many
cases, the above bound vanishes at a genuinely higher rate than the
bound for normal approximation as $r\to 0+$.  Since the $q\th$
cumulant of a Gaussian random variable is 0, the above bound, if true,
is consistent with the bound for normal approximation.

Even by some rough analysis on characteristic functions, there is good
reason to expect that the bound is true for both univariate and
multivariate cases.  However, before attempting to work out the
detail, perhaps one should first ask if such an approximation can
possibly be implemented easily.  The meaning of the question is
twofold.  First, the distribution of the approximating random variable
should be easy to identify; preferably, it is i.d.  Second, the
approximating random variable should be easy to sample; preferably,
the computational complexity of the sampling is of the same order as
the normal approximation.  If the answer to the question is positive,
then the next question is how large $q$ can be.  It can be anticipated
that the larger $q$ is, the faster the error of approximation vanishes
as $r\to 0+$.  After the answers to the two questions are in place, a
wide range of available techniques can potentially be modified to
establish the error bound (e.g.\ \cite{bhattacharya:76,
  barbour:chen:05, chen:goldstein:shao:11, nourdin:peccati:12}).

Clearly, cumulant matching is equivalent to moment matching.
Actually, our proof of the above type of bound eventually will be
based on moment matching.  However, thanks to the $\levy$-Khintchine
representation, it is more natural and convenient to consider 
cumulants than moments.  We shall show that it is fairly easy to
construct approximating i.d.\ random variables with matching cumulants
up to at least the fourth order, in other words, we can get
at least $q=5$.  In many important cases, we can get $q=6$, and in the
symmetric cases, we can get $q=10$.  For the univariate case, the
construction is particularly simple.  The approximating i.d.\ random
variable is the sum of a compound Poisson random variable and an
independent Gaussian random variable, with the former in turn being
the sum of a Poisson number of i.i.d.\ Gamma random variables.
Importantly, using algorithms already available \cite{devroye:86b,
  hormann:04}, the computational complexity of the random sampling for
the approximation is universally bounded, so it is of the same order
as the random sampling from a normal distribution.

We shall refer to the approximation as Poisson-Gamma-Normal (PGN)
approximation, although a longer name like ``compound Poisson-Normal
approximation with Gamma summands and higher order of cumulant
matching'' might be more appropriate.  We shall bound its error in
terms of total variation distance by combining Fourier analysis,
Lindeberg method (cf.\ \cite{chatterjee:06:ap} for a modern
application of it), and a device in \cite{asmussen:01}.  The results
are nonasymptotic and of the aforementioned type.  Asymptotically,
when applied to $X_r$, the approximation yields substantially higher
rate of precision than normal approximation as $r\to 0+$.  Of course,
on modern treatments of Poisson, compound Poisson, and normal
approximations, there is now an extensive literature, and on Gamma and
other types of approximations, there is also a growing literature; see
\cite{barbour:chen:05, chatterjee:11:aap, gotze:91:ap,
  chen:goldstein:shao:11, nourdin:peccati:12, reinert:05,
  nourdin:09:ptrf} and references therein.  However, it appears that
there has been little work on using convolutions of different types of
simple distributions to improve approximation, in the sense that the
error of approximation vanishes at a faster rate asymptotically.

For the multivariate case, the issue of approximation becomes quite
more involved, which is a well documented phenomenon
\cite{sazonov:68:sank, bhattacharya:76, gotze:91:ap, cohen:07,
  chen:goldstein:shao:11, nourdin:peccati:12}.  For normal
approximation of i.d.\ distributions, several important issues unique
to the multivariate case are identified and addressed in
\cite{cohen:07}.  The same issues also arise in the type of
approximation considered here and actually become more serious.  To
address them, we consider a ``radial'' cumulant matching approach.
Its idea is to apply the same cumulant matching method for the
univariate case to each radial direction in the $\levy$-Khintchine
representation, in such a way that, when the approximating $\levy$
measures and Gaussian measures along different radial directions are
``bundled'' together, we get a valid multivariate i.d.\ distribution
with desired order of cumulant matching and with the covariance of its
Gaussian component being precisely evaluated.  Although the approach
does not completely resolve the aforementioned issues, it seems to
work well in many important cases.

As in the univariate case, we shall prove a similar type of bound for
the error of the proposed PGN approximation in terms of total
variation distance.  On the other hand, the issue of computational
complexity needs to be considered more carefully.  Recall the 
approximation is applied to $X_r$ in the decomposition $X = X_r +
\Delta_r$.  As in the univariate case, the approximating random
variable for $X_r$ has a compound Poisson component.  Unfortunately,
this component now can only be sampled by summing a large number of
Poisson events.  As a result, the computational complexity of the
approximation of $X_r$ is much greater than normal approximation.
However, one has to take into account the computational complexity of
the sampling of $\Delta_r$.  We will argue using an 
example that although the proposed PGN approximation as a whole has
greater computational complexity than normal approximation,
asymptotically, as $r\to 0+$,  the two have the same order of
complexity.  Because the PGN approximation can yield substantially
higher rate of convergence, therefore, at least asymptotically, it is
worth the extra computation.  Note that whereas in \cite{cohen:07},
the focus is the approximation of the related $\levy$ processes, our
discussion is restricted to i.d.\ distributions.  An extension of PGN
approximation to processes will be subject to future work.

In Section \ref{sec:prelim}, we shall set up notation and collect
useful facts about i.d.\ distributions.  Sections \ref{sec:univariate}
and \ref{sec:multivariate} consider PGN approximation for univariate
i.d.\ distributions and multivariate i.d.\ distributions,
respectively.  The proofs of the main results in these two sections
are collected in Section \ref{sec:proof-main} and the proofs of
related technical results are collected in Section
\ref{sec:proof-aux}.

\section{Preliminaries} \label{sec:prelim}
\subsection{Notation}
Denote $\Ints_+=\{0\}\cup\Nats$ and $\Reals_+=[0,\infty)$.  If $f(x)$
is a function  on $\Reals^d$, where $x=(\eno x d)$, then by
$f\Sp\alpha(x)$ or $\Pd^\alpha f(x)$ we mean 1) $\alpha=(\eno\alpha
d)\in \Ints_+^d$, and 2) $f\Sp\alpha(x) = \Pd^{\alpha_1}_1 \cdots
\Pd^{\alpha_d}_d f(x)$, where $\Pd^k_j$ denotes the $k\th$-order
partial derivative with respect to $x_j$.  The order of $\alpha$ is
defined to be $|\alpha|=\cum \alpha d$.  Denote $x^\alpha =
x_1^{\alpha_1} \cdots x_d^{\alpha_d}$ and $\alpha! = \alpha_1! \cdots
\alpha_k!$.  Denote by $\rdf(\Reals^d)$ the space of rapidly
decreasing function on $\Reals^d$.  It is a basic fact that the
Fourier transform $h\to \ft h(t) = \int e^{\iunit \ip   t x} h(x)\,\dd
x$ is an homeomorphism of $\rdf(\Reals^d)$ onto itself
(\cite{grafakos:08}, p.~103).

Denote by $\sppt(\nu)$ the support of a measure $\nu$.  For two random
variables $X$ and $Y$, their total variation distance
\cite{barbour:chen:05} is denoted by
\begin{align*}
  \dtv(X,Y) = \sup\{\pr\{X\in A\} - \pr\{Y\in A\}: A \text{
    measurable}\}
\end{align*}
and, if $X$, $Y\in \Reals$, their Kolmogorov-Smirnov distance is
denoted by 
\begin{align*}
  \dks(X,Y) = \sup\{|\pr\{X\le x\} - \pr\{Y\le x\}|: x\in\Reals\}.
\end{align*}

For any i.d.\ random variable $X\in \Reals^d$, denote by $\chf_X$ and
$\che_X$ its characteristic function and characteristic exponent,
respectively
\begin{align*}
  \chf_X(t) = e^{-\che_X(t)} = \mean [e^{\iunit\ip t X}],
  \quad
  t\in\Reals^d.
\end{align*}
Let $f_X$ be the probability density of $X$.  If it exists, then
$\chf_X = \ft f_X$.  Denote
\begin{align*}
  \kappa_{\alpha, X} = \left.\Pd^\alpha \ln\mean[e^{\ip t
      X}]\right|_{t=0}, \quad
  |\kappa|_{\alpha, X} = \int |u^\alpha|\,\ld(\dd u).
\end{align*}
The quantity $\kappa_{\alpha, X}$ is known as the $\alpha\th$ cumulant
of $X$.  It is well defined provided that $\mean [e^{\ip t X}]<\infty$
for all $t$ in a neighborhood of 0.  Some properties of cumulants can
be found in \cite{nourdin:peccati:12}.  We shall refer to
$|\kappa|_{\alpha, X}$ as the $\alpha\th$ absolute cumulant of $X$. 

\subsection{Basic assumptions and facts} \label{ss:assumptions}
Let $X\in \Reals^d$ be i.d.\ with $\levy$ measure $\ld$.  We will
always assume
\begin{align}  \label{e:ID}
  \che_X(t) = \int (1+\iunit \ip t u
  -e^{\iunit\ip t u})\, \ld(\dd u), \quad t\in \Reals^d,
\end{align}
in particular, $X$ has no Gaussian component and $\mean X=0$.  The
assumption causes no loss of generality since, if necessary, we can
decompose $X$ as $X'+X''$, such that $X'$ has a  $\levy$ measure
satisfying \eqref{e:ID} and $X''$ is a compound Poisson random
variable.  Then we can take $X' - \mean X'$ as the new $X$.  We will
also always assume
\begin{align*}
  \ld(\Reals^d)=\infty.
\end{align*}
Under the assumption, $X$ is not compound Poisson and $\pr\{X=x\}=0$
for $x\in\Reals^d$ (\cite{sato:99}, Theorem 27.4).  It is known
that if $d=1$ and $\ld(\Reals) < \infty$ then $X$ does not admit
normal approximation \cite{asmussen:01}.  The assumption excludes the
case of lattice valued i.d.\ random variables, for which
Poisson-Charlier approximation has been studied \cite{lorz:91:stat,
  barbour:chen:05}.

Recall that for any $a>0$, $\mean \snorm{X}^a < \infty$ if and only if
$\int \cf{\snorm u>1}\snorm u^a\,\ld(\dd u)<\infty$ (\cite{sato:99},
p.~159--160).  If $\ld$ is a Borel measure on $\Reals^d$ with
$\ld(\{0\})=0$ and $\int (\snorm u^2\wedge 1)\, \ld(\dd u)<\infty$,
then $\ld$ is the $\levy$ measure of some i.d.\ random variable
(\cite{sato:99}, Theorem 8.1).  Also, if $\sppt(\ld)$ is bounded, then
$\mean[e^{\ip t X}]<\infty$ for all $t\in\Reals^d$ (\cite{sato:99},
Theorem 25.17).  By differentiation, 
\begin{align*}
  \kappa_{\alpha, X} = 
  \begin{cases}
    0 & |\alpha|=1\\
    \int u^\alpha\,\ld(\dd u) & |\alpha|>1
  \end{cases}
\end{align*}
for all $\alpha$.  It is easy to see that if each $\alpha_i$ is even
or $\sppt(\ld)\in \Reals_+^d$, then $\kappa_{\alpha, X} =
|\kappa|_{\alpha,X}$.  Also,
\begin{align*}
  \var(X) = \int uu'\,\ld(\dd u), 
\end{align*}
and hence $\tr(\var(X)) = \int \snorm u^2\,\ld(\dd u)$, where $\tr(A)$
denotes the trace of a square matrix $A$.

Let $f\in C(\Reals^d)$.  For $n\in \Nats$, let $U_n$ be i.d.\ with
$\che_{U_n} = n^{-1} \che_X$.  If $\sppt(f) 
\subset \Reals^d \setminus \{0\}$ and is compact, then $n\mean f(U_n)
\to \int f\,\dd\ld$, which directly follows from the vague convergence
of $n\pr\{U_n\in\dd x\}$ to $\ld(\dd x)$ on $\{x: \snorm x>\rx\}$
given $\rx>0$ (\cite{bertoin:96}, p.~39); see \cite{kallenberg:02}
for detail on vague convergence.  The next result, which will be used
later, concerns the case where $\sppt(f)$ is not a compact set in
$\Reals^d\setminus\{0\}$.  When $d=1$ and $f(x) = |x|^p$ with $p>2$,
the result is established as Lemma 3.1 in \cite{asmussen:01}.  
However, as seen from the case $X\sim N(0,1)$, the asserted
convergence in general is not true if $f(x) = x^2$.
\begin{prop} \label{prop:AR}
  Suppose $|f(x)|\le g(\snorm x)$, where $g\in C(\Reals_+)$ is
  nondecreasing with $g(t) = o(t^2)$ as $t\to 0+$.  Suppose $\mean
  \snorm X^2<\infty$ and one of the following holds, 1) $\mean
  g(c\snorm{X'})<\infty$ for some $c>1$, where $X'=X_1-X_2$, with
  $X_i$ i.i.d.\ $\sim X$, 2) $\mean g(2 c\snorm{X})<\infty$ for some
  $c>1$, or 3) provided $X$ is symmetric, $\mean g(\snorm{X})<\infty$.
  Then $f \in L^1(\ld)$ and $n \mean f(U_n)\to \int f\,\dd\ld$ as
  $n\toi$.
\end{prop}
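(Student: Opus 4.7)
The plan is to decompose $\Reals^d$ into three regions by $\snorm x$---a near-zero ball $\{\snorm x\le\eta\}$, an annulus $\{\eta<\snorm x\le M\}$, and a far tail $\{\snorm x>M\}$---handle $n\mean f(U_n)$ on each piece separately, and then send $\eta\to 0+$ and $M\toi$. On the annulus, vague convergence of $n\pr\{U_n\in\dd x\}$ to $\ld(\dd x)$ on $\Reals^d\setminus\{0\}$, combined with $\ld\{\snorm x>\eta\}<\infty$, gives
\begin{align*}
  n\mean f(U_n)\cf{\eta<\snorm{U_n}\le M}\longrightarrow\int_{\eta<\snorm u\le M} f\,\dd\ld
\end{align*}
after sandwiching $\cf{\eta<\snorm x\le M}$ between continuous compactly supported functions (at continuity radii of $\ld$). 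The near-zero region is easy: the hypothesis $g(t)=o(t^2)$ as $t\to 0+$ gives, for any $\rx>0$, some $\eta$ with $g(t)\le \rx t^2$ on $[0,\eta]$; since $\mean X=0$ and $\che_{U_n}=\che_X/n$ imply $n\mean\snorm{U_n}^2=\tr(\var X)$, we obtain $n\mean|f(U_n)|\cf{\snorm{U_n}\le\eta}\le \rx\tr(\var X)$ uniformly in $n$, which is arbitrarily small in $\rx$.

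The main obstacle is the far tail, where the plan is to establish
\begin{align*}
  n\pr\{\snorm{U_n}>ct\}\le C\pr\{\snorm Y>t\},\quad t\ge t_0,\ n\ge n_0,
\end{align*}
with $Y=X'$ under hypotheses 1 and 2 and $Y=X$ under hypothesis 3, and with $c>1$ matching the constant in the hypothesis. The symmetric case 3 is direct: a reflection argument for symmetric i.i.d.\ $U_n^{(i)}$ (splitting on the index attaining the maximum and flipping the sign of that summand) yields $\pr\{\max_i\snorm{U_n^{(i)}}>t\}\le 2\pr\{\snorm X>t\}$; combined with the elementary bound $n\pr\{\snorm{U_n}>t\}\le 2\pr\{\max_i\snorm{U_n^{(i)}}>t\}$, valid whenever Chebyshev forces $n\pr\{\snorm{U_n}>t\}\le 1$ (i.e., $t^2\ge \tr(\var X)$), this gives the target with $c=1$. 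For the non-symmetric cases, first symmetrize by taking i.i.d.\ $V_n^{(i)}\sim U_n$ and $\tilde U_n^{(i)}=U_n^{(i)}-V_n^{(i)}$; these are symmetric and sum in law to $X'$, and the same route gives $n\pr\{\snorm{\tilde U_n}>t\}\le 4\pr\{\snorm{X'}>t\}$. To return from $\tilde U_n$ to $U_n$, note that $\{\snorm{U_n}>ct\}\cap\{\snorm{V_n}\le (c-1)t\}\subset\{\snorm{\tilde U_n}>t\}$ by the triangle inequality; Chebyshev makes $\pr\{\snorm{V_n}\le (c-1)t\}\ge 1/2$ for $n\ge n_0$ (with $n_0$ depending only on $c$ and $\var X$), so by independence $\pr\{\snorm{U_n}>ct\}\le 2\pr\{\snorm{\tilde U_n}>t\}$, completing case 1. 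Case 2 reduces to case 1 via $\snorm{X'}\le 2\max(\snorm{X_1},\snorm{X_2})$ and monotonicity of $g$, which gives $\mean g(c\snorm{X'})\le 2\mean g(2c\snorm X)<\infty$.

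With this tail estimate in hand, Fubini for the Lebesgue--Stieltjes measure $\dd g$ gives
\begin{align*}
  n\mean g(\snorm{U_n})\cf{\snorm{U_n}>M}\le C'\mean g(c\snorm Y)\cf{c\snorm Y>M},
\end{align*}
which tends to $0$ as $M\toi$ by dominated convergence since $\mean g(c\snorm Y)<\infty$. Combining the three contributions, with $\eta\to 0+$ then $M\toi$, proves the convergence. The integrability $f\in L^1(\ld)$ is a by-product: $g(t)=o(t^2)$ together with $\int(\snorm u^2\wedge 1)\,\ld(\dd u)<\infty$ handles $\{\snorm u\le 1\}$, while Fatou on the annular convergence plus the uniform tail bound controls $\{\snorm u>1\}$.
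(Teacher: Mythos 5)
Your proof is correct and follows essentially the same route as the paper: the same three-region split (small, annular, tail), the same small-$t$ second-moment bound, the same reduction of $n\pr\{\snorm{U_n}>\cdot\}$ to a maximum via $1-(1-p)^n$, the symmetrization/L\'evy maximal inequality step, and the layer-cake argument for the tail. The only cosmetic difference is the order in which you apply the triangle-inequality set containment versus the max trick (you symmetrize a single $U_n$ and then pass to maxima of $\tilde U_n^{(i)}$, while the paper passes to maxima of $U_{nk}$ first and then symmetrizes), which leads to the same constants and conclusion.
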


\section{Univariate Poisson-Gamma-Normal approximation} 
\label{sec:univariate}
\subsection{Cumulant matching}
For simplicity and without loss of generality, we will only consider
two cases 1) $\sppt(\ld)\subset \Reals_+$ and 2) $X$ is symmetric.
First, suppose $\sppt(\ld)\subset \Reals_+$.  Given $r>0$,
decompose $X=X_r + \Delta_r$, where $X_r$ and $\Delta_r$ are
independent i.d.\ random variables such that
\begin{align}  \label{e:ID-r}
  \che_{X_r}(t)
  =
  \int (1+\iunit t u-e^{\iunit t u})\, \ld_r(\dd u), \quad
  \text{with}\ \ld_r(\dd u) = \cf{u<r} \ld(\dd u).
\end{align}
Given $p\ge -1$, let $Y_r$ be an i.d.\ random variable with
\begin{align} \label{e:Gamma-r}
  \che_{Y_r}(t) =
  \int_0^\infty (1+\iunit t u-e^{\iunit t u} ) \cpg_r(\dd u),
  \quad\text{with}\
  \cpg_r(\dd u) = m(r) u^p e^{-u/s(r)} \,\dd u,
\end{align}
where $m(r)>0$ and $s(r)>0$ are constants that need to be determined.
Finally, let
\begin{align} \label{e:pgn-r}
  T_r = Y_r + \sigma(r) Z, \quad Z\sim N(0,1) \text{ independent of }
  Y_r,
\end{align}
where $\sigma(r)>0$ is a constant that needs to be determined.

We shall use $T_r+\Delta_r$ to approximate $X$, or equivalently, use
$T_r$ to approximate $X_r$.  But first, let us point out how easy
it is to sample $T_r$.  Clearly, the issue is the sampling of $Y_r$.
Since $Y_r= U-\mean U$, where $U\ge 0$ is i.d.\ with $\levy$
density $m(r)\cf{u>0} u^p e^{-u/s(r)}$, and $\mean U = \Gamma(p+2)
m(r) s(r)^{p+2}$, we only need to consider the computational
complexity of the sampling of $U$.  If $p=-1$, then $U\sim
\dgamma(m(r), s(r))$, the Gamma distribution with shape parameter
$m(r)$ and scale parameter $s(r)$.  It is known that the sampling of
$\dgamma(a,b)$ has universally bounded complexity regardless of
$(a,b)$ (\cite{devroye:86b}, p.~407--420).  If $p>-1$, then
$U\sim\sum_{i=1}^ N\xi_i$, where $N\sim \dpois(a)$ with
$a=\int_0^\infty m(r) u^p e^{-u/s(r)}\,\dd u = \Gamma(p+1) m(r) 
s(r)^{p+1}$, and $\xi_i$ are i.i.d.\ $\dgamma(p+1, s(r))$ random
variables independent of $N$.  The sampling of $\dpois(a)$ is known to
have universally bounded complexity (\cite{devroye:87} or
\cite{hormann:04}, p.~228--241).  On the other hand, conditional on
$N$, $U\sim \dgamma(N(p+1), s(r))$.  Therefore, the sampling of $U$,
and hence that of $T_r$, has the same order of complexity as the
sampling of a normal random variable.

Due to the $\levy$-Khintchine representation of $T_r$, we refer to
the approximation of $X$ by $T_r+\Delta_r$, or $X_r$ by $T_r$, as
Poisson-Gamma-Normal (PGN) approximation.

It is easy to see $\mean X_r = \mean T_r = \mean Y_r=0$, and for $j\ge
2$,
\begin{align}
  \begin{array}{c}
    \kappa_{j,X_r} = \int u^j \, \ld_r(\dd u), \quad
    \kappa_{j,T_r} = \kappa_{j,Y_r} + \cf{j=2}\sigma(r)^2,
    \\[1ex]
    \text{with}\ \ 
    \kappa_{j,Y_r} =\Gamma(j+p+1) m(r) s(r)^{j+p+1}.
  \end{array}
  \label{e:cum-X-Y}
\end{align}
This is the starting point of cumulant matching between $X_r$ and
$T_r$.  In the next result, we allow $r=\infty$, so it applies to any
i.d.\ random variable with finite fourth cumulant.

\begin{prop}[Fourth-order cumulant matching] \label{prop:cum-match}
  Fix $0<r\le \infty$.  If $r=\infty$, also assume $\kappa_{4,
    X_r}<\infty$.  Then for all large $p$,
  \begin{align} \label{e:pgn-p}
    \frac{p+4}{p+3} <
    \frac{\kappa_{2,X_r}\kappa_{4,X_r}}{\kappa_{3,X_r}^2}.
  \end{align}
  For any $p\ge -1$ satisfying \eqref{e:pgn-p}, if
  \begin{align}
    s(r) = \frac{\kappa_{4, X_r}}{(p+4)\kappa_{3,X_r}},
    \quad
    m(r) = \frac{\kappa_{3, X_r}}{\Gamma(p+4) s(r)^{p+4}},
    \label{e:pgn-s}
  \end{align}
  and if $Y_r$ is defined by \eqref{e:Gamma-r}, then $\kappa_{2,X_r} >
  \kappa_{2,Y_r}$, and by setting
  \begin{align}
    \sigma(r) = (\kappa_{2,X_r}- \kappa_{2,Y_r})^{1/2},
    \label{e:pgn-v}
  \end{align}
  $\kappa_{j,X_r} = \kappa_{j,T_r}$ for $2\le j\le 4$.
\end{prop}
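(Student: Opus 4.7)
The plan has two separate tasks. Part 2 (the cumulant matching itself) reduces to direct algebra once the formulas for $s(r)$ and $m(r)$ are substituted. Part 1 (the existence of admissible $p$) reduces to a Cauchy-Schwarz inequality, and the only real subtlety is justifying that the inequality is strict.

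For Part 2, I would substitute the prescribed $s(r)$ and $m(r)$ into the identity $\kappa_{j,Y_r}=\Gamma(j+p+1)\,m(r)\,s(r)^{j+p+1}$ from \eqref{e:cum-X-Y}. This immediately gives $\kappa_{3,Y_r}=\Gamma(p+4)\,m(r)\,s(r)^{p+4}=\kappa_{3,X_r}$ from the definition of $m(r)$, and then $\kappa_{4,Y_r}=(p+4)\,s(r)\,\kappa_{3,Y_r}=\kappa_{4,X_r}$ from the definition of $s(r)$. Using $\Gamma(p+3)=\Gamma(p+4)/(p+3)$ one similarly obtains
\begin{align*}
  \kappa_{2,Y_r}
  =\frac{\kappa_{3,Y_r}}{(p+3)\,s(r)}
  =\frac{(p+4)\,\kappa_{3,X_r}^2}{(p+3)\,\kappa_{4,X_r}},
\end{align*}
so $\kappa_{2,X_r}>\kappa_{2,Y_r}$ is algebraically equivalent to \eqref{e:pgn-p}. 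Since the Gaussian component of $T_r$ contributes only to the second cumulant, setting $\sigma(r)^2=\kappa_{2,X_r}-\kappa_{2,Y_r}$ then gives $\kappa_{2,T_r}=\kappa_{2,X_r}$ while leaving $\kappa_{3,T_r}=\kappa_{3,X_r}$ and $\kappa_{4,T_r}=\kappa_{4,X_r}$ unchanged, which is exactly the asserted matching.

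For Part 1, since $(p+4)/(p+3)\searrow 1$ as $p\to\infty$, it suffices to prove the strict Cauchy-Schwarz inequality $\kappa_{3,X_r}^2<\kappa_{2,X_r}\,\kappa_{4,X_r}$, applied to the pair $u$ and $u^2$ in $L^2(\ld_r)$. The main obstacle is ruling out equality, which would force $\ld_r$ to be a point mass at some $c>0$. To dispose of this I would observe that the standing assumption $\ld(\Reals)=\infty$ together with $\int(u^2\wedge 1)\,\ld(\dd u)<\infty$ implies $\ld([r,\infty))<\infty$ for every $r>0$ (splitting at $1$ if necessary and bounding the piece in $[r,1]$ by $r^{-2}\int u^2\,\ld(\dd u)$), whence $\ld_r$ has infinite total mass. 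A single atom of $\ld_r$ at $c>0$ would then give $\int u^2\,\ld_r(\dd u)=c^2\cdot\infty=\infty$, contradicting $\kappa_{2,X_r}<\infty$. For $r=\infty$, the hypothesis $\kappa_{4,X_r}<\infty$ plays the analogous role in ruling out an infinite-mass atom. Strict inequality thus yields a uniform lower bound on $\kappa_{2,X_r}\kappa_{4,X_r}/\kappa_{3,X_r}^2$ strictly exceeding $1$, so \eqref{e:pgn-p} holds for all sufficiently large $p$.
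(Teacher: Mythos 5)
Your proposal is correct and follows essentially the same route as the paper: substitute $s(r)$ and $m(r)$ to match the third and fourth cumulants, compute $\kappa_{2,Y_r}=\frac{(p+4)\kappa_{3,X_r}^2}{(p+3)\kappa_{4,X_r}}$, and invoke Cauchy--Schwarz (the paper calls it H\"older) to get the strict inequality $\kappa_{3,X_r}^2 < \kappa_{2,X_r}\kappa_{4,X_r}$. Your treatment of the equality case is in fact more careful than the paper's, which simply asserts strict inequality; your observation that $\ld_r$ has infinite total mass (from $\ld(\Reals)=\infty$ and $\ld([r,\infty))<\infty$), combined with $\kappa_{2,X_r}<\infty$, cleanly rules out the degenerate point-mass case.
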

\begin{proof}
  By assumption, $\kappa_{i,X_r}<\infty$ for $2\le i\le 4$.  By
  H\"older inequality, $\kappa_{3,X_r}^2<\kappa_{2,X_r} \kappa_{4,
    X_r}$, which implies \eqref{e:pgn-p}.  From \eqref{e:cum-X-Y}, by
  setting $s(r)$ and $m(r)$ as in \eqref{e:pgn-s}, $\kappa_{j,X_r} =
  \kappa_{j, Y_r}$ for $j=3, 4$ and 
  \begin{align*}
    \kappa_{2,Y_r} = \Gamma(p+3) m(r) s(r)^{p+3} = \Gamma(p+3)
    \frac{\kappa_{3,X_r}}{\Gamma(p+4) s(r)}
    = \frac{(p+4)\kappa_{3,X_r}^2}{(p+3) \kappa_{4,X_r}}.
  \end{align*}
  Then for $p\ge -1$  satisfying \eqref{e:pgn-p}, $\kappa_{2,Y_r}
  < \kappa_{2,X_r}$.  The rest of the result is then clear.
\end{proof}

\begin{prop}[Fifth-order cumulant matching] \label{prop:cum-match2}
  Let $\ld(\dd u) = \cf{u>0}u^{-a-1} \ell(u)\,\dd u$, where $a\in
  (0,2)$ and $\ell(u)$ is slowly varying at $0+$.  Let $p = p(r)$ be
  defined by the equation
  \begin{align*}
    1+\nth{p+4} =
    \frac{\kappa_{3, X_r} \kappa_{5, X_r}}{\kappa_{4,X_r}^2}.
  \end{align*}
  Then for all small $r>0$, $p>-1$ and satisfies \eqref{e:pgn-p}, and
  by setting $s(r)$, $m(r)$ and $\sigma(r)$ according to
  \eqref{e:pgn-s} and \eqref{e:pgn-v}, $\kappa_{j,X_r} = \kappa_{j,
    T_r}$ for $2\le j\le 5$. 
\end{prop}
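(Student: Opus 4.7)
The plan is to determine $p(r)$ so that it forces the fifth-order match beyond what Proposition \ref{prop:cum-match} already gives at orders $2,3,4$, and then verify that everything is well-defined and consistent with \eqref{e:pgn-p} as $r\to 0+$ by using the regular variation of $\ld$ near the origin.

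First I would derive the defining equation for $p$. Under \eqref{e:pgn-s}, using $\kappa_{j,Y_r}=\Gamma(j+p+1)m(r)s(r)^{j+p+1}$, a direct substitution gives
\begin{align*}
  \kappa_{5,Y_r}
  =\frac{\Gamma(p+6)}{\Gamma(p+4)}\,\kappa_{3,X_r}\,s(r)^{2}
  =\frac{(p+5)\kappa_{4,X_r}^{2}}{(p+4)\kappa_{3,X_r}}.
\end{align*}
Setting $\kappa_{5,Y_r}=\kappa_{5,X_r}$ rearranges to exactly the equation in the statement. Hence once $p$ solves that equation and $s(r),m(r),\sigma(r)$ are chosen via \eqref{e:pgn-s}--\eqref{e:pgn-v}, the fifth-cumulant match is automatic; the remaining task is to check that $p>-1$ and \eqref{e:pgn-p} hold for small $r$, so that Proposition \ref{prop:cum-match} can be invoked for $j=2,3,4$.

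Next I would compute the small-$r$ asymptotics of $\kappa_{j,X_r}$ via Karamata's theorem for slowly varying functions. Since $j-a>0$ for $j\ge 2$ (because $a<2$),
\begin{align*}
  \kappa_{j,X_r}=\int_{0}^{r}u^{\,j-a-1}\ell(u)\,\dd u
  \sim\frac{r^{\,j-a}\ell(r)}{j-a},\qquad r\to 0+.
\end{align*}
The slowly varying factor $\ell(r)$ cancels in every cumulant ratio, so
\begin{align*}
  \frac{\kappa_{3,X_r}\kappa_{5,X_r}}{\kappa_{4,X_r}^{2}}
  \longrightarrow\frac{(4-a)^{2}}{(3-a)(5-a)}
  =1+\frac{1}{(3-a)(5-a)-1}.
\end{align*}
Combining with the defining equation gives $p(r)+4\to(3-a)(5-a)$, which exceeds $3$ for every $a\in(0,2)$; hence $p(r)>-1$ for all sufficiently small $r$.

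Finally, to verify \eqref{e:pgn-p}, I would note that similarly
\begin{align*}
  \frac{\kappa_{2,X_r}\kappa_{4,X_r}}{\kappa_{3,X_r}^{2}}
  \longrightarrow\frac{(3-a)^{2}}{(2-a)(4-a)}
  =1+\frac{1}{(2-a)(4-a)},
\end{align*}
while $(p+4)/(p+3)\to(3-a)(5-a)/[(3-a)(5-a)-1]$. Since the elementary identity $(3-a)(5-a)-1-(2-a)(4-a)=6-2a>0$ for $a<3$, the right-hand side strictly exceeds the left-hand side in the limit, and thus \eqref{e:pgn-p} holds for all small $r$. Proposition \ref{prop:cum-match} then supplies the matching for $j=2,3,4$, and by construction the fifth cumulant matches as well.

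The only delicate step is the uniform Karamata estimate and the sign bookkeeping in the limiting inequality; everything else is algebra once the defining relation for $p$ is inverted. I do not anticipate a genuine obstacle, as the slowly varying factors cancel out of every ratio that appears.
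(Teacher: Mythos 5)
Your proof is correct and follows essentially the same route as the paper: derive the defining equation for $p(r)$ from the requirement $\kappa_{5,Y_r}=\kappa_{5,X_r}$, obtain the $r\to 0+$ asymptotics of the cumulant ratios by Karamata's theorem, and check that $p>-1$ and \eqref{e:pgn-p} hold in the limit. One arithmetic slip worth correcting: since $(4-a)^{2}-(3-a)(5-a)=1$, the displayed identity should read
\[
\frac{(4-a)^{2}}{(3-a)(5-a)}=1+\frac{1}{(3-a)(5-a)},
\]
rather than $1+\frac{1}{(3-a)(5-a)-1}$; the conclusion you then state, $p(r)+4\to(3-a)(5-a)$, is the one that follows from the corrected identity, so the subsequent comparison with \eqref{e:pgn-p} and the invocation of Proposition \ref{prop:cum-match} remain sound.
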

\begin{proof}
  Since $\ell$ is slowly varying at $0+$, for $j\ge 3$,
  \begin{align}  \label{e:svf-power}
    \kappa_{j,X_r} = \int_0^r u^{j-1-a}
    \ell(u)\,\dd u \sim \frac{r^{j-a} \ell(r)}{j-a},
  \end{align}
  as $r\to 0+$ \cite{bertoin:96, korevaar:04}.  As a result, 
  \begin{align*}
    \nth{p+4}
    = \frac{\kappa_{3,X_r}\kappa_{5,X_r}}{\kappa_{4,X_r}^2}-1
    \sim \frac{(4-a)^2}{(3-a)(5-a)} -1 = \nth{(3-a)(5-a)},
    \quad r\to 0+.
  \end{align*}
  It follows that $p\sim a^2 - 8a+11 >-1$.  Thus, for all small
  $r>0$, $p>-1$.  By Proposition \ref{prop:cum-match}, it only remains
  to show that $p$ satisfies \eqref{e:pgn-p} and $\kappa_{5,X_r} =
  \kappa_{5, Y_r}$.  By \eqref{e:svf-power}, as $r\to 0+$,
  \begin{align*}
    \frac{\kappa_{2,X_r}\kappa_{4,X_r}}{\kappa_{3,X_r}^2}
    \sim \frac{(3-a)^2}{(2-a)(4-a)} = 1+\nth{a^2 - 6 a + 8}.
  \end{align*}
  Therefore, with $p>-1$, \eqref{e:pgn-p} is equivalent to $p>a^2 -
  6a+5$, which holds for $a\in (0,2)$.  Finally, that $\kappa_{5, X_r}
  = \kappa_{5, Y_r}$ follows from $\kappa_{3,X_r} \kappa_{5,X_r}
  /\kappa_{4,X_r}^2 = (p+5)/(p+4) = \kappa_{3,Y_r} \kappa_{5,Y_r}/
  \kappa_{4,Y_r}^2$.
\end{proof}

Now we consider the symmetric case.  Suppose $X = X\Sp 1 - X\Sp 2$,
where $X\Sp i$ are i.i.d.\ with $\levy$ measure $\ld$ supported in
$\Reals_+$.  Let $X_r = X\Sp 1_r - X\Sp 2_r$, and approximate it 
by $T_r = T_r\Sp 1 - T_r\Sp 2$, where $T_r\Sp i$ are i.i.d.\ defined
in \eqref{e:pgn-r}.  Since all the odd-ordered cumulants of 
$X_r$ and $T_r$ are 0, we only need to match their even-ordered
cumulants.  The next results states that for the general case, we can
match their cumulants up to order 7, and for the i.d.\ distribution as
in Proposition \ref{prop:cum-match2}, we can match their cumulants up
to order 9.

\begin{prop}[Symmetric case] \label{prop:cum-match-symm}
  1) Fix $r>0$.  Then for all large $p$,
  \begin{align} \label{e:pgn-p-symm}
    \frac{(p+5)(p+6)}{(p+3)(p+4)} < 
    \frac{\kappa_{2,X_r} \kappa_{6,X_r}}{\kappa_{4,X_r}^2}.
  \end{align}
  For any $p\ge -1$ satisfying \eqref{e:pgn-p-symm}, if
  \begin{align} \label{e:pgn-s-symm}
    s(r) = \sqrt{
      \frac{\kappa_{6, X_r}}{(p+5)(p+6)\kappa_{4, X_r}}
    }, \quad
    m(r) = \frac{\kappa_{4, X_r}}{2\Gamma(p+5) s(r)^{p+5}},
  \end{align}
  and if $Y_r = Y\Sp 1_r - Y\Sp 2_r$, where $Y\Sp i_r$ are i.i.d.\
  as defined in \eqref{e:Gamma-r}, then $\kappa_{2,X_r} >
  \kappa_{2,Y_r}$, and by setting  $\sigma(r)$ as in \eqref{e:pgn-v},
  $\kappa_{j,X_r} = \kappa_{j,T_r}$ for $2\le j\le 7$.

  2) If the $\levy$ measure $\ld$ of $X\Sp i$ is $\cf{u>0} u^{-a-1}
  \ell(u)\,\dd u$, where $a\in (0,2)$ and $\ell(u)$ is slowly varying
  at $0+$, then for all small $r>0$, there is a unique $p=p(r)>0$
  satisfying \eqref{e:pgn-p-symm} and
  \begin{align} \label{e:pgn-p-symm2}
    \frac{(p+7)(p+8)}{(p+5)(p+6)} = 
    \frac{\kappa_{4, X_r} \kappa_{8, X_r}}{\kappa_{6,X_r}^2}.
  \end{align}
  Consequently, for this $p$, by setting $s(r)$ and $m(r)$ according
  to \eqref{e:pgn-s-symm} and $\sigma(r)$ according to
  \eqref{e:pgn-v}, $\kappa_{j,X_r} = \kappa_{j,T_r}$ for $2\le j\le
  9$.
\end{prop}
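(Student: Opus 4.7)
The plan for part 1) is to extend the argument of Proposition \ref{prop:cum-match} to the symmetric setting. Since $X_r = X\Sp 1_r - X\Sp 2_r$ and $T_r = T\Sp 1_r - T\Sp 2_r$ are differences of i.i.d.\ pairs, all odd cumulants vanish automatically, so matching needs to be done only at the even orders $j=2,4,6$. Using $\kappa_{2k,X_r}=2\int_0^r u^{2k}\,\ld(\dd u)$ and $\kappa_{2k,Y_r}=2\Gamma(2k+p+1)m(r)s(r)^{2k+p+1}$, imposing $\kappa_4$- and $\kappa_6$-matching gives a $2\times 2$ system whose unique solution is \eqref{e:pgn-s-symm}. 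Back-substitution yields
\[
\kappa_{2,Y_r}=\frac{(p+5)(p+6)}{(p+3)(p+4)}\cdot\frac{\kappa_{4,X_r}^2}{\kappa_{6,X_r}},
\]
so $\kappa_{2,X_r}>\kappa_{2,Y_r}$ (needed to set $\sigma(r)>0$) is precisely \eqref{e:pgn-p-symm}. That this inequality holds for all large $p$ follows from the strict Cauchy-Schwarz bound $(\int u^4\,\ld_r)^2<(\int u^2\,\ld_r)(\int u^6\,\ld_r)$, which makes the RHS of \eqref{e:pgn-p-symm} strictly greater than $1$, while its LHS tends to $1$ as $p\to\infty$. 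With $\sigma(r)$ given by \eqref{e:pgn-v}, matching at $j=2,4,6$ together with symmetry extends to $j\le 7$.

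For part 2), I apply Karamata's theorem to the slowly varying $\ell$: for $j\ge 3$, $\kappa_{j,X_r}\sim 2r^{j-a}\ell(r)/(j-a)$ as $r\to 0+$, so the RHS of \eqref{e:pgn-p-symm2} tends to $(6-a)^2/[(4-a)(8-a)]$, a quantity lying in $(9/8,4/3)$ for $a\in(0,2)$. Its LHS $F(p):=(p+7)(p+8)/[(p+5)(p+6)]$ is, by an elementary derivative check, strictly decreasing on $[-1,\infty)$ with range $(1,21/10]$, so for small $r$ there is a unique $p=p(r)>-1$ solving \eqref{e:pgn-p-symm2}. Since $F(0)=56/30$ already exceeds $4/3$, the limiting root $p_0$ is strictly positive, hence $p(r)>0$ for small $r$. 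Order-$8$ matching then follows by direct substitution: from \eqref{e:pgn-s-symm} one computes $\kappa_{8,Y_r}=F(p)\cdot\kappa_{6,X_r}^2/\kappa_{4,X_r}$, so the identity $\kappa_{8,X_r}=\kappa_{8,Y_r}$ is precisely \eqref{e:pgn-p-symm2}; symmetry extends the matching to $j=9$.

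The remaining point is to verify that this $p=p(r)$ also satisfies \eqref{e:pgn-p-symm}. By Karamata, the RHS of \eqref{e:pgn-p-symm} tends to $(4-a)^2/[(2-a)(6-a)]$, while its LHS at $p=p(r)$ tends to a rational function of $a$ determined implicitly by $F(p_0)=(6-a)^2/[(4-a)(8-a)]$. The required strict inequality reduces to a rational inequality in $a\in(0,2)$, which I would clear to a low-degree polynomial manifestly positive on $(0,2)$; for small $r$ the inequality then follows by continuity.

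The main obstacle is precisely this final compatibility check. Plain Cauchy-Schwarz yields $\kappa_2\kappa_6/\kappa_4^2>1$ and $\kappa_4\kappa_8/\kappa_6^2>1$ separately, but gives no quantitative control on the spacing between them, so one cannot conclude \eqref{e:pgn-p-symm} from \eqref{e:pgn-p-symm2} abstractly; the explicit Karamata asymptotics are needed to pin down both moment ratios simultaneously. Every other step is essentially bookkeeping from the Lévy-Khintchine representations of $X_r$ and $T_r$.
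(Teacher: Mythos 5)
Your part 1) and the uniqueness-of-$p$ argument in part 2) are essentially the paper's own proof: the Cauchy--Schwarz (H\"older) bound $\kappa_{4,X_r}^2<\kappa_{2,X_r}\kappa_{6,X_r}$, the formula $\kappa_{2,Y_r}=\frac{(p+5)(p+6)}{(p+3)(p+4)}\kappa_{4,X_r}^2/\kappa_{6,X_r}$, the Karamata limit $\kappa_{4,X_r}\kappa_{8,X_r}/\kappa_{6,X_r}^2\to(6-a)^2/[(4-a)(8-a)]$, and the monotonicity of $F(p)=(p+7)(p+8)/[(p+5)(p+6)]$ with $F(0)>4/3\ge h(a)$ all appear, in the same form, in the paper.

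The gap is the step you yourself flag as ``the main obstacle'': you do not actually carry out the verification that the $p=p(r)$ solving \eqref{e:pgn-p-symm2} also satisfies \eqref{e:pgn-p-symm} for small $r$. Moreover, the strategy you sketch is not quite workable as described. Solving $F(p_0)=(6-a)^2/[(4-a)(8-a)]$ for $p_0$ gives the root of a quadratic in $p$ whose coefficients are polynomial in $a$, so $p_0$ is an algebraic (square-root) function of $a$, not a rational one; substituting it into the left side of \eqref{e:pgn-p-symm} does not directly produce ``a rational inequality in $a$.'' The paper sidesteps this by never solving for $p$. Clearing denominators, the limiting equality \eqref{e:pgn-p-symm2} is $2p^2=2p(a^2-12a+21)+13a^2-156a+356$, and the limiting target inequality \eqref{e:pgn-p-symm} is $2p^2>2p(a^2-8a+5)+9a^2-72a+84$. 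Subtracting the equality from the desired inequality reduces the claim to
\begin{align*}
  8p(4-a)+4(a^2-21a+68)>0,
\end{align*}
which is immediate from $p>0$ and $0<a<2$ (the quadratic $a^2-21a+68$ is positive on $[0,2]$). That bilinear manipulation, not an explicit solve-and-substitute, is what closes the argument cleanly; without it, or some equivalent computation actually carried through, your proposal does not establish \eqref{e:pgn-p-symm} for the chosen $p(r)$.
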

\begin{proof}
  1)  By H\"older inequality, $\kappa_{4, X_r}^2 < \kappa_{2, X_r}
  \kappa_{6, X_r}$, so for all large $p$, \eqref{e:pgn-p-symm} is
  satisfied.  Since for even-valued $j$, $\kappa_{j,Y_r} = 2
  \kappa_{j, Y_r\Sp 1} = 2 \Gamma(j+p+1) m(r)s(r)^{j+p+1}$, it is easy
  to see $\kappa_{4,X_r} = \kappa_{4,Y_r}$ and $\kappa_{6, X_r} =
  \kappa_{6, Y_r}$.  On the other hand, for all odd-valued $j$,
  $\kappa_{j, X_r} = \kappa_{j, Y_r}=0$.   Finally, by similar
  argument for Proposition \ref{prop:cum-match}, $\kappa_{2,Y_r} <
  \kappa_{2, X_r}$, leading to $\kappa_{j, X_r} = \kappa_{j, T_r}$ for
  $2\le j\le 7$.

  2) Following the proof of Proposition \ref{prop:cum-match2},
  \begin{align*}
    \frac{\kappa_{4, X_r} \kappa_{8, X_r}}{\kappa_{6,X_r}^2}
    \sim \frac{(6-a)^2}{(4-a)(8-a)} = 1+\frac{4}{(4-a)(8-a)}:= h(a),
  \quad r\to 0+.
  \end{align*}
  Clearly, $h(a)$ is strictly increasing on $(0,2)$.
  On the other hand,
  \begin{align*}
    g(p):=\frac{(p+7)(p+8)}{(p+5)(p+6)} = \Grp{1+\frac{2}{p+5}}
    \Grp{1+\frac{2}{p+6}}
  \end{align*}
  is strictly decreasing on $(-1,\infty)$, with $g(0) > h(2) > h(a) >
  h(0) > 1=g(\infty)$.  Therefore, there is a unique $p>0$ satisfying
  \eqref{e:pgn-p-symm2}.  We have to show that for this $p = p(r)$,
  \eqref{e:pgn-p-symm} is satisfied for all small $r>0$.  By
  continuity, it suffices to show that for $p>0$,
  \begin{align*}
    \frac{(p+7)(p+8)}{(p+5)(p+6)} = \frac{(6-a)^2}{(4-a)(8-a)}
    \implies
    \frac{(p+5)(p+6)}{(p+3)(p+4)} < \frac{(4-a)^2}{(2-a)(6-a)}.
  \end{align*}
  By calculation, the equality is equivalent to $2p^2 = 2p(a^2
  - 12 a + 21) + 13 a^2 - 156 a + 356$, while the inequality is
  equivalent to $2p^2 > 2p(a^2 - 8 a + 5) + 9a^2 - 72 a + 84$.  Then,
  by $p>0$ and $0<a<2$, the equality indeed implies the inequality.
  The rest of the proof then follows the one for 1). 
\end{proof}

Propositions \ref{prop:cum-match2} and \ref{prop:cum-match-symm}
directly lead to the following result on the truncated stable case.
Note that for the non-truncated case, simple exact sampling method is
known \cite{devroye:86b}.  Also, for $a\in (0,1)$, the truncated
stable distribution can be sampled exactly \cite{chi:12}.  

\begin{cor} \label{cor:cum-match2}
  Let $\ld(\dd u) = c\cf{0<u<r_0}u^{-a-1}\,\dd u$, where $c>0$,
  $r_0\in (0,\infty)$, and $a\in (0,2)$.

  1) Suppose $X$ has $\levy$ measure $\ld$.  If $p = a^2 - 8 a + 11$,
  then $p>-1$ and for all $0<r\le r_0$, by setting $s(r)$ and $m(r)$
  according to \eqref{e:pgn-s}, $\kappa_{2, X_r}>\kappa_{2,Y_r}$, and
  by setting $\sigma(r)$ according to \eqref{e:pgn-v}, $\kappa_{j,X_r}
  = \kappa_{j, T_r}$ for $2\le j\le 5$.

  2) Suppose $X=X\Sp 1 - X\Sp 2$, with $X\Sp i$ being i.i.d.\ with
  $\levy$ measure $\ld$.  If $p$ is the unique solution in
  $(0,\infty)$ to 
  \begin{align*}
    \frac{(p+7)(p+8)}{(p+5)(p+6)} = \frac{(6-a)^2}{(4-a)(8-a)},
  \end{align*}
  then for all $0<r\le r_0$, by setting $s(r)$, $m(r)$ according to
  \eqref{e:pgn-s-symm}, $\kappa_{2, X_r}>\kappa_{2,Y_r}$, and by
  setting $\sigma(r)$ according to \eqref{e:pgn-v}, $\kappa_{j,X_r} =
  \kappa_{j, T_r}$ for $2\le j\le 9$.
\end{cor}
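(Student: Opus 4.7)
The plan is to observe that Propositions \ref{prop:cum-match2} and \ref{prop:cum-match-symm} were stated asymptotically only because the slowly varying factor $\ell(u)$ there could be arbitrary; in the truncated stable case $\ell\equiv c$ is literally constant, so \eqref{e:svf-power} sharpens from an asymptotic equivalence into the exact identity $\kappa_{j,X_r} = cr^{j-a}/(j-a)$ for every $j\ge 3$ and every $0<r\le r_0$. In particular, every ratio of the form $\kappa_{i,X_r}\kappa_{j,X_r}/(\kappa_{k,X_r}\kappa_{l,X_r})$ with $i+j=k+l$ becomes a constant independent of $r$. Consequently all the algebraic inequalities that drove the parent propositions hold for \emph{every} $r\le r_0$, not just for small $r$, and the asserted cumulant matching can be read off at once.

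For part 1, I would compute
\[
  \frac{\kappa_{3,X_r}\kappa_{5,X_r}}{\kappa_{4,X_r}^2}
  = \frac{(4-a)^2}{(3-a)(5-a)}
  = 1 + \frac{1}{(3-a)(5-a)},
\]
and set this equal to $1+1/(p+4)$ to obtain $p+4=(3-a)(5-a)$, i.e., $p=a^2-8a+11$. Since $p(a)$ is strictly decreasing on $(0,2)$ with $p(2)=-1$, one has $p>-1$ throughout $(0,2)$. To verify \eqref{e:pgn-p}, I would substitute $b:=3-a\in(1,3)$; after clearing denominators the desired inequality reduces to $(b+2)(b^2-1) < b(b^2+2b-1)$, i.e., $-2<0$, which is immediate. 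Proposition \ref{prop:cum-match} then delivers the match for $j=2,3,4$, and the fifth-order match follows because the definition of $p$ forces $\kappa_{3,X_r}\kappa_{5,X_r}/\kappa_{4,X_r}^2 = (p+5)/(p+4) = \kappa_{3,Y_r}\kappa_{5,Y_r}/\kappa_{4,Y_r}^2$; combined with $\kappa_{3,X_r}=\kappa_{3,Y_r}$ and $\kappa_{4,X_r}=\kappa_{4,Y_r}$ this yields $\kappa_{5,X_r}=\kappa_{5,Y_r}=\kappa_{5,T_r}$, the last equality because the Gaussian summand in $T_r$ contributes only to the second cumulant.

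Part 2 follows the same template using Proposition \ref{prop:cum-match-symm}. In the symmetric case the even cumulants are $\kappa_{j,X_r}=2cr^{j-a}/(j-a)$ and the odd ones vanish, so $\kappa_{4,X_r}\kappa_{8,X_r}/\kappa_{6,X_r}^2 = (6-a)^2/[(4-a)(8-a)]$ is a constant in $r$, and the unique $p>0$ solving \eqref{e:pgn-p-symm2} is determined exactly, uniformly in $r$. The implication from \eqref{e:pgn-p-symm2} to \eqref{e:pgn-p-symm} was already established by pure polynomial algebra inside the proof of Proposition \ref{prop:cum-match-symm}(2), so it transfers verbatim. Applying that proposition for each $r\le r_0$ completes the argument.

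No real obstacle stands in the way, as every computation is elementary. The only subtleties worth flagging are that $p>-1$ becomes tight at $a=2$ in part 1 (so the open endpoint $a<2$ is essential) and that \eqref{e:pgn-p} must be a \emph{strict} inequality rather than the weak H\"older inequality; both are addressed by the direct polynomial manipulations above. The substance of the corollary is the qualitative upgrade: in the truncated stable setting the ``for all small $r$'' conclusions of the parent propositions become ``for all $0<r\le r_0$''.
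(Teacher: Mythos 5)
Your argument is correct and is exactly the route the paper intends: the corollary is stated with no explicit proof, appearing immediately after the remark that Propositions \ref{prop:cum-match2} and \ref{prop:cum-match-symm} ``directly lead to'' it, and the upgrade from ``for all small $r$'' to ``for all $0<r\le r_0$'' is precisely the observation that when $\ell\equiv c$ the asymptotic relations in \eqref{e:svf-power} become exact identities $\kappa_{j,X_r}=cr^{j-a}/(j-a)$, so the cumulant ratios are literally constant in $r$ and the polynomial inequalities from the parent propositions hold uniformly.
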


\subsection{Error bound for approximation} \label{ss:error-1d}
We consider the error of approximation of $X$ by $\Delta_r + T_r$. 
Denote constants
\begin{align*}
  C_1
  &=
  \sin 1 = 0.841\ldots,
  \\
  C_2
  &=
  \inf_{p>0} \nth{\Gamma(p)} \int_0^p u^{p-1} e^{-u}\,\dd u
  = \inf_{p>0} \pr\{\xi_p \le p\}, \quad \xi_p\sim \dgamma(p,1). 
\end{align*}
Note that $C_2\in (0,1)$ because
\begin{align*}
  \nth{\Gamma(p)} \int_0^p u^{p-1} e^{-u}\,\dd u
  \sim  \nth{\Gamma(p)} \int_0^p u^{p-1} \,\dd u \sim p^p \to 1, \quad
  p\to 0+
\end{align*}
and by Central Limit Theorem, $\pr\{\xi_p - p \le 0\} \to 1/2$ as
$p\toi$.  

Observe that for $s(r)$ defined in \eqref{e:pgn-s} or
\eqref{e:pgn-s-symm}, $s(r)< r/(p+3)$.  The main result of the
section is the following.
\begin{theorem} \label{thm:tv}
  Fix $r\in (0,\infty)$.  Let $T_r$ be defined by \eqref{e:Gamma-r}
  -- \eqref{e:pgn-r} for the asymmetric case, or by $T_r\Sp 1 - T_r\Sp
  2$ for the symmetric case, with $T_r\Sp i$ i.i.d.\ defined by
  \eqref{e:Gamma-r} -- \eqref{e:pgn-r}.  Suppose $s(r) < r/(p+3)$
  and $\sigma(r)>0$.  Let
  \begin{align*}
    L(t,r)
    &=
    \frac{t^2}{2} \min\{C_1^2 \kappa_{2,X_{1/|t|}},
      \ \sigma(r)^2\}.
  \end{align*}
  For $j\ge 1$, define $Q_j(r)\ge 0$ such that
  \begin{align*}
    Q_j(r)^2 = \frac{\Gamma(j+1/2)}{2 (C_1^2 C_2)^{j+1/2}}
    + \kappa_{2,X_r}^{j+1/2}
    \int_{1/r}^\infty t^{2j} e^{-2 L(t,r)}\,\dd t.
  \end{align*}
  Given $q\ge 5$, suppose $\kappa_{j,X_r} = \kappa_{j, T_r}$
  for $2\le j<q$.  Then
  \begin{align}
    \dtv(X, \Delta_r+T_r)\le 
    \frac{|\kappa|_{q,X_r} + |\kappa|_{q,Y_r}}{q!\kappa_{2,X_r}^{q/2}}
    [q Q_{q-1}(r) + Q_q(r) + Q_{q+1}(r)].
    \label{e:pgn-dtv}
  \end{align}
\end{theorem}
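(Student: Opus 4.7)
The plan is to reduce $\dtv(X, \Delta_r + T_r)$ to $\dtv(X_r, T_r)$ using the convolution property of total variation (with $\Delta_r$ independent of both $X_r$ and $T_r$), then bound this via the $L^1$-norm of the density difference $g = f_{X_r} - f_{T_r}$, whose Fourier transform is $D(t) = \chf_{X_r}(t) - \chf_{T_r}(t)$. Both $X_r$ and $T_r$ admit smooth densities with integrable characteristic functions: for $T_r$ this follows from the Gaussian component with $\sigma(r) > 0$, and for $X_r$ from the real-part decay established below. Hence $\dtv(X, \Delta_r + T_r) \le \dtv(X_r, T_r) = \tfrac12 \|g\|_1$.

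The pointwise bound on $D$ uses the interpolation identity $e^{-a} - e^{-b} = (b - a)\int_0^1 e^{-((1-s)a + sb)}\,\dd s$ with $a = \che_{X_r}(t)$, $b = \che_{T_r}(t)$. Cumulant matching through order $q-1$ combined with the Taylor remainder $|e^{\iunit tu} - \sum_{j < q}(\iunit tu)^j/j!| \le |tu|^q/q!$, integrated against $\ld_r$ and $\cpg_r$, yields
\[
  |\che_{T_r}(t) - \che_{X_r}(t)| \le \frac{|t|^q}{q!}\Grp{|\kappa|_{q, X_r} + |\kappa|_{q, Y_r}}.
\]
The elementary inequality $1 - \cos\theta \ge C_1^2\theta^2/2$ on $|\theta| \le 1$, applied to the L\'evy mass on $\{|u| < 1/|t|\}$, gives $\Re\che_{X_r}(t) \ge \frac{C_1^2 t^2}{2}\kappa_{2, X_{1/|t|}}$, and the Gaussian part gives $\Re\che_{T_r}(t) \ge \sigma(r)^2 t^2/2$, so the modulus of the interpolation integrand is at most $e^{-L(t, r)}$. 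Combining,
\[
  |D(t)| \le \frac{|t|^q(|\kappa|_{q, X_r} + |\kappa|_{q, Y_r})}{q!}\,e^{-L(t, r)}.
\]

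To convert this into an $L^1$-bound on $g$, I would apply a weighted Cauchy--Schwarz $\|g\|_1 \le \|wg\|_2\,\|w^{-1}\|_2$ followed by Plancherel, after rescaling by $\sigma_r = \sqrt{\kappa_{2, X_r}}$. The weight $w$ is designed so that $\|wg\|_2^2$ decomposes into three pieces proportional to $\int t^{2j}|D(t)|^2\,\dd t$ for $j = q - 1, q, q + 1$; the combinatorial coefficient $q$ in front of $Q_{q-1}(r)$ arises from differentiating the weight. Substituting the pointwise bound on $D$, each integral is split at $|t| = 1/r$: for $|t| \ge 1/r$ it is retained as the second term of $Q_j(r)^2$; for $|t| \le 1/r$ the stronger real-part estimate described next supplies the first term $\Gamma(j + 1/2)/(2(C_1^2 C_2)^{j + 1/2})$.

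The stronger estimate on $|t| \le 1/r$ exploits the Gamma-structure of $\cpg_r$ through the identity
\[
  \int_0^{1/|t|} u^2\,\cpg_r(\dd u) = \kappa_{2, Y_r}\,\pr\Cbr{\xi_{p+3} \le 1/(|t|\,s(r))}.
\]
The hypothesis $s(r) < r/(p + 3)$ forces $1/(|t|\,s(r)) \ge p + 3$ on this range, and the definition of $C_2$ then gives $\pr\{\xi_{p+3} \le p + 3\} \ge C_2$. Hence $\Re\che_{Y_r}(t) \ge \tfrac{C_1^2 C_2}{2}\kappa_{2, Y_r}\,t^2$; adding the Gaussian part and using $\kappa_{2, Y_r} + \sigma(r)^2 = \kappa_{2, X_r}$ together with $C_1^2 C_2 < 1$ gives $\Re\che_{T_r}(t) \ge \tfrac{C_1^2 C_2}{2}\kappa_{2, X_r}\,t^2$. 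Combined with the trivial $\kappa_{2, X_{1/|t|}} \ge \kappa_{2, X_r}$ on the same range, $2L(t, r) \ge C_1^2 C_2 \kappa_{2, X_r}\,t^2$, and the Gaussian integral evaluates to $\Gamma(j + 1/2)/(2(C_1^2 C_2 \kappa_{2, X_r})^{j + 1/2})$, which times the prefactor $\kappa_{2, X_r}^{j + 1/2}$ is exactly the first term of $Q_j(r)^2$. Assembling the three pieces gives \eqref{e:pgn-dtv}. The main obstacle is isolating the precise weighted $L^1$--$L^2$ inequality that produces the combination $q Q_{q-1} + Q_q + Q_{q+1}$ with correct scaling in $\sigma_r$; a secondary subtlety is that $\sigma(r)^2$ is not universally comparable to $\kappa_{2, X_r}$, so the $C_2$ factor genuinely arises from the Gamma-L\'evy density structure together with the technical inequality $s(r) < r/(p + 3)$.
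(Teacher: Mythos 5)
Your proposal is correct in its essential ideas but takes a genuinely different route from the paper. The paper's proof is a Lindeberg-type telescoping argument: it mollifies $X_r$ and $T_r$ by an independent $\rx Z$, writes each as a sum of $n$ i.i.d.\ infinitely divisible pieces with characteristic exponents scaled by $1/n$, replaces pieces one at a time, Taylor-expands the mollified test function $g_k(x)=\mean h(W_k+x)$, uses the moment matching to cancel the low-order terms, and controls $\|g_k^{(q)}\|_\infty$ through an $L^1$-bound on $f_{W_k/\nu}^{(q)}$ (Lemmas~\ref{lemma:L1} and~\ref{lemma:L1-deriv}); the $|\kappa|_{q,\cdot}$ factors are then produced in the $n\to\infty$ limit via Lemma~3.1 of \cite{asmussen:01}. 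You instead compare the characteristic functions directly: $D=\chf_{X_r}-\chf_{T_r}=-\Phi\int_0^1 e^{-\che_s}\,\dd s$ with $\Phi=\che_{T_r}-\che_{X_r}$ and $\che_s=(1-s)\che_{X_r}+s\che_{T_r}$, you bound $\Phi$ by the Taylor remainder of $1+z-e^z$, and you pass to $\|f_{X_r}-f_{T_r}\|_1$ by a $(1+x^2)$-weighted Cauchy--Schwarz plus Plancherel after rescaling by $\nu=\kappa_{2,X_r}^{1/2}$. Both routes rest on the same real-part estimates (including the constant $C_2$ from the Gamma tail and $s(r)<r/(p+3)$). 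Your direct route is shorter, avoids the $n\to\infty$ limit and the $\rx\to0$ limit entirely, and in fact yields an extra factor of $\tfrac12$, since $\dtv(X_r,T_r)=\tfrac12\|f_{X_r}-f_{T_r}\|_1$ whereas the Lindeberg route bounds $\sup_A$ without exploiting this.

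Three places need tightening. First, the coefficient $q$ in front of $Q_{q-1}$ does \emph{not} come from ``differentiating the weight.'' After Plancherel converts $\|x\,\tilde g\|_2$ to $\nu^{-1}\|D'\|_2$, one has $D'=-\Phi'\int_0^1 e^{-\che_s}\,\dd s+\Phi\int_0^1\che_s'\,e^{-\che_s}\,\dd s$; the $q$ is produced by $|\Phi'(t)|\le\frac{q|t|^{q-1}}{q!}(|\kappa|_{q,X_r}+|\kappa|_{q,Y_r})$ (differentiate the Taylor remainder), and the $Q_{q+1}$ term comes from $|\che_s'(t)|\le\kappa_{2,X_r}|t|$ together with $|\Phi(t)|\le\frac{|t|^q}{q!}(|\kappa|_{q,X_r}+|\kappa|_{q,Y_r})$. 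Second, the assertion ``$2L(t,r)\ge C_1^2C_2\kappa_{2,X_r}t^2$ on $|t|\le1/r$'' is false as stated: $L$ contains a $\min$ with $\sigma(r)^2$, and $\sigma(r)^2$ need not dominate $C_1^2C_2\kappa_{2,X_r}$. What you actually need, and in fact have proven, is the direct lower bound $(1-s)\Re\che_{X_r}(t)+s\Re\che_{T_r}(t)\ge\tfrac12 C_1^2C_2\kappa_{2,X_r}t^2$ for $|t|\le1/r$ (from $\Re\che_{X_r}\ge\tfrac12 C_1^2\kappa_{2,X_r}t^2$ and your $\Re\che_{T_r}\ge\tfrac12 C_1^2C_2\kappa_{2,X_r}t^2$); that, not $L$ itself, is what produces the Gamma-function term in $Q_j(r)^2$. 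Third, the existence of a density for $X_r$ with $|t|^q\chf_{X_r}\in L^1$ is not automatic from the real-part decay alone (cf.\ Example~\ref{ex:ln}); you should assume \eqref{e:exp-L} without loss of generality, as the paper does, since otherwise $Q_{q+1}(r)=\infty$ and the bound is vacuous.
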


\noindent{\it Remark.}
\begin{enumerate}[itemsep=0ex, topsep=.5ex] 
\item The bound is on $\dtv$ instead of the more commonly used $\dks$
  \cite{asmussen:01, lorz:91:stat}.  However, we have not been able to
  derive a Berry-Esseen type of bound of the form $C(|\kappa|_{q,X_r}
  + |\kappa|_{q,Y_r})/\kappa_{2,X_r}^{q/2}$, with $C$ a universal
  constant only depending on $q$.  It appears that some key
  ingredients for the proof of the Berry-Esseen bound for normal
  approximation are still missing for higher order approximations.
  Also, it is likely that the constants in the bounds are not optimal.

\item The bound will be proved by combining Fourier analysis, the
  Lindeberg method, and a device in \cite{asmussen:01} (cf.\ the
  proof of Theorem 25.18 in \cite{sato:99}).  Although a bound on
  $\dks$ may be established solely based on Fourier analysis
  \cite{chow:teicher:97, lorz:91:stat}, our proof seems to be more
  transparent and suitable for generalization to multivariate
  cases.
\end{enumerate}

In the bound for $\dtv(X, \Delta_r+T_r)$, $Q_j(r)$ look rather
technical.  We can use the following result to bound them.
\begin{prop} \label{prop:exp-L}
  For $b\in (0,1)$ and $q\ge 3$, there is $M=M(b,q)>0$, such that if
  \begin{align} \label{e:X-Y-moment}
    \limsup_{r\to 0+} \frac{\kappa_{2,Y_r}}{\kappa_{2,X_r}}<b,
    \quad
    \liminf_{r\to 0+} \frac{\kappa_{2,X_r}}{r^2\ln (1/r)}> M,
  \end{align}
  then for any $2\le j\le q+1$,
  \begin{align*}
    Q_j(r)^2 = \frac{\Gamma(j+1/2)}{2 (C_1^2 C_2)^{j+1/2}} + o(1),
    \quad
    r\to 0+.
  \end{align*}
\end{prop}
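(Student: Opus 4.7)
The constant $\Gamma(j+1/2)/[2(C_1^2 C_2)^{j+1/2}]$ already appears in $Q_j(r)^2$, so the proof reduces to showing that the residual
\[
I_j(r) := \kappa_{2,X_r}^{j+1/2}\int_{1/r}^\infty t^{2j} e^{-2L(t,r)}\,\dd t = o(1),\quad r\to 0+,
\]
holds uniformly for $2\le j\le q+1$, once $M=M(b,q)$ is chosen large enough. The plan is to decouple the two terms in the minimum defining $L(t,r)$ and then to split the integral into a polynomial piece and a Gaussian piece.

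From \eqref{e:pgn-v} and the first half of \eqref{e:X-Y-moment}, $\sigma(r)^2 \ge (1-b)\kappa_{2,X_r}$ for all sufficiently small $r$. Applying the second half of \eqref{e:X-Y-moment} at the scale $s=1/t\le r$ yields $\kappa_{2,X_{1/t}} \ge M\ln t /t^2$. Therefore
\[
2L(t,r) \ge \min\{C_1^2 M\ln t,\ t^2 \sigma(r)^2\}\quad\text{for } t\ge 1/r.
\]
Using $e^{-\min\{a,b\}} = \max\{e^{-a},e^{-b}\} \le e^{-a}+e^{-b}$, I would bound $I_j(r) \le A_j(r) + B_j(r)$, where
\[
A_j(r) = \kappa_{2,X_r}^{j+1/2}\int_{1/r}^\infty t^{2j - C_1^2 M}\,\dd t,\qquad B_j(r) = \kappa_{2,X_r}^{j+1/2}\int_{1/r}^\infty t^{2j} e^{-t^2 \sigma(r)^2}\,\dd t.
\]
Choosing $M=M(b,q)$ with $C_1^2 M > 2q+3$ ensures $2j - C_1^2 M < -1$ for every $2\le j\le q+1$, giving $A_j(r) = \kappa_{2,X_r}^{j+1/2}\, r^{C_1^2 M - 2j - 1}/(C_1^2 M - 2j - 1)$. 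Since $\kappa_{2,X_r}$ is uniformly bounded for $r$ in any compact interval (it is monotone in $r$) and the exponent $C_1^2 M - 2j - 1$ is positive, $A_j(r) = o(1)$. For $B_j(r)$, the substitution $v = t\sigma(r)$ combined with $\sigma(r)^{2j+1} \ge (1-b)^{j+1/2}\kappa_{2,X_r}^{j+1/2}$ gives
\[
B_j(r) \le (1-b)^{-(j+1/2)}\int_{\sigma(r)/r}^\infty v^{2j} e^{-v^2}\,\dd v,
\]
and the lower bound $\sigma(r)/r \ge \sqrt{(1-b)M\ln(1/r)} \to \infty$ makes the Gaussian tail $o(1)$.

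The step that calls for care is calibrating $M$: it has to be large enough (depending on $q$) that the tail $t^{2j - C_1^2 M}$ is integrable and its $r$-power outruns the prefactor $\kappa_{2,X_r}^{j+1/2}$, while the hypothesis $b<1$ plays a dual role, producing both $\sigma(r)^2 \ge (1-b)\kappa_{2,X_r}$ (for the prefactor of $B_j(r)$) and $\sigma(r)/r \to \infty$ (for its lower integration limit). The logarithmic lower bound on $\kappa_{2,X_r}$ in \eqref{e:X-Y-moment} is the mechanism that converts $e^{-C_1^2 t^2 \kappa_{2,X_{1/t}}}$ into the polynomial tail $t^{-C_1^2 M}$ with an adjustable decay rate; this is the crux of the argument, and the rest amounts to bookkeeping.
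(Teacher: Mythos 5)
Your proposal is correct. The key facts you use match the paper's: $\sigma(r)^2 \ge (1-b)\kappa_{2,X_r}$ from the first half of \eqref{e:X-Y-moment}, and the logarithmic lower bound on $\kappa_{2,X_r}$ from the second half, with $M$ calibrated so that the exponential bound $e^{-2L(t,r)}$ decays polynomially fast enough. The route you take through the minimum is different, though. You keep the two branches of $\min\{C_1^2\kappa_{2,X_{1/t}},\sigma(r)^2\}$ separate, bound $e^{-\min\{a,b\}}\le e^{-a}+e^{-b}$, and control a polynomial-tail piece $A_j(r)$ and a Gaussian-tail piece $B_j(r)$ individually. The paper instead collapses the minimum in one step: by the monotonicity of $\kappa_{2,X_r}$ in $r$, for $t\ge 1/r$ one has $\kappa_{2,X_{1/t}}\le\kappa_{2,X_r}\le\sigma(r)^2/(1-b)$, so both branches of the min are comparable to $\kappa_{2,X_{1/t}}$, giving $L(t,r)\ge c\, t^2\kappa_{2,X_{1/t}}$ with $c=c(b)>0$; a single application of the logarithmic bound then turns the whole integrand into a power $t^{2j-2Mc}$ and the integral is $O(r)$. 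Your split costs one extra Gaussian tail estimate and the change of variable $v=t\sigma(r)$, but avoids invoking monotonicity of $\kappa_{2,X_r}$ and makes the contribution of the hypothesis $b<1$ (via $\sigma(r)/r\to\infty$) a little more explicit. Both are valid; the paper's version is shorter and keeps the argument to a single integral.

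One small imprecision in your write-up: you justify $A_j(r)=o(1)$ by saying $\kappa_{2,X_r}$ is ``uniformly bounded on compact intervals,'' which is true but understates the situation — in fact $\kappa_{2,X_r}\to 0$ as $r\to 0+$ (it is an integral of $u^2$ over $\{u<r\}$ against a L\'evy measure), and this is what the paper ultimately uses to absorb the $\kappa_{2,X_r}^{j+1/2}$ prefactor. Your choice $C_1^2 M>2q+3$ is tight enough given the extra help from the vanishing prefactor, but be aware the bound you actually need is just $o(1)$ of the product, not of each factor separately.
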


Since the proof is short, we give it here.  By \eqref{e:X-Y-moment},
for all small $r>0$, $\sigma(r)^2 = \kappa_{2,X_r} -\kappa_{2,Y_r} >
(1-b)\kappa_{2, X_r}$.  Then, from the increasing monotonicity of
$\kappa_{2,X_r}$ in $r$, there is a constant $c=c(b)>0$, such that for
$t\ge 1/r$, $L(t,r)\ge ct^2 \kappa_{2,X_{1/t}}$.  Consequently, if
$M\ge(q+2)/c$, then by \eqref{e:X-Y-moment}, for $t\ge 1/r$, $L(t,r)
\ge M c \ln t\ge (q+2)\ln t$, and hence for all $2\le j\le q+1$,
\begin{align*}
  \int_{1/r}^\infty t^{2j} e^{-2L(t,r)}\,\dd t
  \le \int_{1/r}^\infty t^{2(q+1)-2M c}\,\dd t = o(1),
  \quad r\to 0+.
\end{align*}
Since $\kappa_{2, X_r}=o(1)$ as $r\to 0+$, the proof is complete.

\begin{example} \rm \label{ex:stable}
  Let $\ld(\dd u) = c \cf{0<u<r_0} u^{-a-1}\,\dd u$, where $c>0$,
  $0<r_0<\infty$, and $a\in (0,2)$.  By Corollary
  \ref{cor:cum-match2}, given $r\in (0, r_0)$, if $p= a^2 - 8 a + 11$,
  and $s(r)$, $m(r)$ and $\sigma(r)$ are set according \eqref{e:pgn-s}
  -- \eqref{e:pgn-v}, then $\kappa_{j,X_r}=\kappa_{j,T_r}$ for $2\le j
  < q=6$.  To apply \eqref{e:pgn-dtv}, we need to get
  $\kappa_{2,X_r}$, $|\kappa|_{6,X_r}=\kappa_{6,X_r}$, and
  $|\kappa|_{6,Y_r}=\kappa_{6,Y_r}$.  For $j\ge 2$, $\kappa_{j,X_r} =
  c r^{j-a}/(j-a)$.  Since
  \begin{align*}
    s(r)
    &=
    \frac{\kappa_{4,X_r}}{(p+4)\kappa_{3,X_r}}=
    \frac{(3-a) r}{(p+4)(4-a)} = \frac{r}{(4-a)(5-a)},
    \\
    \kappa_{2,Y_r}
    &= \frac{\kappa_{3,Y_r}}{(p+3) s(r)}
    = \frac{\kappa_{3,X_r}}{(p+3) s(r)}
    = \frac{c(4-a)(5-a) r^{2-a}}{(3-a)(a^2 - 8a+14)},
  \end{align*}
  then $\kappa_{6,Y_r}=(6+p) s(r) \kappa_{5,Y_r} = (6+p)
  s(r)\kappa_{5,X_r}= c A(a) r^{6-a}$, with
  \begin{align*}
    A(a)
    =
    \frac{a^2 - 8a +17}{(4-a)(5-a)^2}.
  \end{align*}    
  Therefore, by Theorem \ref{thm:tv},
  \begin{align*}
    \dtv(X, \Delta_r+T_r) \le
    \frac{(2-a)^3}{c^2}
    \Sbr{\nth{6-a} + A(a)}\times
    \frac{6Q_5(r)+Q_6(r)+Q_7(r)}{6!} \times r^{2a}.
  \end{align*}

  Since $0<\kappa_{2,Y_r}/\kappa_{2,X_r}<1$ is a constant independent
  of $r$, and $\ld$ satisfies Orey's condition $\liminf_{r\to 0+}
  \kappa_{2,X_r}/ r^{2-a} >0$ (\cite{orey:68:ams}; also see
  \cite{sato:99}, Proposition 28.3),  the conditions in
  \eqref{e:X-Y-moment} are satisfied no matter the value of $M$.  Then
  by Proposition \ref{prop:exp-L}, $\dtv(X, \Delta_r+T_r) =
  O(r^{2a})$.  This may be compared to the normal approximation in
  \cite{lorz:91:stat, asmussen:01}, where $\dks$ between $X$ and its
  normal approximation is of rate $O(r^{a/2})$ when $X$ is
  asymmetric.

  Furthermore, if $X = X\Sp 1 - X\Sp 2$ is symmetric, where $X\Sp i$
  are i.i.d.\ with $\levy$ measure $\ld$, then by similar argument
  while using 2) of Corollary  \ref{cor:cum-match2}, it can be
  seen that we can set $q=10$ and get $\dtv(X, \Delta_r+T_r) =
  O(r^{4a})$, whereas the $\dks$ between $X$ and its normal
  approximation in this case is of rate $O(r^a)$ \cite{asmussen:01}.
  \qed
\end{example}

\begin{example} \label{ex:stable-tilted} \rm
  Let $\ld(\dd u) = \cf{u>0} u^{-a-1} \exp(-u^b) \,\dd u$, where $a\in
  (0,2)$ and $b>0$.  If we directly evaluate $\int_0^r u^j \ld(\dd u)$
  for $j\ge 2$, there is no closed formulas available.  The following
  method avoids the problem.  Recall that for any odd positive integer
  $n$, $e^{-u} \ge f_n(u)$ for $u\ge 0$, where $f_n(u) = \sum_{i=0}^n
  (-u)^i/i!$.  Let $n\ge 1$ be the smallest odd number greater than
  $a/b-1$ and $F(u) = \cf{0<u<r_0} f_n(u^b)$, where $r_0 = \sup\{r>0:
  f_n(u)>0$ for all $0\le u<r^b\}$.  Decompose $\ld(\dd u) = \ld_1(\dd
  u) + \ld_2(\dd u)$, where $\ld_1(\dd u) = \cf{u>0} u^{-a-1}
  F(u)\,\dd u$.  Because $u^{-a-1} [\exp(-u^b) - F(u)] =
  O(u^{(n+1)b-a-1})$ as $u\to 0+$, $\ld_2$ has finite mass, and hence
  corresponds to a compound Poisson random variable that can be
  sampled exactly.  Since $\ld_1(\dd u) = \cf{0<u<r_0} u^{-a-1}
  f_n(u^b)$, for $0<r<r_0$, it is easy to evaluate $\int_0^r u^j
  \ld_1(\dd u)$.  Then we can apply PGN approximation to $\ld_1$.  If
  $X$, $X'$, and $X''$ denote i.d.\ random variables with $\levy$
  measures $\ld$, $\ld_1$, and $\ld_2$, respectively, and $\Delta_r$
  and $T_r$ the i.d.\ random variables from the approximation, then by
  Proposition \ref{prop:cum-match2}, we can get $\dtv(X, \Delta_r +
  T_r + X'') \le \dtv(X', \Delta_r + T_r) = O(r^{2a})$.
\end{example}

\begin{example} \label{ex:ln} \rm
  Let $\ld(\dd u) = c \cf{0<u<1} u^{-1} \ln(1/u)\,\dd u$.  Since
  \begin{align*}
    \int_{u<r} u^2 \,\ld(\dd u) = c\int_0^r u\ln(1/u)\,\dd u
    = \frac{c r^2 [2\ln(1/r) + 1]}{4}
  \end{align*}
  by Proposition 2.1 in \cite{asmussen:01}, normal approximation
  works in the sense that its error in terms of $\dks$ tends to 0 as
  $r\to 0+$.  However, since for $|t|\gg 1$,
  \begin{align*}
    L(t,r) = \frac{C_1^2 t^2}{2} \int_{u<1/|t|} u^2\,\ld(\dd u)
    \sim \frac{c C_1^2 \ln |t|}{2},
  \end{align*}
  condition \eqref{e:exp-L} holds only when $c$ is large enough.
  Furthermore, even when \eqref{e:exp-L} holds, the bound in
  \eqref{e:pgn-dtv} decreases to 0 very slowly as $r\to 0$.
  \qed
\end{example}

\section{Multivariate Poisson-Gamma-Normal approximation} 
\label{sec:multivariate}
\subsection{Radial cumulant matching}
In this section, we assume $X\in \Reals^d$ such that in polar
coordinates its $\levy$ measure is
\begin{align*}
  \ld(\dd u,\dd\theta) = \ld(\dd u\gv\theta)\, \nu(\dd\theta), \quad
  \theta\in S,\ u>0,
\end{align*}
where $S=\{\theta\in\Reals^d: \Norm\theta=1\}$, $\nu$ is a finite
measure on $S$, and for each $\theta\in S$, $\ld(\dd u\gv\theta)$ is a
$\levy$ measure on $(0,\infty)$.  For symmetric $X$, $\ld(\dd
u\gv\theta) = \ld(\dd u\gv -\theta)$ and $\nu(\dd\theta) \equiv
\nu(-\dd\theta)$.  Without loss of generality, assume
\begin{gather*}
  \text{$\sppt(\ld)$ is bounded and not contained in a linear
    space of lower dimension}.
\end{gather*}
In particular, the assumption implies $\mean \snorm{X}^p<\infty$ for
all $p>0$.

The so-called radial cumulant matching is as follows.  For each
$\theta\in S$, find $\sigma(\theta)\ge 0$ and a $\levy$ measure
$\cpg(\dd u\gv\theta)$ on $(0,\infty)$, such that, first, for some
$q>2$,
\begin{align*}
  \int u^j \ld(\dd u\gv\theta) = \cf{j=2} \sigma(\theta)^2 + 
  \int u^j \cpg(\dd u\gv\theta) \quad\nu\text{-a.e.}\ \theta
\end{align*}
for all $2\le j<q$ if $X$ is asymmetric, or for all even valued $j\ge
2$ less than $q$ if $X$ is symmetric, and second, for
$\theta\ne\theta'$, if $\ld(\dd u\gv\theta) = \ld(\dd u\gv\theta')$,
then $\sigma(\theta) = \sigma(\theta')$ and $\cpg(\dd u\gv \theta) =
\cpg(\dd u \gv\theta')$.  Let $T$ be an i.d.\ random variable with
\begin{align}
  \che_T(t) = \int\, \Sbr{
    \nth 2\sigma(\theta)^2 \ip t\theta^2 +
    \int (1+\iunit \ip t\theta u- e^{\iunit \ip t\theta u})\,
    \cpg(\dd u\gv\theta)
  }\nu(\dd\theta).
  \label{e:T-chexp}
\end{align}
Then $\mean T = 0$.  If $X$ is asymmetric, then for any $\alpha$ with
$1<|\alpha|< q$,
\begin{align*}
  \kappa_{\alpha, T}
  &=
  \int \theta^\alpha \Sbr{\cf{|\alpha|=2} \sigma(\theta)^2
    + \int u^{|\alpha|}\cpg(\dd u\gv\theta)
  }\nu(\dd\theta)
  =
  \int \theta^\alpha u^{|\alpha|} \ld(\dd u\gv\theta)
  \nu(\dd\theta),
\end{align*}
which is just $\kappa_{\alpha, X}$.  If $X$ is symmetric, the equality
holds for any $\alpha$ with $|\alpha|$ being even and $1<|\alpha|<q$.
On the other hand, if $|\alpha|$ is odd, then $\kappa_{\alpha, X}=0$
and from the construction of $\gamma$, $\kappa_{\alpha, T}=0$.
Therefore, $X$ and $T$ have the same cumulants up to order $q-1$.

The Gaussian component of $T$ has covariance $\int \theta\theta'
\sigma(\theta)^2 \nu(\dd\theta)$, which can be difficult to evaluate.
For normal approximation, the issue can be circumvented by using the
asymptotic of the covariance \cite{cohen:07}.  However, this approach
rules out higher order approximation.  We propose the following
solution.  Since $\ld(\dd u\gv\theta)$ is a $\levy$ measure on
$(0,\infty)$ for each $\theta\in S$, given $\tau>0$, it is possible to
select $r=r(\theta) > 0$ and then set $p=p(\theta)$, $m(r) =
m(r,\theta)$, and $s(r) = s(r,\theta)$ as in Propositions
\ref{prop:cum-match}, \ref{prop:cum-match2}, or
\ref{prop:cum-match-symm}, such that,
letting $\cpg(\dd u\gv\theta) = m(r)\cf{u>0} u^p e^{-u/s(r)}\,\dd u$,
\begin{align*}
  \sigma(\theta)^2= \int \cf{u<r} u^2 \ld(\dd u\gv\theta)
  - \int u^2 \cpg(\dd u\gv\theta)
  = \tau^2>0,
  \quad \text{$\nu$-a.e.\ $\theta\in S$}.
\end{align*}
With this choice of $\cpg(\dd u\gv\theta)$, the Gaussian component is
$N(0, \tau^2 K_\nu)$, where
\begin{align} \label{e:K-theta}
  K_\nu = \int \theta\theta'\nu(\dd\theta)
\end{align}
can be much more manageable.  This is the same matrix identified in
formula (3.17) of \cite{cohen:07}.  By the assumption on $\ld$,
$K_\nu$ is positive definite (p.d.).

There is some flexibility in choosing $\nu$.  Given $w(\theta)$
measurable on $S$ with $0<\ess\inf w \le \ess\sup w < \infty$ under
$\nu$, $\ld(\dd u, \dd\theta)$ can be written as $\tilde\ld(\dd
u\gv\theta) \tilde \nu(\dd \theta)$, where
\begin{align*}
  \tilde\ld(\dd u\gv\theta) = w(\theta) \ld(\dd u\gv \theta), \quad
  \tilde \nu(\dd \theta) = \nu(\dd \theta)/w(\theta).
\end{align*}
If $r$, $p$, $m(r)$ and $s(r)$ are set according to $\tilde\ld(\dd
u\gv\theta)$ instead of $\ld(\dd u\gv\theta)$, then the matrix in
\eqref{e:K-theta} becomes $\int \theta \theta' \tilde\nu(\dd\theta)$.
This allows one to choose $w(\theta)$ to simplify the evaluation of
the matrix.

In this setting, $\tau$ instead of $r$ is the parameter, and $r$, $p$,
$m(r)$, and $s(r)$ are functions of $(\tau,\theta)$.  We denote the
functions by $r_\tau(\theta)$, $p_\tau(\theta)$, $m_\tau(\theta)$ and
$s_\tau(\theta)$, respectively.  Evidently,
\begin{align} \label{e:mv-sigma-tau}
  \int \cf{u<r_\tau(\theta)} u^2 \ld(\dd u\gv\theta)
  - \int_0^\infty m_\tau(\theta) u^{p_\tau(\theta)+2}
  e^{-u/s_\tau(\theta)}\,\dd u
  = \tau^2  \quad\text{$\nu$-a.e.\ $\theta\in S$}.
\end{align}
Additionally,
\begin{align*}
  \ld(\dd u\gv\theta) = \ld(\dd u\gv\theta') \implies
  f(\theta) = f(\theta'), \text{ for } f=r_\tau, p_\tau, m_\tau,
  s_\tau \text{ for all } \tau>0.
\end{align*}

Now define $\levy$ measures
\begin{align}  \label{e:mv-gamma}
  \begin{split}
    \ld_\tau(\dd u,\dd\theta)
    &= \ld_\tau(\dd u\gv\theta)\nu(\dd\theta), \
    \text{with}\
    \ld_\tau(\dd u\gv\theta)
    =\cf{u<r_\tau(\theta)} \ld(\dd u\gv  \theta)\nu(\dd\theta),
    \\
    \cpg_\tau(\dd u, \dd \theta)
    &=
    \cpg_\tau(\dd u\gv\theta)
    \nu(\dd\theta), \ 
    \text{with}\ 
    \cpg_\tau(\dd u\gv\theta)
    =
    m_\tau(\theta)\cf{u>0}
    u^{p_\tau(\theta)} e^{-u/s_\tau(\theta)}\,\dd u.
  \end{split}
\end{align}
Then, for suitable $q\ge 5$, which depends on how $r_\tau$, $p_\tau$,
$m_\tau$, and $s_\tau$ are constructed, 
\begin{align} \label{e:radial-match}
  \int u^j \ld_\tau(\dd u\gv\theta) = \cf{j=2} \tau^2 + 
  \int u^j \cpg_\tau(\dd u\gv\theta),
  \quad\text{$\nu$-a.s.\ $\theta\in S$}
\end{align}
for all $2\le j<q$ if $X$ is asymmetric, or for all even valued $j\ge
2$ less than $q$ if $X$ is symmetric.  Next, decompose $X$ as the sum
of independent i.d.\ random variables $X_\tau$ and $\Delta_\tau$, with
\begin{align*}
  \che_{X_\tau}(t)
  &=
  \int (1+\iunit u\ip t\theta - e^{\iunit u\ip
    t\theta})\, \ld_\tau(\dd u, \dd\theta),
\end{align*}
and $\che_{\Delta_\tau}(t) = \che_X(t) - \che_{X_\tau}(t)$.  
Then approximate $X_\tau$ by
\begin{align*}
  T_\tau= Y_\tau + \tau Z,
  \quad\text{with}\  Z\sim N(0, K_\nu) \text{ independent of
  } Y_r,
\end{align*}
where $Y_\tau$ is i.d.\ with mean 0 and no Gaussian component, and
with $\levy$ measure $\cpg_\tau$.  Finally, $X$ is approximated by
$\Delta_\tau + T_\tau$.

Clearly, in order for the solution to be valid,  $\ld_\tau$ and
$\cpg_\tau$ have to be valid $\levy$ measures.  First, this means
$\cf{u< r_\tau(\theta)}$ and $m_\tau(\theta) \cf{u>0}
u^{p_\tau(\theta)} e^{-u/s_\tau(\theta)}$ must be measurable functions
of $(u,\theta)$.  In many cases, the measurability is not difficult to
verify.  Provided it is established, $\ld_\tau$ immediately is a valid
$\levy$ measure.  On the other hand, since $\int u^2 \cpg_\tau(\dd
u,\dd \theta) \le \int u^2 \ld(\dd u, \dd \theta) < \infty$, by the
comments in Section \ref{ss:assumptions}, $\cpg_\tau$ is a valid
$\levy$ measure and $\mean\snorm{Y_\tau}^2 < \infty$.

The solution shifts the burden of evaluating the normal covariance
to the sampling of $\Delta_\tau$ and $Y_\tau$.  For the latter, the
following statements are true.
\begin{prop} \label{prop:mv}
  1) If
  \begin{align} \label{e:mv-B}
    \int_S B_\tau(\theta)\nu(\dd\theta)<\infty
    \quad\text{with}\
    B_\tau(\theta)
    = \int \cf{u\ge r_\tau(\theta)}\ld(\dd u\gv\theta),
  \end{align}
  then $\Delta_\tau\sim \tilde\zeta_1\tilde\omega_1 + \cdots +
  \tilde\zeta_N \tilde\omega_N - \tilde\mu$, where
  $\{\tilde\omega_i\}$ is a Poisson process on $S$ with $\levy$
  measure $B_\tau\,\dd\nu$, conditional on $\{\tilde\omega_i\}$,
  $\eno{\tilde\zeta} N$ are independent, with $\tilde\zeta_i\sim
  \cf{u\ge r_\tau(\tilde\omega_i)} \ld(\dd u\gv\tilde\omega_i)/
  B_\tau(\tilde\omega_i)$, and
  \begin{align*}
    \tilde\mu = \int \theta u \cf{u\ge r_\tau(\theta)} \ld(\dd
    u\gv\theta) \,\nu(\dd\theta).
  \end{align*}

  2) If $p_\tau(\theta)>-1$ for $\nu$-a.e.\ $\theta$ and
  \begin{align} \label{e:mv-N}
    \int_S N_\tau(\theta) \nu(\dd\theta)<\infty
    \quad\text{ with}\
    N_\tau(\theta) = \Gamma(p_\tau(\theta) + 1) 
    m_\tau(\theta) s_\tau(\theta)^{p_\tau(\theta) +1},
  \end{align}
  then $Y_\tau\sim \zeta_1 \omega_1 + \cdots + \zeta_N\omega_N - \mu$,
  where $\{\omega_i\}$ is a Poisson process on $S$ with $\levy$
  measure $N_\tau\, \dd\nu$, conditional on $\{\omega_i\}$, $\eno\zeta
  N$ are independent, with $\zeta_i \sim \dgamma(p_\tau(\omega_i),
  s_\tau(\omega_i))$, and
  \begin{align*}
    \mu = \int_S \theta (p_\tau(\theta)+1) s_\tau(\theta)
    N_\tau(\theta)\nu(\dd \theta).
  \end{align*}

  3) If $X$ is symmetric, then $\tilde\mu=\mu=0$.

  4) If $\ld$ is direction independent, i.e, $\ld(\dd u\gv\theta) =
  \ld_0(\dd u)$ for $\nu$-a.e.\ $\theta\in S$ for some $\levy$ measure
  $\ld_0$, then given $\tau>0$, $r_\tau$, $m_\tau$, $p_\tau$,
  $s_\tau$, and $N_\tau$ are $\nu$-a.e.\ constant, and $\tilde\mu
  =  \theta_\nu\int u\cf{u\ge r_\tau}\,\ld_0(\dd u)$, $\mu = \theta_\nu
  (p_\tau + 1) s_\tau N_\tau$, where $\theta_\nu = \int_S
  \theta\,\nu(\dd\theta)$.
\end{prop}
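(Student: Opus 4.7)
The plan is to identify $\Delta_\tau$ and $Y_\tau$ as compound Poisson random variables (up to the centering drift built into \eqref{e:ID}) and then to represent each as a marked Poisson point process on $S$ carrying magnitude marks. Under the finiteness assumptions \eqref{e:mv-B} and \eqref{e:mv-N}, the relevant L\'evy measures have finite total mass, so this classical representation applies; the subtracted drifts arise because \eqref{e:ID} centers each jump by $\iunit\ip t u$ inside the integrand.

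For part 1), I would start from the fact that the L\'evy measure of $\Delta_\tau$ on $\Reals^d$ is the pushforward of $\cf{u\ge r_\tau(\theta)}\ld(\dd u\gv\theta)\,\nu(\dd\theta)$ under $(u,\theta)\mapsto u\theta$, with total mass $\int_S B_\tau(\theta)\,\nu(\dd\theta)<\infty$. Rewriting
\[
\che_{\Delta_\tau}(t)
= \iunit\ip t{\tilde\mu} + \int\Grp{1 - e^{\iunit u\ip t\theta}}\cf{u\ge r_\tau(\theta)}\ld(\dd u\gv\theta)\,\nu(\dd\theta),
\]
the second term is the characteristic exponent of a non-centered compound Poisson random variable, and the marked Poisson point process with mean measure $B_\tau\,\dd\nu$ on $S$ and conditional magnitude law $\cf{u\ge r_\tau(\cdot)}\ld(\dd u\gv\cdot)/B_\tau(\cdot)$ produces the claimed sum $\sum_{i=1}^N \tilde\zeta_i\tilde\omega_i$; subtracting $\tilde\mu$ then restores centering. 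For part 2), the identical argument is applied to $Y_\tau$, whose L\'evy measure has total mass $\int_S N_\tau\,\dd\nu<\infty$ exactly when $p_\tau(\theta)>-1$ $\nu$-a.e.; along direction $\omega_i$ the conditional magnitude distribution is proportional to $u^{p_\tau(\omega_i)} e^{-u/s_\tau(\omega_i)}$, which is the stated Gamma density, and the formula for $\mu$ follows from $\int_0^\infty u\cdot m_\tau u^{p_\tau}e^{-u/s_\tau}\,\dd u=(p_\tau+1) s_\tau N_\tau(\theta)$.

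Part 3) reduces to a symmetry argument: the standing hypothesis that $f(\theta)=f(\theta')$ whenever $\ld(\dd u\gv\theta)=\ld(\dd u\gv\theta')$, for $f\in\{r_\tau,p_\tau,m_\tau,s_\tau\}$, combined with $\ld(\dd u\gv-\theta)=\ld(\dd u\gv\theta)$ and $\nu(\dd\theta)=\nu(-\dd\theta)$, makes the integrands defining $\tilde\mu$ and $\mu$ odd in $\theta$ under the measure-preserving involution $\theta\mapsto-\theta$, forcing both drifts to vanish. Part 4) is immediate once one notes that direction-independence of $\ld$ forces $r_\tau,p_\tau,m_\tau,s_\tau,N_\tau$ to be $\nu$-a.e.\ constant, whereupon the $\theta$-integrals in $\tilde\mu$ and $\mu$ factor as $\theta_\nu=\int_S\theta\,\nu(\dd\theta)$ times a scalar, giving the displayed formulas.

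The main obstacle, such as it is, is bookkeeping: one must verify that the drift produced by the compound Poisson representation exactly cancels the centering in \eqref{e:ID} when passing from the $(u,\theta)$-parametrization to jumps in $\Reals^d$, and that the Gamma shape-scale convention used in writing $\zeta_i$'s law matches that adopted in the univariate section. The required measurability of $r_\tau,p_\tau,m_\tau,s_\tau$ in $\theta$ is already presumed in the setup preceding the proposition and so does not enter the proof itself.
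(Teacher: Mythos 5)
Your proof is correct and takes essentially the same route as the paper's own (very terse) proof, which simply identifies the L\'evy measure of $\Delta_\tau$, notes its finite mass, invokes standard compound Poisson theory, and defers 3) and 4) to the construction of $r_\tau, p_\tau, m_\tau, s_\tau$. One point you flag but leave open is worth closing: your conditional magnitude density $\propto u^{p_\tau}e^{-u/s_\tau}$ and your drift computation $\int_0^\infty u\cdot m_\tau u^{p_\tau}e^{-u/s_\tau}\,\dd u = (p_\tau+1)s_\tau N_\tau(\theta)$ are both consistent with $\zeta_i \sim \dgamma(p_\tau(\omega_i)+1,\, s_\tau(\omega_i))$, matching the univariate shape--scale convention stated after \eqref{e:pgn-r}, so the shape parameter in the proposition as printed ($p_\tau(\omega_i)$ rather than $p_\tau(\omega_i)+1$) is a typo and your derivation is the correct one.
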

\begin{proof}
  1) It is easy to see $\Delta_\tau$ has mean 0 and no Gaussian
  component, and its $\levy$ measure is $\cf{u>r_\tau(\theta)}
  \ld(\dd u\gv\theta) \nu(\dd\theta)$, which by assumption
  \eqref{e:mv-B} has a finite mass.  Then 1) follows from standard
  results on compound Poisson processes \cite{daley:ver:02}.  The
  proof of 2) is similar.  Finally, both 3) and 4) follow from the
  construction of the functions $r_\tau(\theta)$, $p_\tau(\theta)$,
  $s_\tau(\theta)$, and $m_\tau(\theta)$.
\end{proof}

To apply the result, it is desirable that
\begin{align*}
  \ess\sup B_\tau(\theta) < \infty, \quad
  \ess\sup N_\tau(\theta)<\infty \quad\text{under}\ \nu,
\end{align*}
because $\{\tilde \omega_i\}$ and $\{\omega_i\}$ then can be sampled
using the standard thinning method (\cite{devroye:86b}, p.~253--255),
as long as it is easy to sample the Poisson process on $S$ with
$\levy$ measure $C\nu$ for any $C>0$.  In this case, there is no need
to know all $B_\tau(\theta)$ and $N_\tau(\theta)$ beforehand.  Parts
3) and 4) of Proposition \ref{prop:mv} lists two cases where $\tilde
\mu$ and $\mu$ are calculable.  It follows that the quantities are
still calculable if $\ld$ is the sum of a symmetric $\levy$ measure
and a $\levy$ measure $\ld'$ that is ``piecewise'' direction
independent, i.e., $\ld'(\dd u\gv\theta) = \ld_i(\dd u)$ for
$\nu$-a.s.\ $\theta\in S_i$, $i=1,\ldots, n$, where $S_i$ form a
partition of $S$.   However, in other cases, $\tilde\mu$ and $\mu$ may
not have closed form expressions.  This is the limitation alluded in
Introduction.

\begin{example} \label{ex:mv-stable}\rm
  Let $\ld(\dd u, \dd \theta) = \cf{0<u<r_0} u^{-a(\theta)-1}\,
  \dd u\, \nu(\dd\theta)$, where $r_0\in (0,\infty)$ and $a(\theta)\in
  (0,2)$ is a measurable function on $S$, such that
  \begin{align} \label{e:mv-stable-a}
    0< a_0:=\ess\inf a\le \ess\sup a=a_1<2
    \quad\text{under } \nu.
  \end{align}
  Suppose also that $\ld$ is symmetric.  For $a\in (0,2)$, let
  $\pi(a)$ be the unique solution in $(0,\infty)$ to 
  \begin{align*}
    \frac{(p+7)(p+8)}{(p+5)(p+6)}
    = \frac{(6-a)^2}{(4-a)(8-a)}.
  \end{align*}
  Since $\ld(\dd u\gv\theta) = \cf{0<u<r_0} u^{-a(\theta)-1}\,\dd u$,
  in light of Corollary \ref{cor:cum-match2}, define $r_\tau(\theta)$,
  $p_\tau(\theta)$, $s_\tau(\theta)$, and $m_\tau(\theta)$ as follows.
  First, let $p_\tau(\theta) = \pi(a(\theta))$.  Next, provided
  $r_\tau(\theta)\in (0,r_0)$, define
  \begin{align*}
    s_\tau(\theta) = J_1(a(\theta)) r_\tau(\theta),
    \quad
    m_\tau(\theta) = J_2(a(\theta))
    r_\tau(\theta)^{-1-a(\theta) - \pi(a(\theta))}
  \end{align*}
  according to \eqref{e:pgn-s-symm}, where
  \begin{align*}
    J_1(a)
    &=
    \sqrt{\frac{(4-a)}{(\pi(a)+5)(\pi(a)+6)(6-a)}}, 
    \quad
    J_2(a)
    =
    \nth{2\Gamma(\pi(a)+5) (4-a) J_1(a)^{\pi(a)+5}}
  \end{align*}
  are strictly positive and continuous on $(0,2)$.  This yields
  $\int u^2 \ld_\tau(\dd u\gv\theta) = J_0(a(\theta))
  r_\tau(\theta)^{2-a(\theta)}$, where $J_0(a) = 1/(2-a)$, and 
  \begin{align*}
    \int u^2 \cpg_\tau(\dd u\gv\theta) = \Gamma(\pi(a(\theta))+3)
    m_\tau(\theta) s(r)^{\pi(a(\theta))+3} = J_3(a(\theta))
    r_\tau(\theta)^{2-a(\theta)},
  \end{align*}
  where $J_3\in C(0,2)$ is strictly positive.  In particular,
  $0<J_3(a)<J_0(a)$.   Finally, from condition 
  \eqref{e:mv-sigma-tau}, it follows that if $r_\tau(\theta) \in
  (0,r_0)$, then $r_\tau(\theta) = J_4(a(\theta)) \tau^{2 / (2 -
    a(\theta))}$, where
  \begin{align*}
    J_4(a) =[J_0(a) -J_3(a)]^{-1/(2-a)}\in C(0,2).
  \end{align*}
  By assumption \eqref{e:mv-stable-a}, for all small $\tau>0$,
  $\ess\sup J_4(a(\theta)) \tau^{2/(2-a(\theta))} < r_0$, and hence
  all the above definitions are valid.  Since $\pi(a)$ and all
  $J_i(a)$ are continuous functions, $\pi(a(\theta))$ and
  $J_i(a(\theta))$ are measurable functions of $\theta$.  It is then
  easy to see $\cf{u<r_\tau(\theta)}$ is a measurable function of
  $(u,\theta)$ and hence $\ld_\tau$ is a valid $\levy$ measure.
  Likewise, $\cpg_\tau$ is a valid $\levy$ measure.  Consequently, by
  Corollary \ref{cor:cum-match2} and the symmetry of $\ld$,
  \eqref{e:radial-match} is satisfied with $q=10$.

  We consider the sampling of $\Delta_\tau$ and $Y_\tau$
  based on Proposition \ref{prop:mv}.  Given $\tau>0$ small enough,
  by $r_\tau(\theta) = J_4(a(\theta)) \tau^{2/(2-a(\theta))}$ and
  \eqref{e:mv-stable-a},
  \begin{align*}
    \ess\sup B(\theta)
    =
    \ess\sup \int_{r_\tau(\theta)}^{r_0} u^{-a(\theta)-1}\,\dd u
    \le \ess\sup \frac{r_\tau(\theta)^{-a(\theta)}}{a(\theta)} < 
    \infty.
  \end{align*}
  Since $\ld$ is symmetric, then by Proposition \ref{prop:mv},
  $\Delta_\tau=\tilde\zeta_1 \tilde\omega_1 + \cdots + \tilde\zeta_N
  \tilde\omega_N$, where $\{\tilde\omega_i\}$ is a Poisson process
  that can be sampled using the thinning method, while conditioning on
  $\tilde\omega_i$, the probability density of $\tilde\zeta_i$ is 
  proportional to $\cf{r_\tau(\tilde \omega_i) \le u<r_0}
  u^{-a(\tilde\omega_i)-1}$.  On the other hand, by the construction
  of $p_\tau$, $m_\tau$ and $s_\tau$, $N_\tau(\theta) = J(a(\theta))
  r_\tau(\theta)^{-a(\theta)}$ for some continuous $J(a)>0$, so again
  by \eqref{e:mv-stable-a}, $\ess\sup N(\theta)<\infty$.  Then
  $Y_\tau=\zeta_1 \omega_1 + \cdots +\zeta_N \omega_N$,  where
  $\{\omega_i\}$ is a Poisson process that can be sampled using the
  thinning method, while conditioning on $\omega_i$, $\zeta_i\sim
  \dgamma(\pi(a(\omega_i)), s_\tau(\omega_i))$.

  Unfortunately, unlike the univariate case, currently there are no
  computationally efficient methods to sample $\Delta_\tau$ or
  $Y_\tau$, other than sampling individual $(\tilde \omega_i, \tilde
  \zeta_i)$ or $(\omega_i,\zeta_i)$ and then taking the sum of $\tilde
  \zeta_i\tilde \omega_i$ or $\zeta_i\omega_i$.  This raises the issue of
  computational complexity of the PGN approximation.  In the next
  subsection, after obtaining an error bound for the PGN
  approximation, we will come back to the issue and argue that, in
  some cases, comparing to normal approximation \cite{cohen:07}, the
  improvement in error rate is worth the extra computation complexity,
  at least asymptotically.

  Finally, we remark that if $\nu(\dd \theta) = w(\theta)\, \sigma(\dd
  \theta)$, where $\sigma$ is the spherical measure on $S$ and
  $w(\theta)$ is measurable on $S$ with $0<\ess\inf
  w\le \ess\sup w <\infty$ under $\sigma$, then by
  setting $r_\tau(\theta) = J_4(a(\theta)) [\tau^2/w(\theta)]^{1/(2-
    a(\theta))}$ and adjusting $s_\tau(\theta)$ and $m_\tau(\theta)$
  accordingly, we get $K_\nu = \int \theta\theta'\sigma(\dd\theta) =
  I/d$.  With $\sigma(\dd\theta)$ being the new $\nu(\dd\theta)$, the
  sampling of $\Delta_\tau$ and $Y_\tau$ can be done as above.
  \qed
\end{example}

\begin{example} \label{ex:tempered} \rm
  Normal approximation of tempered stable processes is studied in
  detail in \cite{cohen:07}.  For such a process, $\ld(\dd u\gv
  \theta) = \cf{u>0} u^{-a-1} h(u,\theta) \, \dd u$, where for each
  $\theta$, $h(u,\theta)$ is a completely monotone function with
  $h(0+,\theta) =1$ and $h(\infty,\theta)=0$.  A generalized shot
  noise series representation is used in \cite{cohen:07} for normal
  approximation.  Although the resulting covariance of the normal
  distribution in general has no closed form, it is shown that by
  using its asymptotic, the normal distribution still works.

  In the context of PGN approximation to the i.d.\ distribution with
  $\levy$ measure $\ld$, we have to evaluate the covariance
  precisely.   At this point, a solution to the general case has not
  been found.  However, suppose $h(u,\theta)$ can be written as 
  \begin{align*}
    h(u,\theta) = 1-h_1(\theta) u + h_2(u,\theta)u^2,
  \end{align*}
  such that $0\le \ess\inf h_1 \le \ess\sup h_1 < \infty$ under $\nu$
  and $0\le \ess\inf h_2 \le \ess\sup h_2<\infty$ under
  $\ell \times \nu$, where $\ell$ is the Lebesgue measure, then the
  following method can be used.  First, fix $r_0$ such that
  $1-h_1(\theta) r_0\ge 0$ for $\nu$-a.e.\ $\theta$.  Let
  \begin{align*}
    \mu(\dd u\gv \theta) = \cf{0<u<r_0} u^{-a-1} [1-h_1(\theta)u]\,
    \dd u
  \end{align*}
  and $\mu(\dd u,\dd\theta) = \mu(\dd u\gv\theta) \,\nu(\dd \theta)$.
  Decompose $\ld = \mu + \ld_1$, with $\ld_1=\ld - \mu$.  For
  $0<u<r_0$, $\ld_1(\dd u \gv\theta)  = u^{-a-1} [h(u,\theta)
  -1+h_1(\theta)u]\,\dd u\, \nu(\dd\theta) = u^{-a+1} h_2(\theta,u)
  \,\dd u \,\nu(\dd\theta)$, while for $u\ge r_0$, $\ld_1(\dd
  u\gv\theta) = \ld(\dd u\gv\theta)$.  As a result, $\ld_1$ is a
  $\levy$ measure with
  \begin{align*}
    \int \ld_1(\dd u,\dd\theta)
    = \int \nu(\dd\theta) \int_0^{r_0} u^{-a+1} 
    h_2(\theta, u)\,\dd u + \int \cf{u\ge r_0} \ld(\dd
    u,\dd\theta)<\infty
  \end{align*}
  and hence it gives rise to a compound Poisson random variable.  We
  therefore only need to apply PGN approximation to $\mu$.  It is easy
  to compute $\int u^j \mu(\dd u\gv\theta)$ for $j\ge 2$.  Then the
  functions $r_\tau(\theta)$, $p_\tau(\theta)$,  $s_\tau(\theta)$, and
  $m_\tau(\theta)$ can be fixed following Example \ref{ex:mv-stable},
  although the calculation is more tedious due to the extra term
  $h_1(\theta) u$.  \qed 
\end{example}

\subsection{Error bound for approximation} \label{ss:bound-mv}
We next consider how well $X$ is approximated by $\Delta_\tau +
T_\tau$.  For any symmetric p.d.\ matrix $\Sigma$, denote by
$\Sigma^{1/2}$ the unique symmetric p.d.\ matrix whose square is equal
to $\Sigma$.  Given $\tau>0$, let
\begin{align*}
  \Sigma_\tau = \var(X_\tau) = \var(T_\tau), \quad
  \SRM_\tau = \Sigma_\tau^{1/2}, \quad
  \std K = \tau^2 \SRM_\tau^{-1} K_\nu \SRM_\tau^{-1}.
\end{align*}
Let $C_1$ and $C_2$ be the same constants defined in Section
\ref{ss:error-1d}.  Denote, for $z>0$,
\begin{align}
  \begin{array}{c}
  \varrho_\tau(z) 
  =
  \text{smallest eigenvalue of } C_1^2 \SRM_\tau^{-1} M_\tau(z)
  \SRM_\tau^{-1}
  \text{ and }
  \std K
  \\[1ex]
  \displaystyle\text{with}\ \;
  M_\tau(z)
  = 
  \int u^2\theta\theta' \cf{u<r_\tau(\theta)\wedge
    \frac{z}{\snorm{\SRM_\tau^{-1} \theta}}}
  \ld_\tau(\dd u\gv\theta)\,\nu(\dd \theta).
  \end{array}
  \label{e:root-std}
\end{align}
Finally, denote
\begin{align*}
  h(d) = \Flr{d/2} +1.
\end{align*}
\begin{theorem} \label{thm:tv-mv}
  Given $\tau>0$, suppose that under $\nu$,
  \begin{gather}
    \ess\inf p_\tau(\theta)\ge -1, \quad
    \ess\sup \frac{[p_\tau(\theta)+3] s_\tau(\theta)}
    {r_\tau(\theta)}\le 1
    \label{e:mv-p-s-r}
    \\
    \std R:=\ess\sup
    \snorm{\SRM_\tau^{-1}\theta} r_\tau(\theta)
    \le 1, \quad
    \std S:=\ess\sup\,
    [p_\tau(\theta)+ h(d)+1] \snorm{\SRM_\tau^{-1}
      \theta} s_\tau(\theta)
    \le 1
    \label{e:R-S-std}
  \end{gather}
  and for some $q\ge 5$, $\kappa_{\alpha, X_\tau} = \kappa_{\alpha,
    T_\tau}$ for $2\le |\alpha|<q$ and
  \begin{align} \label{e:exp-L-mv}
    \int_0^\infty s^{2q+2h(d)+d-1}
    e^{-\varrho_\tau(1/s) s^2}\,\dd s<\infty.
  \end{align}
  Then
  \begin{align} \label{e:pgn-dtv-mv}
    \dtv(X, \Delta_\tau+T_\tau) \le
    \frac{G(d,q,\tau)}{q!}
    \Sbr{
      \int\snorm{u\SRM_\tau^{-1}\theta}_1^q \ld_\tau(\dd u,
      \dd\theta)
      +
      \int \snorm{u\SRM_\tau^{-1}\theta}_1^q \cpg_\tau(\dd u,
      \dd\theta)
    }
  \end{align}
  where $\snorm x_1$ stands for the $L^1$ norm $|x_1| + \cdots +
  |x_d|$ and 
  \begin{align*}
    G(d,q,\tau) = 
    c(d,q) \sqrt{
      1+\int_{1/\std R}^\infty L_{d,q}(s) e^{-\varrho_\tau(1/s) s^2}
      \,\dd s
    }
  \end{align*}
  with $c(d,q)$ being a constant only depending on $(d,q)$ and
  $L_{d,q}(s)$ a polynomial of order no greater than $2q+2h(d)+d-1$
  whose coefficients are constants only depending on $(d,q)$.
\end{theorem}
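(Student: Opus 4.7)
The plan is to reduce the problem to $\dtv(X_\tau,T_\tau)$ and attack the latter by Fourier inversion after whitening. Since $X=X_\tau+\Delta_\tau$ with $\Delta_\tau$ independent of both $X_\tau$ and $T_\tau$, convolution-monotonicity of total variation yields $\dtv(X,\Delta_\tau+T_\tau)\le\dtv(X_\tau,T_\tau)$. I then whiten by applying $\SRM_\tau^{-1}$: the variables $\tilde X=\SRM_\tau^{-1}X_\tau$ and $\tilde T=\SRM_\tau^{-1}T_\tau$ have identity covariance, total variation is invariant, and the pushforward $\levy$ measure of $\tilde X$ is supported on rescaled rays $\SRM_\tau^{-1}\theta$ with jump magnitudes $u\snorm{\SRM_\tau^{-1}\theta}$, which is precisely what produces the $\snorm{u\SRM_\tau^{-1}\theta}_1^q$ factor in \eqref{e:pgn-dtv-mv}. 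Both densities exist under $\ld(\Reals^d)=\infty$ and \eqref{e:mv-p-s-r}--\eqref{e:R-S-std}. Fourier inversion combined with the weight $1+\snorm{x}^{2h(d)}$ (whose reciprocal is integrable on $\Reals^d$ since $2h(d)>d$) reduces $\dtv(\tilde X,\tilde T)=\tfrac12\int|f_{\tilde X}-f_{\tilde T}|\,\dd x$ to a sum
\begin{align*}
  c(d)\sum_{|\alpha|\le 2h(d)}\int_{\Reals^d}|\Pd^\alpha(\chf_{\tilde X}-\chf_{\tilde T})(t)|\,\dd t.
\end{align*}

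\textbf{Lindeberg interpolation, Taylor remainder, Gaussian envelope.} For each derivative I use the continuous Lindeberg path
\begin{align*}
  \chf_{\tilde X}(t)-\chf_{\tilde T}(t)
  = [\che_{\tilde T}(t)-\che_{\tilde X}(t)]\int_0^1 e^{-s\che_{\tilde X}(t)-(1-s)\che_{\tilde T}(t)}\,\dd s,
\end{align*}
and distribute $\Pd^\alpha$ by Leibniz and Fa\`a di Bruno. The cumulant matching $\kappa_{\alpha,X_\tau}=\kappa_{\alpha,T_\tau}$ for $2\le|\alpha|<q$ makes the \levy--Khintchine integrand for $\che_{\tilde T}-\che_{\tilde X}$ a genuine $q$-th order Taylor remainder of $1+\iunit z-e^{\iunit z}$ evaluated at $z=u\ip{t}{\SRM_\tau^{-1}\theta}$. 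Bounding this remainder by $|z|^q/q!$ and then by $u^q\snorm{\SRM_\tau^{-1}\theta}_1^q\snorm{t}_\infty^q$ produces the moment factor in \eqref{e:pgn-dtv-mv} together with a $t$-polynomial of degree $q$; differentiating the integrand $2h(d)$ more times multiplies by an additional $t$-polynomial and by extra powers of $u\SRM_\tau^{-1}\theta$, which are absorbed into the same moment integrals using \eqref{e:mv-p-s-r}--\eqref{e:R-S-std}. For the exponential envelope, on $\snorm t\ge 1/\std R$ the elementary lower bound $1-\cos w\ge C_1^2 w^2/2$ for $|w|\le 1$, applied to $w=u\ip{t}{\SRM_\tau^{-1}\theta}$ with $u$ truncated at $r_\tau(\theta)\wedge(\snorm t\,\snorm{\SRM_\tau^{-1}\theta})^{-1}$, plus the Gaussian contribution $\tfrac12\langle t,\std K t\rangle$ sitting inside $\che_{\tilde T}$, gives $\Re[s\che_{\tilde X}+(1-s)\che_{\tilde T}](t)\ge\tfrac12\varrho_\tau(1/\snorm t)\snorm t^2$ uniformly in $s\in[0,1]$. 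Integrating the polynomial $L_{d,q}$ of degree at most $2q+2h(d)+d-1$ (powers from derivatives plus the $s^{d-1}$ polar Jacobian) against this Gaussian decay converges by \eqref{e:exp-L-mv}, assembling into $G(d,q,\tau)$.

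\textbf{Principal obstacle.} The chief technical difficulty is the bookkeeping in the derivative expansion: each term generated by the product rule must decompose cleanly into (i) the moment integral $\int\snorm{u\SRM_\tau^{-1}\theta}_1^q(\ld_\tau+\cpg_\tau)$ contributing the $1/q!$ Taylor factor, and (ii) a $t$-polynomial of controlled degree times the envelope. The trouble is that derivatives of an individual exponent yield integrals such as $\int u^{|\beta|}(\SRM_\tau^{-1}\theta)^\beta(1-e^{\iunit u\ip{t}{\SRM_\tau^{-1}\theta}})(\ld_\tau+\cpg_\tau)$ whose naive bound $\int u^{|\beta|}\snorm{\SRM_\tau^{-1}\theta}^{|\beta|}$ grows with $|\beta|$ in a way that cannot be absorbed directly. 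This is where the Asmussen splitting device enters: on the region $u\snorm{\SRM_\tau^{-1}\theta}\snorm t\le 1$ one uses Taylor expansion, while on the complementary region one replaces $|1-e^{\iunit z}|$ by $2$, effectively trading powers of $u$ for powers of $\snorm t^{-1}$. The conditions \eqref{e:mv-p-s-r} and \eqref{e:R-S-std} are precisely the uniform constants needed for this substitution to yield bounded multipliers rather than $\tau$-dependent ones. Preserving a single envelope bound valid uniformly in $s\in[0,1]$ throughout this accounting, so that the Lindeberg interpolation genuinely delivers the $\varrho_\tau$-decay (and not merely decay for $T_\tau$ alone, since $X_\tau$ has no Gaussian component), is the most delicate part of the argument.
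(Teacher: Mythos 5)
Your proposal takes a genuinely different route from the paper's. The paper proves the theorem by a \emph{discrete} Lindeberg replacement: it writes $\std X + \rx Z = U_2+\cdots+U_{n+1}$ and $\std T + \rx Z' = V_1+\cdots+V_n$ as sums of $n$ i.d.\ pieces plus an $\rx$-Gaussian smoothing, Taylor-expands the smoothed test functions $g_k(x)=\mean h(W_k+x)$ to order $q$, uses $\mean U_k^\alpha = \mean V_k^\alpha$ for $|\alpha|<q$ to kill everything except the $q$-th remainder, controls $\snorm{g_k\Sp\alpha}_\infty$ via the $L^1$ bound on $f_{W_k}\Sp\alpha$ of Lemma~\ref{lemma:L1-mv-2}, and only at the end sends $n\toi$ (invoking Proposition~\ref{prop:AR}) and then $\rx\to 0+$. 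You instead use the \emph{continuous} Lindeberg identity $\chf_{\std X}-\chf_{\std T}=(\che_{\std T}-\che_{\std X})\int_0^1 e^{-s\che_{\std X}-(1-s)\che_{\std T}}\dd s$ at the level of characteristic functions, and go straight to $\dtv=\tfrac12\snorm{f_{\std X}-f_{\std T}}_{L^1}$ via a weighted Fourier inversion. This has a real advantage: it dispenses with both the $\rx$-smoothing and the $n\toi$ limit, since you never need the individual pieces $U_k,V_k$ to be absolutely continuous, and the cumulant cancellation appears immediately in $D(t):=\che_{\std T}-\che_{\std X}$ being a genuine $q$-th order Taylor remainder.

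The gap is that you explicitly stop short of the hardest part. Your ``principal obstacle'' paragraph correctly identifies what must be done — distributing $\Pd^\alpha$ by Leibniz/Fa\`a di Bruno over the product $D(t)\cdot\int_0^1 e^{\cdots}\dd s$ and absorbing the resulting moment integrals — but this is not a peripheral detail. When $|\beta|\ge q$ derivatives land on $D$, the term $-\int(\iunit u\theta)^\beta e^{\iunit u\ip t\theta}\dd(\std\cpg-\std\ld)$ has no cumulant cancellation, and its bound $\int u^{|\beta|}|\theta^\beta|\,\dd(\std\cpg+\std\ld)$ involves $u^{|\beta|}$ rather than $u^q$; you must trade the excess powers for powers of $\std R,\std S\le 1$ via \eqref{e:mv-p-s-r}--\eqref{e:R-S-std} and show the bounded multipliers depend only on $(d,q)$. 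When fewer than $q$ derivatives land on $D$, you must still extract a $q$-th order moment factor, which requires the Asmussen splitting you invoke but do not carry out. Finally, the factor structure you write — the first power of $\Pd^\alpha\chf$ in $L^1$ summed over $|\alpha|\le 2h(d)$ — comes from an $L^\infty$-to-$L^1$ inversion rather than the Cauchy–Schwarz/Plancherel inversion the paper uses (Lemma~\ref{lemma:L1-mv}); the former needs derivative order $2h(d)$, the latter only $h(d)$, and the envelope powers you will ultimately need to match condition \eqref{e:exp-L-mv} differ between the two. So the sketch identifies the right moving parts and a legitimate alternative scaffolding, but the accounting that produces a bound exactly of the form \eqref{e:pgn-dtv-mv} — with the moment integral appearing cleanly with the $1/q!$ coefficient and everything else packaged into $G(d,q,\tau)$ with the polynomial order $2q+2h(d)+d-1$ — remains to be done.
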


\noindent{\it Remark.}
\begin{enumerate}[itemsep=0ex, topsep=.5ex]
\item  Although $\SRM_\tau$ appears in the bound, it is not used
  in the actual construction of $T_\tau$ or $\Delta_\tau$, and
  therefore does not generate a computational problem.
\item A drawback of the bound in Theorem \ref{thm:tv-mv} is that,
  although asymptotically, the error rate can be significantly better
  than normal approximation, the constant coefficients in the bound,
  i.e., $c(d,q)$ and those in $L_{d,q}(s)$, are very large even for
  modest $d$.  Perhaps alternative methods for normal approximation
  (e.g.\ \cite{barbour:chen:05, nourdin:peccati:12,
    chen:goldstein:shao:11}) could be employed to improve these terms,
  or even replace $G(d, q, \tau)$ with a universal constant that only
  depends on $(d,q)$.
\end{enumerate}

In Theorem \ref{thm:tv-mv}, the inequalities in \eqref{e:mv-p-s-r} are
the easiest to establish.  On the other hand, $\std R$ and $\std S$
need more careful treatment as they involve $\SRM_\tau$.  By
\eqref{e:mv-p-s-r}, $\std S$ may be bounded via $\std R$.  The main
technical term in Theorem \ref{thm:tv-mv} is $G(d,q,\tau)$.  The next
result, which will be proved in Section \ref{ss:proof-mv}, provides
some simple criteria to bound $\std R$ and $G(d, q, \tau)$.

\begin{prop} \label{prop:exp-L-mv}
  Under $\nu$, the following statements are  true.

  1) Let $c=c(K_\nu)>0$ be a square root of the smallest eigenvalue of
  $K_\nu$.  Then $\sup_S \snorm{\SRM_\tau^{-1} \theta} \le
  1/(c\tau)$.  If
  \begin{align} \label{e:X-moment-mv2}
    \lim_{\tau\to 0+} \ess\sup r_\tau(\theta)/\tau=0,
  \end{align}
  then $\std R=o(1)$ as $\tau\to 0$.

  2) Given $b>1$ and $q\ge 5$, there is $M=M(b,q,K_\nu)>0$, such that,
  if, in addition to \eqref{e:X-moment-mv2},
  \begin{gather}
    \limsup_{\tau\to 0+}
    \nth{\tau^2}\ess\sup
    \int u^2 \ld_\tau(\dd u\gv\theta)< b,
    \label{e:X-moment-mv1}
    \\
    \liminf_{r\to 0+}
    \nth{r^2\ln(1/r)}
    \ess\inf\int u^2\cf{u<r} \ld(\dd u\gv\theta)
    > M,
    \label{e:X-moment-mv3}
  \end{gather}
  then \eqref{e:exp-L-mv} holds and $G(d,q,\tau)= c(d,q)+o(1)$ as
  $\tau\to 0$.  (Note the variate in \eqref{e:X-moment-mv3} is $r$,
  not $\tau$.)
\end{prop}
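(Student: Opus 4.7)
\textbf{Part 1} is a direct Loewner-order manipulation. Since $T_\tau = Y_\tau + \tau Z$ with $Z\sim N(0,K_\nu)$ independent of $Y_\tau$, second-order cumulant matching forces $\Sigma_\tau = \tau^2 K_\nu + \var(Y_\tau)\succeq \tau^2 K_\nu$. Inverting and applying to a unit vector $\theta$ gives $\snorm{\SRM_\tau^{-1}\theta}^2 = \theta'\Sigma_\tau^{-1}\theta \le \tau^{-2}\theta'K_\nu^{-1}\theta \le 1/(c^2\tau^2)$ with $c^2 = \lambda_{\min}(K_\nu)$. Multiplying by $r_\tau(\theta)$, taking the essential supremum under $\nu$, and invoking \eqref{e:X-moment-mv2} yields $\std R\le (1/c)\,\ess\sup r_\tau(\theta)/\tau = o(1)$ as $\tau\to 0+$.

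\textbf{For Part 2}, I would first sharpen the Loewner bound on $\Sigma_\tau$: combining \eqref{e:X-moment-mv1} with \eqref{e:mv-sigma-tau} forces $\ess\sup\int u^2\cpg_\tau(\dd u\gv\theta) < (b-1)\tau^2$ for small $\tau$, so $\var(Y_\tau)\preceq (b-1)\tau^2 K_\nu$ and hence $\Sigma_\tau\preceq b\tau^2 K_\nu$. Since $\SRM_\tau^{-1}K_\nu\SRM_\tau^{-1}$ has the same spectrum as $K_\nu^{1/2}\Sigma_\tau^{-1}K_\nu^{1/2}$, this upgrades to $\lambda_{\min}(\std K)\ge 1/b$ and pins $\snorm{\SRM_\tau^{-1}\theta}$ between $C'/\tau$ and $1/(c\tau)$, where $C' = (b\lambda_{\max}(K_\nu))^{-1/2}$. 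Next, for $\nu$-a.e.\ $\theta$ and $s$ large I would lower-bound the scalar $\int u^2\cf{u<w}\ld(\dd u\gv\theta)$ by $Mc^2\tau^2\ln s/s^2$, with $w=w(1/s,\theta) = r_\tau(\theta)\wedge 1/(s\snorm{\SRM_\tau^{-1}\theta})$. If $w = r_\tau(\theta)$, then \eqref{e:mv-sigma-tau} makes the integral $\ge\tau^2$, which dominates $Mc^2\tau^2\ln s/s^2$ once $s^2\ge Mc^2\ln s$. If $w = 1/(s\snorm{\SRM_\tau^{-1}\theta})$, then \eqref{e:X-moment-mv3} gives $\int u^2\cf{u<w}\ld(\dd u\gv\theta)\ge Mw^2\ln(1/w)$, and the two-sided sandwich $c\tau\le 1/\snorm{\SRM_\tau^{-1}\theta}\le \tau/C'$ forces $Mw^2\ln(1/w)\ge Mc^2\tau^2\bigl(\ln s+\ln(C'/\tau)\bigr)/s^2\ge Mc^2\tau^2\ln s/s^2$ for $\tau<C'$. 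Integrating against $\theta\theta'\nu(\dd\theta)$ gives $M_\tau(1/s)\succeq(Mc^2\tau^2\ln s/s^2)K_\nu$, and hence $\lambda_{\min}\bigl(C_1^2\SRM_\tau^{-1}M_\tau(1/s)\SRM_\tau^{-1}\bigr)\ge C_1^2 Mc^2\ln s/(bs^2)$ via the $\std K$-bound already in hand.

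I would then choose $M = M(b,q,K_\nu)$ so large that $C_1^2 Mc^2/b > 2q+2h(d)+d$. For such $M$ and $s$ large, $\varrho_\tau(1/s)s^2 \ge C_1^2 Mc^2\ln s/b$, so the integrand $s^{2q+2h(d)+d-1}e^{-\varrho_\tau(1/s)s^2}$ decays faster than $s^{-1-\varepsilon}$ for some $\varepsilon>0$, while near $0$ it is dominated by the polynomial alone; this establishes \eqref{e:exp-L-mv}. Since $1/\std R\to\infty$ by Part 1, dominated convergence on the tail integral in the definition of $G(d,q,\tau)$ sends it to $0$, giving $G(d,q,\tau) = c(d,q)+o(1)$. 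The main obstacle is the uniform-in-$\theta$ lower bound on $M_\tau(1/s)$: the regime of $w(1/s,\theta)$ switches with $\theta$, and the two regimes must be reconciled into a single Loewner lower bound that survives the smallest-eigenvalue operation; this reconciliation hinges on the two-sided control of $\snorm{\SRM_\tau^{-1}\theta}$ established in Step~1 and on the monotonicity of $r\mapsto r^2\ln(1/r)$ near $0$.
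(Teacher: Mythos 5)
Both parts of your proposal are correct and follow essentially the same approach as the paper: Loewner inequalities $\tau^2 K_\nu \preceq \Sigma_\tau \preceq b\tau^2 K_\nu$, a case split on whether $w=r_\tau(\theta)\wedge 1/(s\snorm{\SRM_\tau^{-1}\theta})$ equals the first or the second term, and an application of \eqref{e:X-moment-mv3}, then choice of $M$ to make the integral exponent subcritical. The minor divergence is in how you handle the second regime. The paper first uses $1/(s\snorm{\SRM_\tau^{-1}\theta})\ge c\tau/s$ and the monotonicity of $r\mapsto\int u^2\cf{u<r}\ld(\dd u\gv\theta)$ to replace $w$ by the $\theta$-independent radius $c\tau/s$ \emph{before} invoking \eqref{e:X-moment-mv3}; this needs only the one-sided bound $\snorm{\SRM_\tau^{-1}\theta}\le 1/(c\tau)$ and directly yields the uniform bound $g_\tau(s,\theta)\ge M(c\tau/s)^2\ln(s/(c\tau))$ with the $\ln(1/\tau)$ factor intact, packaged as $F_\tau(s)=\min\{1,\,c^2M\ln(s/(c\tau))/s^2\}$. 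You instead apply \eqref{e:X-moment-mv3} at the $\theta$-dependent radius $r=w$ and then factor the resulting $w^2\ln(1/w)$, which forces you to derive and use the two-sided sandwich $C'/\tau\le\snorm{\SRM_\tau^{-1}\theta}\le 1/(c\tau)$ and to discard the $\ln(C'/\tau)$ term. This is valid (both factors are positive and bounded below by their respective endpoints), but it is a detour: the ``reconciliation'' you flag as the main obstacle is exactly what the paper's reduction to the $\theta$-independent radius sidesteps, and the lower bound $\snorm{\SRM_\tau^{-1}\theta}\ge C'/\tau$ is never needed in the paper's argument. The conclusion is unaffected; you could simplify by switching to the paper's order of operations.
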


\addtocounter{example}{-1}
\begin{example}[Continued]\rm
  Note the assumption in \eqref{e:mv-stable-a}.  By $r_\tau(\theta) =
  J_4(a(\theta)) \tau^{2/(2-a(\theta))}$, \eqref{e:X-moment-mv2} is
  satisfied.  By $\int u^2\ld_\tau(\dd u\gv\theta) = J_0(a(\theta))
  r_\tau(\theta)^{2-a(\theta)} = J(a(\theta)) \tau^2$ for some $J(a)
  \in C(0,2)$, \eqref{e:X-moment-mv1} is satisfied.  Since $\int u^2
  \cf{u<r}\, \ld(\dd u\gv\theta) = r^{2-a(\theta)}/(2-a(\theta))$,
  then \eqref{e:X-moment-mv3} is satisfied no matter the value of $M$.
  Thus we can apply Proposition \ref{prop:exp-L-mv}.  Since
  $p_\tau(\theta) = \pi(a(\theta))>0$ and $(p_\tau(\theta)+3)
  s_\tau(\theta) / r_\tau(\theta) = (p_\tau(\theta)+3)J_1(a)<1$,
  the conditions in
  \eqref{e:mv-p-s-r} are satisfied.  The last condition we need to
  check that for small $\tau>0$, $\std S\le 1$ in 
  \eqref{e:R-S-std}.  However, by \eqref{e:mv-p-s-r} and $\ess\sup
  p_\tau(\theta) = \ess\sup \pi(a(\theta))<\infty$, $\std S = O(\std
  R) = o(1)$.

  We now can apply Theorem \ref{thm:tv-mv}.  By $\snorm x_1 \le
  \sqrt{d} \snorm x$ for $x\in\Reals^d$ and by $\snorm{\SRM_\tau^{-1}
    \theta} \le 1/(c\tau)$, where $c$ is the constant in Proposition
  \ref{prop:exp-L-mv},
  \begin{align*}
    \int\snorm{u\SRM_\tau^{-1}\theta}_1^q \ld_\tau(\dd u,
    \dd\theta)
    &\le
    d^{q/2} \int u^q \snorm{\SRM_\tau^{-1}\theta}_2^q
    \ld_\tau(\dd u, \dd\theta)
    \\
    &\le
    d^{q/2} (c\tau)^{-q} \int u^q \ld_\tau(\dd u,\dd\theta)
    \\
    &=
    (c\tau/\sqrt{d})^{-q}
    \int \frac{r_\tau(\theta)^{q-a(\theta)}}{q-a(\theta)}\,
    \nu(\dd\theta)
    \le
    c' \tau^{(q-2)a_0/(2-a_0)},
  \end{align*}
  where $c'$ is a constant independent of $\tau$.  For $\int \snorm
  {u\SRM_\tau^{-1} \theta}_1^q \cpg_\tau(\dd u,\dd\theta)$, a similar
  bound holds.  Combining these bounds and Proposition
  \ref{prop:exp-L-mv}, $\dtv(X, \Delta_\tau+T_\tau) =
  O(\tau^{(q-2)a_0/(2-a_0)})$, where $q=10$.

  Finally, we compare the computational complexity of the above PGN
  approximation and the normal approximation for $X$  \cite{cohen:07}.
  To make a reasonable comparison, assume $\ld$ is direction
  independent, so that $\ld(\dd u, \dd\theta) = u^{-a-1} \nu(\dd
  \theta)$, where $a\in (0,2)$ is a constant.  Then given $\tau$, both
  approximations use $r_\tau = J_4(a) \tau^{2/(2-a)}$ as the cut-off
  value for jump size and sample $\Delta_\tau$, which involves $N_1\sim
  \dpois(a^{-1}(r_\tau^{-a} - r_0^{-a})\nu(S))$ events.  However, the
  PGN approximation also samples $Y_\tau$, which involves another
  $N_2\sim \dpois(J(a) r_\tau^{-a} \nu(S))$ events.  As $\tau\to 0$,
  $N_2 = O_p(N_1)$, and hence the approximations have the same order
  of complexity.  On the other hand, by Theorem \ref{thm:tv-mv}, the
  $\dtv$ between $X$ and $\Delta_\tau + T_\tau$ is
  $O(\tau^{8a/(2-a)})$, whereas the $\dtv$ between $X$ and its normal
  approximation is $O(\tau^{2a/(2-a)})$.  Therefore, at least
  asymptotically, the PGN approximation has higher but the same order
  of computational complexity as the normal approximation, and the
  extra complexity may lead to significant improvement in error rate
  when $a$ is not too small, e.g., $a>1/8$.
  \qed
\end{example}

\section{Proofs of main results} \label{sec:proof-main}
\subsection{Univariate case} \label{ss:proof-1d}
To prove Theorem \ref{thm:tv}, we can assume that
\begin{align} \label{e:exp-L}
  \int_0^\infty t^{2(q+1)} e^{-2 L(t,r)}\,\dd t<\infty.
\end{align}
Otherwise, $Q_{q+1}=\infty$ and the result is trivial.  We need the
following two lemmas.
\begin{lemma} \label{lemma:rdf}
  1) Let $\xi$ be i.d.\ with $\che_\xi(t) = \int (1+ \iunit t u -
  e^{\iunit t u}) \,\nu(\dd u)$ and $\mean|\xi|^j < \infty$ for
  all $j\ge 1$.  Given $\rx>0$, let $Z\sim N(0,\rx^2)$ be independent
  of $\xi$.  Then $\chf_{\xi + Z}\in\rdf(\Reals)$.

  2) Under condition \eqref{e:exp-L}, $f_{X_r}\in C^q(\Reals)$,
  and for $0\le j\le q$, $f_{X_r}\Sp j(x)\to 0$ as $|x|\toi$.
\end{lemma}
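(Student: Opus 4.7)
I plan to factor $\chf_{\xi+Z}(t) = \chf_\xi(t)\, e^{-\rx^2 t^2/2}$ and control each factor. Because $\mean|\xi|^j<\infty$ for every $j\ge 1$, differentiating under the expectation gives $\chf_\xi\Sp j(t) = \mean[(\iunit\xi)^j e^{\iunit t\xi}]$, so $\chf_\xi\in C^\infty(\Reals)$ with $\sup_t |\chf_\xi\Sp j(t)|\le \mean|\xi|^j$. The Leibniz rule then expresses each $n$-th derivative of $\chf_{\xi+Z}$ as a finite linear combination of products $\chf_\xi\Sp j(t)\, P_{n-j}(t)\, e^{-\rx^2 t^2/2}$ with $P_{n-j}$ polynomial, which is bounded by a polynomial times a Gaussian and therefore rapidly decreasing. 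Thus $\chf_{\xi+Z}\in\rdf(\Reals)$.

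\textbf{Part 2, pointwise bound on $\chf_{X_r}$.} The critical step will be the estimate $|\chf_{X_r}(t)|\le e^{-L(t,r)}$ on $\{|t|\ge 1/r\}$. Starting from $\Re\che_{X_r}(t) = \int (1-\cos(tu))\,\ld_r(\dd u)$, I would restrict the integrand to $\{|u|\le 1/|t|\}$ and apply the elementary inequality $1-\cos y = 2\sin^2(y/2)\ge (C_1^2/2) y^2$ for $|y|\le 1$, which follows from the monotonicity of $\sin z/z$ on $[0,1]$ (value $C_1=\sin 1$ at $z=1$). For $|t|\ge 1/r$, so $1/|t|\le r$, this yields $\Re\che_{X_r}(t)\ge (C_1^2 t^2/2)\kappa_{2,X_{1/|t|}}\ge L(t,r)$, the last inequality holding by the very definition of $L$ as the minimum of this quantity with $(t^2/2)\sigma(r)^2$.

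\textbf{Part 2, Fourier inversion.} Combined with the trivial bound $|\chf_{X_r}|\le 1$ on the bounded set $\{|t|<1/r\}$, condition \eqref{e:exp-L} and one Cauchy--Schwarz step will yield $\int|t|^q|\chf_{X_r}(t)|\,\dd t<\infty$: writing $|t|^q e^{-L(t,r)} = [(1+t^2)^{1/2}|t|^q e^{-L(t,r)}]\cdot(1+t^2)^{-1/2}$ and applying CS reduces matters to $\int(|t|^{2q}+|t|^{2q+2})e^{-2L(t,r)}\,\dd t<\infty$, which is immediate from \eqref{e:exp-L}. Fourier inversion then gives $f_{X_r}(x)=(2\pi)^{-1}\int e^{-\iunit tx}\chf_{X_r}(t)\,\dd t$; dominated convergence permits $q$-fold differentiation under the integral, producing continuous derivatives $f_{X_r}\Sp j(x)=(2\pi)^{-1}\int (-\iunit t)^j e^{-\iunit tx}\chf_{X_r}(t)\,\dd t$ for $0\le j\le q$, and applying Riemann--Lebesgue to $(-\iunit t)^j\chf_{X_r}(t)\in L^1(\Reals)$ forces $f_{X_r}\Sp j(x)\to 0$ as $|x|\toi$. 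The only subtle point will be reconciling the two branches of the minimum defining $L(t,r)$: the $\sigma(r)^2$ branch is present for a companion estimate involving the approximant $T_r$ (which has a Gaussian piece), and it merely sharpens the bound for $X_r$ without creating any regime in which the bound fails; the remainder is standard Fourier-analytic bookkeeping.
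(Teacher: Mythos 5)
Your proposal is correct and follows essentially the same route as the paper: part~1 rests on the factorization $\chf_{\xi+Z}=\chf_\xi\, e^{-\rx^2 t^2/2}$ (you apply Leibniz to the product where the paper expands $e^{-\che_\xi-\rx^2 t^2/2}$ by the chain rule, an inessential difference), and part~2 uses the identical restriction of $\Re\che_{X_r}$ to $\{|u|<1/|t|\}$, the inequality $1-\cos y\ge (C_1^2/2)y^2$ for $|y|\le 1$, Cauchy--Schwarz against $(1+t^2)^{-1/2}$, and Fourier inversion plus Riemann--Lebesgue (which the paper simply invokes as Proposition~28.1 of Sato). One small infelicity in your closing remark: taking the $\min$ with $\sigma(r)^2$ in the definition of $L(t,r)$ \emph{weakens} rather than sharpens the lower bound on $\Re\che_{X_r}$; this is harmless here because a smaller $L$ still yields a valid upper bound $|\chf_{X_r}|\le e^{-L}$ on $\{|t|\ge 1/r\}$, which is all that part~2 requires.
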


The second lemma is as follows.  Note that it does not require
matching of cumulants.
\begin{lemma} \label{lemma:L1}
  Let $T_r$ be defined as in Theorem \ref{thm:tv}, such that $s(r) <
  1/(p+3)$ and $\sigma(r)>0$.  Fix $\rx>0$.  Given $A$, $B\ge 0$
  with $A+B=1$, let $W$ be an i.d.\ random variable with 
  \begin{align*}
    \che_W(t) = A \che_{X_r}(t) + B \che_{T_r}(t) + \rx^2
    t^2/2.
  \end{align*}
  Let $\xi=W/\nu$, where $\nu = \sqrt{A \kappa_{2, X_r} + B\kappa_{2,
      T_r}}$.  Then $f_\xi\in \rdf(\Reals)$ and for $j\ge 1$,
  \begin{align*}
    \int |f\Sp j_\xi(x)|\,\dd x \le j I_{j-1}(r) +
    I_j(r) + (1+\rx^2/\nu^2) I_{j+1}(r),
  \end{align*}
  where for $j\ge 0$, $I_j(r)\ge 0$ such that
  \begin{align*}
    I_j(r)^2 = \nu^{2j+1} \Sbr{
      \frac{\Gamma(j+1/2)}{2 D(r)^{2j+1}}
      + \int_{1/r}^\infty t^{2j} e^{-2 H(t,r)}\,\dd t
    },
  \end{align*}
  with
  \begin{align*}
    D(r)^2
    &=
    A C_1^2\kappa_{2, X_r}
    + B (C_1^2 C_2 \kappa_{2, Y_r} + \sigma(r)^2),
    \\
    H(t,r)
    &=
    \frac{A C_1^2 t^2}{2}\int_{u<1/|t|} u^2\, \ld(\dd u)
    + \frac{B\sigma(r)^2 t^2}{2}.
  \end{align*}
\end{lemma}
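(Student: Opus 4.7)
The plan is to bound $\int|f_\xi\Sp j|\,\dd x$ via Fourier inversion combined with Cauchy--Schwarz, reducing the problem to uniform $L^2$ control of $t^k\chf_\xi$ for $k\in\{j-1,j,j+1\}$, and then to obtain these $L^2$ estimates from a direct quadratic lower bound on $\Re\che_W$. The rapid decrease $f_\xi\in\rdf(\Reals)$ itself should follow from part 1) of Lemma \ref{lemma:rdf} applied to $W$: its L\'evy measure $A\ld_r+B\cpg_r$ has moments of all orders (since $\ld_r$ has compact support and $\cpg_r$ has an exponentially decaying density), and its Gaussian variance $B\sigma(r)^2+\rx^2$ is strictly positive. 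Hence $\chf_W\in\rdf(\Reals)$, and since $\chf_\xi(t)=\chf_W(t/\nu)$ is only a dilation, also $\chf_\xi\in\rdf(\Reals)$; Fourier inversion then delivers $f_\xi\in\rdf(\Reals)$ and justifies all subsequent operations.

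Cauchy--Schwarz with the weight $(1+x^2)^{-1/2}$, combined with Plancherel and the identities $\widehat{f_\xi\Sp j}(t)=(-\iunit t)^j\chf_\xi(t)$ and $\widehat{x f_\xi\Sp j}(t)=-\iunit\Pd_t[(-\iunit t)^j\chf_\xi(t)]$, yields
\[
\int|f_\xi\Sp j(x)|\,\dd x \le \sqrt{\pi}\Grp{\|f_\xi\Sp j\|_2+\|x f_\xi\Sp j\|_2} \le \nth{\sqrt 2}\Grp{j\|t^{j-1}\chf_\xi\|_2 + \|t^j\chf_\xi\|_2 + \|t^j\chf_\xi'\|_2}.
\]
For the last term I use $\chf_\xi'(t)=-\che_W'(t/\nu)\chf_\xi(t)/\nu$ together with $|1-e^{\iunit y}|\le|y|$ inside $\che_{X_r}'$ and $\che_{Y_r}'$ (and the trivial derivative of $\rx^2 t^2/2$) to get $|\che_W'(t)|\le|t|(\nu^2+\rx^2)$, whence $\|t^j\chf_\xi'\|_2\le(1+\rx^2/\nu^2)\|t^{j+1}\chf_\xi\|_2$. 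Defining $I_j(r)=\|t^j\chf_\xi\|_2/\sqrt{2}$ then reproduces the claimed sum $jI_{j-1}(r)+I_j(r)+(1+\rx^2/\nu^2)I_{j+1}(r)$.

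To recover the asserted form of $I_j(r)^2$, I substitute $t=\nu s$ to obtain $I_j(r)^2=\nu^{2j+1}\int_0^\infty s^{2j}e^{-2\Re\che_W(s)}\,\dd s$ and split the integral at $s=1/r$. On $[0,1/r]$, the inequality $1-\cos y\ge C_1^2 y^2/2$ for $|y|\le 1$ (with $C_1=\sin 1$) yields $\Re\che_{X_r}(s)\ge C_1^2 s^2\kappa_{2,X_r}/2$, while the same inequality applied to $\cpg_r$ restricted to $\{u<1/|s|\}$ plus the hypothesis $s(r)<r/(p+3)$ gives
\[
\int_0^{1/|s|}u^2\cpg_r(\dd u)\big/\kappa_{2,Y_r}=\pr\{\xi_{p+3}\le 1/(|s|s(r))\}\ge\pr\{\xi_{p+3}\le p+3\}\ge C_2,
\]
so $\Re\che_{T_r}(s)\ge(C_1^2 C_2\kappa_{2,Y_r}+\sigma(r)^2)s^2/2$. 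Adding these yields $2\Re\che_W(s)\ge D(r)^2 s^2$, and the Gaussian integral formula supplies the $\Gamma(j+1/2)/(2D(r)^{2j+1})$ term. On $[1/r,\infty)$, the same localization of $\Re\che_{X_r}$ --- now with $u<1/|s|\le r$ --- combined with $\Re\che_{T_r}(s)\ge\sigma(r)^2 s^2/2$, produces $\Re\che_W(s)\ge H(s,r)$ and hence the integral term.

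The main obstacle is the small-$s$ lower bound on $\Re\che_{Y_r}$ in terms of $C_2\kappa_{2,Y_r}$: the Gamma-type density of $\cpg_r$ must be converted into a uniform constant, and this is precisely what the definition $C_2=\inf_{p>0}\pr\{\xi_p\le p\}$ and the structural constraint $s(r)<r/(p+3)$ are engineered to deliver. The symmetric case $T_r=T_r\Sp 1-T_r\Sp 2$ introduces no new ingredients, since $\Re\che_{T_r}=2\Re\che_{T_r\Sp 1}$ only strengthens the lower bounds used above.
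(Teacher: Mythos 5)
Your proof is correct and follows essentially the same route as the paper's: Cauchy--Schwarz with the weight $(1+x^2)^{-1}$ plus Plancherel (the paper isolates this as Lemma \ref{lemma:L1-deriv}), the elementary cosine bound $1-\cos y\ge C_1^2y^2/2$ on $|y|\le 1$ together with the Gamma-probability identification of $C_2$ using $s(r)<r/(p+3)$ to get the low-frequency Gaussian bound, the trivial $\Re\che_{Y_r}\ge 0$ to get the high-frequency bound $H$, and $|1-e^{\iunit y}|\le|y|$ for $\|t^j\chf_\xi'\|_2\le(1+\rx^2/\nu^2)\|t^{j+1}\chf_\xi\|_2$. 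The only cosmetic difference is that you rescale at the outset via $t=\nu s$ rather than carrying $t/\nu$ through as in the paper.
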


To prove Theorem \ref{thm:tv}, by $\dtv(X, \Delta_r+T_r) =
\dtv(\Delta_r+X_r, \Delta_r + T_r) \le \dtv(X_r, T_r)$, it suffices to
show \eqref{e:pgn-dtv} for $\dtv(X_r, T_r)$.  Let $Z$ and $Z'$ be
i.i.d.\ $N(0,1)$ random variables independent of $X_r$ and $T_r$.  Fix
$\rx>0$.  Letting $h$ be a measurable function with $\snorm
h_\infty\le 1$, our first goal is to bound
\begin{align*}
  \Delta_\rx =\mean[h(X_r+\rx Z)-h(T_r+\rx Z')].
\end{align*}
For $n\ge 2$, we have representations
\begin{align*}
  X_r + \rx Z 
  =
  U_2+\cdots+ U_{n+1}, 
  \quad
  T_r + \rx Z'
  = \cum V n,
\end{align*}
(note the index of $U$ starts at 2), where $U_i$ and $V_j$,
$i,j=1,\ldots, n+1$, are independent i.d.\ random variables with
\begin{align*}
  \che_{U_i}(t) =
  n^{-1} \che_{X_r + \rx Z}(t),\quad
  \che_{V_i}(t) =
  n^{-1} \che_{T_r + \rx Z'}(t).
\end{align*}

For $k=1,\ldots, n+1$, let
\begin{align*}
  W_k
  = 
  \sum_{1\le j<k} V_j + \sum_{k<j\le n+1} U_j,
\end{align*}
and $g_k(x) = \mean h(W_k+x)$.  By $X_r + \rx Z = W_1$ and $T_r + \rx
Z' = W_{n+1}$, it is clear that
\begin{align}
  |\Delta_\rx|
  &=
  |g_1(0) - g_{n+1}(0)|
  \nonumber\\
  &\le
  |\mean[g_1(U_1) - g_{n+1}(V_{n+1})]|
  +|\mean[g_1(U_1) - g_1(0)]|
  + |\mean[g_{n+1}(V_{n+1})-g_{n+1}(0)]|.
  \label{e:E1}
\end{align}

We bound the expectations on the last line separately.  By $W_k + V_k
= W_{k+1} + U_{k+1}$,
\begin{align*}
  h(W_1+U_1) - h(W_{n+1}+V_{n+1})
  =
  \sum_{k=1}^{n+1} [h(W_k + U_k) - h(W_k + V_k)].
\end{align*}
By independence, $\mean h(W_k + U_k) = \mean g_k(U_k)$ and $\mean
h(W_k + V_k) = \mean g_k(V_k)$.  Therefore, taking expectation on both
sides of the displayed identity yields
\begin{align}
  \mean[g_1(U_1) - g_{n+1}(V_{n+1})]
  =
  \sum_{k=1}^{n+1} \mean[g_k(U_k) - g_k(V_k)].
  \label{e:E2}
\end{align}
Denote $\nu=\kappa_{2,X_r}^{1/2}$.  Let $\xi_k = W_k/\nu$.  By Lemma
\ref{lemma:rdf}, $f_{\xi_k}\in \rdf(\Reals)$.  As a result,
\begin{align*}
  g_k(x) &= \int h(\nu u + x) f_{\xi_k}(u)\,\dd u
  = \int h(\nu u) f_{\xi_k}(u-x/\nu)\,\dd u
\end{align*}
is smooth.  By Taylor expansion around 0,
\begin{align*}
  g_k(U_k) - g_k(V_k) = \sum_{j=1}^{q-1}
  \frac{g_k\Sp j(0)}{j!} (U_k^j -V_k^j) +
  \nth{q!} [g_k\Sp q(\theta(U_k) U_k) U_k^q
  - g_k\Sp q(\theta(V_k) V_k) V_k^q],
\end{align*}
where $\theta(x)\in [0,1]$.  By assumption, $\kappa_{j, X_r} =
\kappa_{j, T_r}$ for $1\le j < q$.  Since $\kappa_{j, U_k} = n^{-1}
\kappa_{j, X_r+\rx Z} = n^{-1}(\kappa_{j, X_r} + \rx^2 \cf{j=2})$, and
likewise $\kappa_{j, V_k} = n^{-1} (\kappa_{j, T_r} + \rx^2
\cf{j=2})$, then $\kappa_{j,U_k} = \kappa_{j, V_k}$ for $1\le j<q$.
As a result, $\mean U_k^j = \mean V_k^j$ for $1\le j<q$ and hence
\begin{align*}
  \mean[g_k(U_k) - g_k(V_k)]
  &=
  \nth{q!} \mean [g_k\Sp q(\theta(U_k) V_k) U_k^q
    - g_k\Sp q(\theta(V_k) V_k) V_k^q]
\end{align*}
giving
\begin{align}
  |\mean[g_k(U_k) - g_k(V_k)]|
  &\le
  \frac{\snorm{g_k\Sp q}_\infty}{q!}
  [\mean |U_k|^q + \mean |V_k|^q]. 
  \label{e:E-g}
\end{align}
Since $g_k\Sp q(x) = (-\nu)^{-q} \int h(\nu u) f_{\xi_k}\Sp
q(u-x/\nu)\,\dd u$, then
\begin{align} \label{e:g-derivative}
  \snorm{g_k\Sp q}_\infty
  \le
  \nu^{-q} \int |f_{\xi_k}\Sp q(u)|\,\dd u.
\end{align}
Because
\begin{align*}
  \che_{W_k}(t)
  &=
  (k-1) \che_{V_1}(t) + (n+1-k)\che_{U_1}(t) \\
  &
  = \frac{n+1-k}{n} \che_{X_r}(t) 
  +
  \frac{k-1}{n} \che_{T_r}(t)
  + \frac{\rx^2 t^2}{2},
\end{align*}
we can apply Lemma \ref{lemma:L1} with $\nu^2 = \kappa_{2,X_r} =
\kappa_{2,T_r}$, $A=(n+1-k)/n$ and $B=(k-1)/n$ therein.  By
definition of $D(r)$ and $H(t,r)$ in Lemma \ref{lemma:L1},
\begin{align*}
  D(r)^2
  &=
  A C_1^2\nu^2
  + B (C_1^2 C_2 \kappa_{2, Y_r} + \sigma(r)^2)
  \ge
  A C_1^2\nu^2
  + B C_1^2 C_2 (\kappa_{2, Y_r} + \sigma(r)^2)
  \ge C_1^2 C_2 \nu^2
\end{align*}
and
\begin{align*}
  H(t,r)
  &=
  \frac{A C_1^2 t^2}{2}\int_{u<1/|t|} u^2\, \ld(\dd u)
  +\frac{B\sigma(r)^2 t^2}{2}
  \\
  &\ge
  \frac{(A+B) t^2}{2}
  \min\Cbr{
    C_1^2 \int_{u<1/|t|} u^2\, \ld(\dd u), \ \sigma(r)^2
  } 
  =
  L(t, r).
\end{align*}
By definition of $Q_j(r)$ in Theorem \ref{thm:tv} and definition of
$I_j(r)$ in Lemma \ref{lemma:L1}, $I_j(r)^2\le Q_j(r)^2$.   By
condition \eqref{e:exp-L}, $Q_j(r)^2<\infty$ for $0\le j\le q+1$.
Thus \eqref{e:g-derivative} and Lemma \ref{lemma:L1} give
\begin{align*}
  \snorm{g_k\Sp q}_\infty
  &\le
  \nu^{-q} \Sbr{
    q Q_{q-1}(r) + Q_q(r) + (1+\rx^2/\nu^2)Q_{q+1}(r)
  }
  := M_\rx < \infty.
\end{align*}
Since $M_\rx$ is independent of $k$, by \eqref{e:E2} and
\eqref{e:E-g},
\begin{align*}
  |\mean g_1(U_1)-\mean g_{n+1}(V_{n+1})|
  &\le
  \sum_{k=1}^{n+1} |\mean[g_k(U_k) - g_k(V_k)]| 
  \le
  \frac{M_\rx}{q!} \sum_{k=1}^{n+1} (\mean |U_k|^q + \mean |V_k|^q).
\end{align*}
Since the $\levy$ measure of $X_r$ has bounded support, $\mean|X_r +
\rx Z|^q<\infty$.  Meanwhile, from \eqref{e:cum-X-Y}, $\mean
|Y_r+\rx Z|^q<\infty$.  Then by Lemma 3.1 in \cite{asmussen:01},
\begin{align*}
  \sum_{k=1}^{n+1} \mean |U_k|^q \to |\kappa|_{q, X_r + \rx Z}
  = |\kappa|_{q, X_r}, \quad 
  \sum_{k=1}^{n+1} \mean |V_k|^q \to |\kappa|_{q, T_r + \rx Z'}
  = |\kappa|_{q, Y_r}.
\end{align*}
As a result,
\begin{align} \label{e:E3}
  \limsup_{n\toi} |\mean g_1(U_1)- \mean g_{n+1}(V_{n+1})| \le
  \frac{M_\rx}{q!}(|\kappa|_{q, X_r} + |\kappa|_{q, Y_r}).
\end{align}

On the other hand, $|\mean[g_1(U_1)-g_1(0)]| \le \snorm{g_1'}_\infty
\mean|U_1|$.  Since $\mean U_1^2 = \var(U_1) = \var(X_r+\rx Z)/n$, by
Cauchy-Schwartz inequality, as $n\toi$, $\mean|U_1|\to 0$.  As in
\eqref{e:g-derivative}, $\snorm{g_1'}_\infty \le \nu^{-1} \int
|f_{\xi_1}'(u)|\,\dd u$.  By $f_{\xi_1}\in \rdf(\Reals)$,
$\snorm{g_1'}_\infty < \infty$.  Since $g_1(x) = \mean h(X_r+\rx Z+x)$
is a function independent of $n$, it follows that
$\mean[g_1(U_1)-g_1(0)]\to 0$ as $n\toi$.  Likewise,
$\mean[g_{n+1}(V_{n+1})-g_{n+1}(0)]\to 0$.  Together with \eqref{e:E1}
and \eqref{e:E3}, this implies
\begin{align*}
  |\mean h(X_r+\rx Z)- \mean h(T_r + \rx Z')| \le
  \frac{M_\rx}{q!}(|\kappa|_{q, X_r} + |\kappa|_{q, Y_r}).
\end{align*}

Let $G\subset \Reals$ be the union of a finite number of $(a_i,b_i)$
and $h(x)=\cf{x\in G}$.  By 2) of Lemma \ref{lemma:rdf}, $\pr\{X_r =
a_i$ or $b_i$, some $i\}=0$.  Then $h(X_r + \rx Z)- h(X_r)\to 0$ a.s.\
as $\rx\to 0+$.  On the other hand, since $T_r$ is the sum of $Y_r$
and an independent nonzero Gaussian random variable, by 1) of Lemma
\ref{lemma:rdf}, $f_{T_r}\in \rdf(\Reals)$.  As a result, $h(T_r + \rx
Z')- h(T_r)\to 0$ a.s.\ as $\rx\to 0+$.  Also, $M_\rx \to
M:=\nu^{-q}[q Q_{q-1}(r) + Q_q(r) + Q_{q+1}(r)]$.  Thus, by dominated
convergence,
\begin{align*}
  |\pr\{X_r\in G\}-\pr\{T_r\in G\}|
  \le \frac{M}{q!}(|\kappa|_{q, X_r} + |\kappa|_{q, Y_r}).
\end{align*}

Let $A\subset\Reals$ be measurable.  Given $\delta>0$, there is $R>0$,
such that, letting $B=A\cap (-R, R)$, $\pr\{X_r\in A\setminus B\} +
\pr\{T_r\in A\setminus  B\}<\delta$.   Then there is an open $G
\supset B$ with $\ell(G\setminus B)<\delta$, where $\ell$ is the
Lebesgue measure.  $G$ is the union of at most countably many
disjoint open intervals $(a_i, b_i)$.  Let $G_k = \cup_{i=1}^k
(a_i, b_i)$.  Then
\begin{align*}
  &
  |\pr\{X_r\in A\}-\pr\{T_r\in A\}|
  \\
  &\le
  |\pr\{X_r\in G_k\} - \pr\{T_r\in G_k\}| 
  + \pr\{X_r\in G\setminus G_k\} + \pr\{T_r\in G\setminus G_k\}
  \\
  &\quad
  + \pr\{X_r\in A\setminus G\} + \pr\{T_r\in A\setminus G\}
  + \pr\{X_r\in G\setminus A\} + \pr\{T_r\in G\setminus A\}
  \\
  &\le
  \frac{M}{q!}(|\kappa|_{q, X_r} + |\kappa|_{q, Y_r})
  + \pr\{X_r\in G\setminus G_k\} + \pr\{T_r\in G\setminus G_k\}
  \\
  &\quad
  + \pr\{X_r\in A\setminus B\} + \pr\{T_r\in A\setminus B\}
  +  (\snorm{f_{X_r}}_\infty + \snorm{f_{T_r}}_\infty)
  \,\ell(G\setminus B)
  \\
  &\le
  \frac{M}{q!}(|\kappa|_{q, X_r} + |\kappa|_{q, Y_r})
  + \pr\{X_r\in G\setminus G_k\} + \pr\{T_r\in G\setminus G_k\}
  \\
  &\quad
  +(1+\snorm{f_{X_r}}_\infty + \snorm{f_{T_r}}_\infty)\delta.
\end{align*}
By Lemma \ref{lemma:rdf}, $\snorm{f_{X_r}}_\infty +
\snorm{f_{T_r}}_\infty<\infty$.  Then, letting $k\toi$ followed by
$\delta\to 0$ yields
\begin{align*}
  |\pr\{X_r\in A\}-\pr\{T_r\in A\}]| \le
  \frac{M}{q!} (|\kappa|_{q, X_r} + |\kappa|_{q, Y_r}).
\end{align*}
This completes the proof of Theorem \ref{thm:tv}.

\subsection{Multivariate case} \label{ss:proof-mv}
We shall prove Theorem \ref{thm:tv-mv} by standardizing the random
variables involved.  First, we have the following simple result.
\begin{lemma} \label{lemma:standard}
  Let $X$ be i.d.\ with $\levy$ measure $\ld(\dd u, \dd\theta)$ in
  polar coordinates and $\SRM$ be a nonsingular matrix.  Then
  $\SRM^{-1} X$ has $\levy$ measure $\ld(J_\SRM^{-1}(\cdot))$ and
  variance $\SRM^{-1}\var(X) \SRM^{-1}$, where
  \begin{align*}
    J_\SRM(u,\theta) = \Grp{
      \snorm{\SRM^{-1}\theta} u, \,
      \frac{\SRM^{-1}\theta}{\snorm{\SRM^{-1}\theta}}
    }, \quad (u,\theta)\in (0,\infty)\times S.
  \end{align*}
  Furthermore, $J^{-1}_\SRM(u,\theta) = J_{\SRM^{-1}}(u,\theta)
  = (\snorm{\SRM\theta} u,\, \SRM\theta/\snorm{\SRM\theta})$.
\end{lemma}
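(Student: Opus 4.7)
The plan is to reduce the claim to a direct manipulation of the Lévy–Khintchine exponent in polar form \eqref{e:ID}. Using $\ip{t}{\SRM^{-1}X}=\ip{(\SRM^{-1})'t}{X}$ one has $\che_{\SRM^{-1}X}(t)=\che_X((\SRM^{-1})'t)$; in the symmetric setting relevant to the paper (where $\SRM=\SRM_\tau=\Sigma_\tau^{1/2}$, which is implicit in the stated variance formula) this is $\che_X(\SRM^{-1}t)$, and substituting into \eqref{e:ID} yields
\[
\che_{\SRM^{-1}X}(t)
=\int_S\int_0^\infty\bigl(1+\iunit u\ip{t}{\SRM^{-1}\theta}-e^{\iunit u\ip{t}{\SRM^{-1}\theta}}\bigr)\,\ld(\dd u,\dd\theta).
\]

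Next I would factor $\SRM^{-1}\theta$ into magnitude and direction by writing
\[
u\ip{t}{\SRM^{-1}\theta}=\bigl(u\Norm{\SRM^{-1}\theta}\bigr)\,\Ang{t,\SRM^{-1}\theta/\Norm{\SRM^{-1}\theta}},
\]
so that the integrand depends on $(u,\theta)$ only through $(u',\theta'):=J_\SRM(u,\theta)$. A standard change of variables then recasts $\che_{\SRM^{-1}X}(t)$ as $\int(1+\iunit u'\ip{t}{\theta'}-e^{\iunit u'\ip{t}{\theta'}})\,(\ld\circ J_\SRM^{-1})(\dd u',\dd\theta')$, and matching this with \eqref{e:ID} identifies the Lévy measure of $\SRM^{-1}X$ as $\ld\circ J_\SRM^{-1}=\ld(J_\SRM^{-1}(\cdot))$. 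The drift term $1+\iunit u\ip{t}{\theta}$ transforms cleanly because the linear map acts inside the inner product, so no extra correction is needed and the transformed characteristic exponent is in the same canonical form as \eqref{e:ID}.

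For the inverse formula, starting from $(u',\theta')=J_\SRM(u,\theta)$ I would solve algebraically: the identity $\theta'=\SRM^{-1}\theta/\Norm{\SRM^{-1}\theta}$ gives $\SRM\theta'=\theta/\Norm{\SRM^{-1}\theta}$, whence $\Norm{\SRM\theta'}=1/\Norm{\SRM^{-1}\theta}$ (using $\Norm{\theta}=1$), so $\theta=\SRM\theta'/\Norm{\SRM\theta'}$ and $u=u'\Norm{\SRM\theta'}$. These formulas are exactly $J_{\SRM^{-1}}(u',\theta')$ obtained by substituting $\SRM$ with $\SRM^{-1}$ in the definition of $J$, proving the final assertion.

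The variance identity is then $\var(\SRM^{-1}X)=\SRM^{-1}\var(X)(\SRM^{-1})'=\SRM^{-1}\var(X)\SRM^{-1}$ by bilinearity of covariance and symmetry of $\SRM$. There is no real obstacle here: the only point that needs care is verifying that $J_\SRM$ is a bijection on $(0,\infty)\times S$ so that the change-of-variables step is valid, but that is immediate from the explicit inverse just computed.
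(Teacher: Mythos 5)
Your proof is correct and follows essentially the same route the paper takes: substitute into the polar Lévy--Khintchine form, factor $\SRM^{-1}\theta$ into magnitude and direction to expose $J_\SRM$, and identify the transformed Lévy measure as $\ld\circ J_\SRM^{-1}$, then compute the explicit inverse. You are in fact a bit more careful than the paper, which writes $\che_{\SRM^{-1}X}(t)=\che_X(\SRM^{-1}t)$ and $\ip{\SRM^{-1}t}{\theta}=\ip t{\SRM^{-1}\theta}$ without comment; as you note, the step $\che_{\SRM^{-1}X}(t)=\che_X((\SRM^{-1})'t)=\che_X((\SRM')^{-1}t)$ already gives $\ip t{\SRM^{-1}\theta}$ for any nonsingular $\SRM$ (so the Lévy-measure claim holds in general), while the variance formula as displayed and the paper's shorthand both tacitly use symmetry of $\SRM$ — which is harmless since in application $\SRM=\Sigma_\tau^{1/2}$.
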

\begin{proof}
  Denote $h(z) = 1 + z - e^z$.  For any $t\in\Reals^d$,
  \begin{align*}
    \che_{\SRM^{-1} X}(t)
    = \che_X (\SRM^{-1} t)
    &=
    \int h(\iunit \ip {\SRM^{-1}t}\theta u)
    \,\ld(\dd u,\, \dd\theta)
    \\
    &=
    \int h(\iunit\ip t{\SRM^{-1}\theta} u) \,\ld(\dd u,\, \dd\theta)
    =
    \int h(\iunit\ip t\omega v)\,\ld(\dd u,\, \dd\theta),
  \end{align*}
  where $(v,\omega) = J_\SRM(u,\theta)$.  Then the lemma easily
  follows.
\end{proof}

Recall $\SRM_\tau = \Sigma_\tau^{1/2}$ and $\std K = \tau^2
\SRM_\tau^{-1} K_\nu \SRM_\tau^{-1}$, where $\Sigma_\tau=\var(X_\tau)
= \var(T_\tau)$.  Let
\begin{align*}
  \std X = \SRM_\tau^{-1} X_\tau,
  \quad
  \std T = \SRM_\tau^{-1} T_\tau,
  \quad
  \std Y = \SRM_\tau^{-1} Y_\tau.
\end{align*}
Then $\std X$ and $\std T$ are standardized, i.e., $\mean\std X =
\mean \std T=0$ and $\var(\std X) = \var(\std T) = I$.  It is easy to
check that if $\kappa_{\alpha, X_\tau} = \kappa_{\alpha, T_\tau}$ for
$2\le|\alpha|<q$, then $\kappa_{\alpha, \std X} = \kappa_{\alpha, \std
  T}$ for $2\le |\alpha|<q$.  By Lemma \ref{lemma:standard}, $\std X$
and $\std Y$ have $\levy$ measures $\std\ld(\dd u, \dd\theta) =
\ld_\tau(\dd v, \dd\omega)$ and $\std\cpg(\dd u, \dd\theta) =
\cpg_\tau(\dd v, \dd\omega)$, respectively, where $(v,\omega) =
J_{\SRM_\tau}^{-1}(u,\theta)$, and $\std T = \std Y + \std Z$, where
$\std Z\sim N(0, \std K)$ is independent of $\std Y$.

\begin{lemma} \label{lemma:mv-moments}
  Under conditions \eqref{e:mv-p-s-r} and \eqref{e:R-S-std}, $\mean
  \snorm{\std X}^a < \infty$ and $\mean \snorm{\std Y}^a < \infty$ for
  any $a>0$.
\end{lemma}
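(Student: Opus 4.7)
The plan is to treat $\std X$ and $\std Y$ separately, invoking the criterion from Section \ref{ss:assumptions} that for an i.d.\ random variable $W$ with $\levy$ measure $\mu$, $\mean\snorm{W}^a<\infty$ iff $\int\cf{\snorm u>1}\snorm u^a\,\mu(\dd u)<\infty$.

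For $\std X$ I would first note via Lemma \ref{lemma:standard} that its $\levy$ measure $\std\ld$ is the pushforward of $\ld_\tau$ under $J_{\SRM_\tau}^{-1}$. Any $(v,\omega)\in\sppt(\ld_\tau)$ has $v<r_\tau(\omega)$, and $J_{\SRM_\tau}^{-1}(v,\omega)$ has norm $\snorm{\SRM_\tau^{-1}\omega}\,v<\snorm{\SRM_\tau^{-1}\omega}\,r_\tau(\omega)\le\std R\le 1$ by \eqref{e:R-S-std}. Hence $\sppt(\std\ld)$ is bounded, and the moment criterion immediately yields $\mean\snorm{\std X}^a<\infty$ for every $a>0$.

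For $\std Y$, since $\SRM_\tau^{-1}$ is fixed and bounded, $\mean\snorm{\std Y}^a\le\snorm{\SRM_\tau^{-1}}_{\rm op}^a\,\mean\snorm{Y_\tau}^a$, and for $0<a\le 2$ the latter is controlled by $\mean\snorm{Y_\tau}^2<\infty$. For $a>2$, applying the moment criterion to $Y_\tau$ and switching to polar coordinates reduces the task to showing
\begin{align*}
\int \Gamma(p_\tau(\theta)+a+1)\,m_\tau(\theta)\,s_\tau(\theta)^{p_\tau(\theta)+a+1}\,\nu(\dd\theta)<\infty.
\end{align*}
I would rewrite the integrand as $\bigl[\Gamma(p_\tau(\theta)+a+1)/\Gamma(p_\tau(\theta)+3)\bigr]\,s_\tau(\theta)^{a-2}\int v^2\,\cpg_\tau(\dd v\gv\theta)$ and bound the first factor uniformly in $\theta$. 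By \eqref{e:R-S-std}, $(p_\tau(\theta)+h(d)+1)\,s_\tau(\theta)\le 1/\snorm{\SRM_\tau^{-1}\theta}\le 1/c_0$, where $c_0=\min_{\theta\in S}\snorm{\SRM_\tau^{-1}\theta}>0$ by compactness of $S$ and nonsingularity of $\SRM_\tau$. Combined with the estimate $\Gamma(p+a+1)/\Gamma(p+3)\le C_a(p+a)^{a-2}$ and the boundedness of $(p+a)/(p+h(d)+1)$ for $p\ge -1$, this yields a constant $C=C(a,d,c_0)$ for which $\mean\snorm{Y_\tau}^a\le C\,\tr(\var(Y_\tau))<\infty$.

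The main obstacle will be this uniform bound on the Gamma-ratio prefactor: one must convert \eqref{e:R-S-std} into an estimate that simultaneously absorbs the growth of $\Gamma(p+a+1)/\Gamma(p+3)$ in $p$ and the decay of $s_\tau(\theta)^{a-2}$, without invoking any asymptotic regime of $\tau$. Once this is in hand, the rest is the routine observation that $\int v^2\,\cpg_\tau(\dd v,\dd\theta)=\tr(\var(Y_\tau))$ is finite.
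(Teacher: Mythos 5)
Your proof is correct and takes essentially the same route as the paper: first observe that the L\'evy measure of $\std X$ has support bounded by $\std R\le 1$, so all moments of $\std X$ are finite; then reduce finiteness of $\mean\snorm{\std Y}^a$ to a uniform (in $\theta$) bound on the Gamma-ratio prefactor times $s_\tau(\theta)^{a-2}$, which is exactly what the paper does with its quantity $b$. The only difference is cosmetic: the paper transports everything to the standardized L\'evy measures $\std\cpg$ with parameters $\std p,\std s,\std m$ via Lemma \ref{lemma:standard} and then invokes Lemma \ref{lemma:essential} to restate \eqref{e:mv-p-s-r} and \eqref{e:R-S-std} in those coordinates (obtaining $[\std p(\theta)+3]\,\std s(\theta)\le 1$ and hence $\ess\sup(\std p+a)\std s<\infty$), whereas you stay in the original polar coordinates, absorb $\SRM_\tau^{-1}$ through its operator norm, and extract the uniform bound directly from $\std S\le 1$ together with $c_0=\min_{\theta\in S}\snorm{\SRM_\tau^{-1}\theta}>0$. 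Both reach the same conclusion that $\int u^a\,\cpg_\tau(\dd u,\dd\theta)\le C\int u^2\,\cpg_\tau(\dd u,\dd\theta)<\infty$; your $\Gamma(p+a+1)/\Gamma(p+3)\le C_a(p+a)^{a-2}$ step is the analogue of the paper's observation that $b\le\ess\sup[(\std p+a)\std s]^{a-2}$, and the bounded ratio $(p+a)/(p+h(d)+1)$ plays the same role as the paper's $(\std p+a)/(\std p+3)$.
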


The next lemma is analogous to Lemma \ref{lemma:rdf}.
\begin{lemma} \label{lemma:mv-smooth}
  Given $\tau>0$, the following statements are true.

  1) Under condition \eqref{e:mv-p-s-r} and \eqref{e:R-S-std}, for any
  $a\ge 0$ and $b\ge 0$ with $a+b>0$ and $\rx>0$, if $\xi$ is i.d.\
  with $\che_\xi = a \che_{\std X} + b \che_{\std Y}$ and $Z\sim N(0,
  \rx^2 I)$ is independent of $\xi$, then $\chf_{\xi + Z}\in
  \rdf(\Reals^d)$.
  
  2) Under condition \eqref{e:exp-L-mv}, $f_{\std X} \in
  C^q(\Reals^d)$ and for each $|\alpha|\le q$, $f\Sp\alpha_{\std
    X}(x)\to 0$ as $|x|\toi$.
\end{lemma}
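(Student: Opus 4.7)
For part 1), I will exploit that $\chf_{\xi+Z}(t)=\chf_\xi(t)\,e^{-\rx^2\snorm t^2/2}$, so the Gaussian factor supplies super-polynomial decay while $\chf_\xi$ contributes only bounded growth. First, I will verify via Lemma \ref{lemma:mv-moments} and the standard criterion that an i.d.\ random variable has finite $k$-th absolute moment iff the large-$\snorm u$ tails of its $\levy$ measure do, that $\mean\snorm\xi^k<\infty$ for every $k\ge 1$; here the $u$-marginal of $\std\ld$ has compact support (inherited from $\ld_\tau$), while $\std\cpg$ inherits Gamma-type exponential tails. Consequently $\chf_\xi\in C^\infty(\Reals^d)$ with $|\Pd^\beta\chf_\xi(t)|\le\mean\snorm\xi^{|\beta|}$ uniformly in $t$. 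A single use of Leibniz then expresses $t^\alpha\Pd^\gamma\chf_{\xi+Z}(t)$ as a finite sum of (bounded)$\,\cdot\,$(polynomial in $t$)$\,\cdot\,e^{-\rx^2\snorm t^2/2}$, which vanishes at infinity for every pair $(\alpha,\gamma)$; this is exactly the definition of $\rdf(\Reals^d)$.

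For part 2), I will first derive a pointwise upper bound on $|\chf_{\std X}|$ and then read off the regularity of $f_{\std X}$ via Sobolev embedding. Since $\std X=\SRM_\tau^{-1}X_\tau$ and $\ip{\SRM_\tau^{-1}t}{\theta}=\ip t{\SRM_\tau^{-1}\theta}$,
\begin{align*}
  \Re\che_{\std X}(t)
  =\int\Sbr{1-\cos\bigl(u\ip t{\SRM_\tau^{-1}\theta}\bigr)}\,\ld_\tau(\dd u,\dd\theta).
\end{align*}
Using the elementary inequality $1-\cos z\ge C_1^2 z^2/2$ for $|z|\le 1$ (with $C_1=\sin 1$), the cruder sufficient condition $u\snorm{\SRM_\tau^{-1}\theta}\snorm t\le 1$ for it, and the fact that $\ld_\tau(\dd u\gv\theta)$ is already truncated at $r_\tau(\theta)$, the combined indicator coincides exactly with the one defining $M_\tau(1/\snorm t)$ in \eqref{e:root-std}. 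Writing $\hat t=t/\snorm t$ this gives
\begin{align*}
  \Re\che_{\std X}(t)
  \ge \frac{\snorm t^2}{2}\,\hat t\,'
  \Sbr{C_1^2\SRM_\tau^{-1}M_\tau(1/\snorm t)\SRM_\tau^{-1}}\hat t
  \ge \frac{\snorm t^2\,\varrho_\tau(1/\snorm t)}{2},
\end{align*}
the last step using that $\varrho_\tau$ is the smallest of all eigenvalues of the two matrices in \eqref{e:root-std}. Hence $|\chf_{\std X}(t)|^2\le\exp(-\snorm t^2\varrho_\tau(1/\snorm t))$.

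With this bound in hand, I will show $f_{\std X}\in H^{q+h(d)}(\Reals^d)$, i.e., $\int(1+\snorm t^2)^{q+h(d)}|\chf_{\std X}(t)|^2\,\dd t<\infty$. After passing to polar coordinates the integrand at large $s=\snorm t$ is dominated by $s^{2q+2h(d)+d-1}e^{-s^2\varrho_\tau(1/s)}$, whose integrability is exactly hypothesis \eqref{e:exp-L-mv}. Since $h(d)=\Flr{d/2}+1>d/2$, the classical Sobolev embedding $H^{q+h(d)}(\Reals^d)\hookrightarrow C^q_0(\Reals^d)$ — decay at infinity for derivatives being inherited from the density of $C^\infty_c$ in $H^{q+h(d)}$ — yields $f_{\std X}\in C^q$ with $f\Sp\alpha_{\std X}(x)\to 0$ as $\snorm x\to\infty$ for every $|\alpha|\le q$.

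The hardest step will be the lower bound on $\Re\che_{\std X}(t)$: the intrinsic truncation $u<r_\tau(\theta)$ built into $\ld_\tau$ and the analytic truncation $u\snorm{\SRM_\tau^{-1}\theta}\snorm t\le 1$ forced by the $1-\cos$ inequality must combine cleanly so that the resulting indicator is precisely the one used to define $M_\tau(1/\snorm t)$. Everything else is a routine combination of Sobolev and Fourier calculus.
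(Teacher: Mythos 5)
Your proof is correct, but it takes a genuinely different route for both parts. For part~1), the paper reduces to bounds on the derivatives $\Pd^\alpha\che_\xi$ of the \emph{characteristic exponent} (Lemma~\ref{lemma:Lambda-derivatives}, linear growth in $\snorm t$) and then exploits the polynomial structure of $\Pd^\gamma\exp(-\che_\xi)$, exactly as in part~1) of Lemma~\ref{lemma:rdf}. You instead bound the derivatives of the \emph{characteristic function} directly via moments of $\xi$ (uniformly in $t$) and apply Leibniz to the product with the Gaussian; that works because the Lévy measure $a\std\ld+b\std\cpg$ of $\xi$ has tails controlled precisely as in Lemma~\ref{lemma:mv-moments}, and it is slightly more self-contained since it bypasses Lemma~\ref{lemma:Lambda-derivatives}. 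For part~2), you and the paper derive the identical key estimate $\Re\che_{\std X}(t)\ge\varrho_\tau(1/\snorm t)\snorm t^2/2$ (yours valid for all $t$, the paper invoking Lemma~\ref{lemma:Lambda-bounds} for $\snorm t>1/\std R$), but you then show $f_{\std X}\in H^{q+h(d)}(\Reals^d)$ and invoke the Sobolev embedding $H^{q+h(d)}\hookrightarrow C^q_0$, whereas the paper applies Cauchy--Schwarz to show $\snorm t^q|\chf_{\std X}(t)|\in L^1(\Reals^d)$ and then invokes the standard Fourier-inversion regularity result (Proposition~28.1 of \cite{sato:99}). The calibration is the same in both: the exponent $2q+2h(d)+d-1$ in \eqref{e:exp-L-mv} is exactly what the paper needs to make $\int(1+\snorm t^{2h(d)})^{-1}\,\dd t$ finite, and exactly what you need to clear the $d/2$ threshold for Sobolev embedding. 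Both proofs buy the same conclusion at the same cost; yours slightly shortens the Fourier manipulation by outsourcing to Sobolev theory, the paper's stays inside the elementary Fourier-inversion toolbox it already set up.
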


The next lemma is analogous to Lemma \ref{lemma:L1}.
\begin{lemma} \label{lemma:L1-mv-2}
  Given $\rx>0$ and $A$, $B\ge 0$ with $A+B=1$, let $\xi$ be an i.d.\
  random variable with
  \begin{align*}
    \che_\xi(t) = A \che_{\std X}(t) + B \che_{\std T}(t) +
    \rx^2\snorm t^2/2.
  \end{align*}
  Then, under condition \eqref{e:R-S-std}, for
  $m\ge 3$,
  \begin{align*}
    \max_{|\alpha|=m} \int |f_\xi\Sp \alpha(x)|\,\dd x
    \le (1+\rx^2/d)^{h(d)} c(d,m)
    \sqrt{
      1+\int_{1/\std R}^\infty L_{d,m}(s) e^{-\varrho_\tau(1/s) s^2}
      \,\dd s
    },
  \end{align*}
  where $c(d,m)$ is a constant only depending on $(d,m)$ and
  $L_{d,m}(s)$ a polynomial of order no greater than $2m + 2h(d)+d-1$
  whose coefficients are constants only depending on $(d,m)$.
\end{lemma}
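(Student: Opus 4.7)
The plan is to combine Fourier inversion with a weighted $L^2$ estimate via Plancherel's identity. First I would apply Cauchy--Schwartz with the weight $w(x) = (1+\snorm x^{2h(d)})^{1/2}$ to get
\begin{align*}
  \int |f_\xi\Sp\alpha(x)|\,\dd x \le \Grp{\int \frac{\dd x}{1+\snorm x^{2h(d)}}}^{1/2}\Grp{\int (1+\snorm x^{2h(d)})|f_\xi\Sp\alpha(x)|^2\,\dd x}^{1/2}.
\end{align*}
The first factor is finite precisely because $2h(d) > d$ owing to the choice $h(d) = \Flr{d/2}+1$, and it only depends on $d$. Expanding $\snorm x^{2h(d)} = \sum_{|\gamma|=h(d)}\binom{h(d)}{\gamma} x^{2\gamma}$ and applying Plancherel, the second factor reduces, up to combinatorial constants depending only on $(d,h(d))$, to a finite sum of terms $\int |\Pd^\beta(t^\alpha \chf_\xi(t))|^2\,\dd t$ with $|\beta|\le h(d)$.

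Next, I would expand $\Pd^\beta(t^\alpha\chf_\xi(t))$ by Leibniz, and apply Faà di Bruno to $\chf_\xi = e^{-\che_\xi}$, writing each $\Pd^\delta \chf_\xi$ as $\chf_\xi$ times a polynomial in the derivatives of $-\che_\xi$ of order $\le|\delta|$. Since $\che_\xi = A\che_{\std X} + B\che_{\std T} + \rx^2\snorm t^2/2$, derivatives of order $\ge 2$ of the compound Poisson contributions $\che_{\std X}$ and $\che_{\std Y}$ are uniformly bounded in $t$ by absolute moments of the standardized Lévy measures $\std\ld$ and $\std\cpg$; these are finite by Lemma \ref{lemma:mv-moments}, which applies thanks to \eqref{e:mv-p-s-r} and \eqref{e:R-S-std}. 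The first-order derivatives grow at most linearly in $\snorm t$, with a linear coefficient involving $\rx^2$ and the eigenvalues of $\std K$; tracking these $\rx^2$-dependent contributions is what produces the overall factor $(1+\rx^2/d)^{h(d)}$ in the bound.

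Third, I would bound $|\chf_\xi(t)|^2 = e^{-2\Re\che_\xi(t)}$ from above by controlling $\Re\che_\xi$ from below. Using $1-\cos z \ge (C_1^2/2)z^2$ for $|z|\le 1$, together with the push-forward formula from Lemma \ref{lemma:standard}, I get
\begin{align*}
  \Re\che_{\std X}(t) \ge \nth 2 t^T \Sbr{C_1^2 \SRM_\tau^{-1}M_\tau(1/\snorm t)\SRM_\tau^{-1}} t,
\end{align*}
because $u < (1/\snorm t)/\snorm{\SRM_\tau^{-1}\theta}$ implies $|\ip t{\SRM_\tau^{-1}\theta}u|\le 1$. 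Combined with the Gaussian lower bound $\Re\che_{\std T}(t)\ge \nth 2 t^T \std K t$ coming from the $\std Z$ component, and using $A+B=1$, this yields $\Re\che_\xi(t)\ge \nth 2\varrho_\tau(1/\snorm t)\snorm t^2$ whenever $\snorm t\ge 1/\std R$. Assembling all the pieces gives
\begin{align*}
  |\Pd^\beta(t^\alpha\chf_\xi(t))|^2 \le c(d,m)(1+\rx^2/d)^{h(d)}(1+\snorm t)^{2m+2h(d)} e^{-\varrho_\tau(1/\snorm t)\snorm t^2},
\end{align*}
and switching to polar coordinates $t = s\omega$ introduces a Jacobian factor $s^{d-1}$, bringing the polynomial degree in $s$ up to $2m + 2h(d) + d - 1$; this is $L_{d,m}(s)$. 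The constant $1$ inside $\sqrt{1+\cdots}$ absorbs the contribution from the compact region $\snorm t\le 1/\std R$, where the integrand is bounded by a constant depending only on $(d,m)$.

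The main obstacle is the combinatorial bookkeeping of the combined Leibniz and Faà di Bruno expansions, while simultaneously tracking polynomial degree in $\snorm t$ and isolating the $\rx^2$-dependence. The cleanest organization is to split $\chf_\xi(t) = e^{-A\che_{\std X}(t)-B\che_{\std T}(t)}\cdot e^{-\rx^2\snorm t^2/2}$, differentiate the Gaussian factor explicitly via Hermite-type identities, and collect powers of $\rx^2$ separately. A secondary subtlety is to verify that the truncation $u < (1/\snorm t)/\snorm{\SRM_\tau^{-1}\theta}$ defining $M_\tau(1/\snorm t)$ precisely matches the range of $u$ on which the quadratic lower bound for $1-\cos$ may be applied; this hinges on the Cauchy--Schwartz-type inequality $|\ip t{\SRM_\tau^{-1}\theta}|\le \snorm t\,\snorm{\SRM_\tau^{-1}\theta}$.
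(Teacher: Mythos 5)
Your proposal is essentially correct and follows the same overall strategy as the paper: weighted Cauchy--Schwartz plus Plancherel to pass to weighted $L^2$ bounds on derivatives of $\chf_\xi$, Leibniz/Fa\`a di Bruno to control those derivatives via bounds on derivatives of $\che_\xi$, and the two-region analysis of $\Re\che_\xi$ to control $|\chf_\xi|^2$. One genuine (though minor) difference in detail: the paper's Lemma~\ref{lemma:L1-mv} uses the weight $w(x)=x_1^{2h(d)}+\cdots+x_d^{2h(d)}$ rather than $\snorm x^{2h(d)}$. With the paper's choice of weight, Plancherel converts $x_i^{h(d)} f\Sp\alpha$ into $\Pd_i^{h(d)}(t^\alpha\chf_\xi)$, so only pure single-variable derivatives $\Pd_i^j$, $j\le h(d)$, of $\chf_\xi$ are ever needed; the one-variable version of Fa\`a di Bruno (formula~\eqref{e:derivative-cf}) then suffices. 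Your isotropic weight $\snorm x^{2h(d)}$ produces cross terms $x^{2\gamma}$ with $|\gamma|=h(d)$ under the multinomial expansion, so your Plancherel step leads to genuine mixed partials $\Pd^\gamma(t^\alpha\chf_\xi)$, which in turn need the multivariate Fa\`a di Bruno. This is heavier bookkeeping but nothing breaks: Lemma~\ref{lemma:Lambda-derivatives} already supplies uniform bounds on $\Pd^\alpha\che_{\std X}$ and $\Pd^\alpha\che_{\std T}$ for general multi-indices, and the final polynomial degree in $\snorm t$ (hence in $s$ after the polar Jacobian) comes out to $2m+2h(d)+d-1$ either way. The paper's choice simply makes the computation cleaner, while yours is slightly more symmetric; the constant $c(d,m)$ and the prefactor $(1+\rx^2/d)^{h(d)}$ emerge identically in both treatments.

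One small imprecision in your description of the compact region: the integral over $\snorm t\le 1/\std R$ is not bounded merely because the integrand is bounded; one still uses the Gaussian decay $\Re[\che_\xi(t)]\ge C_1^2 C_2\snorm t^2/2$ valid there (from part~1 of Lemma~\ref{lemma:Lambda-bounds}, which needs both the first- and second-moment matching of $\std X$, $\std Y$ against $\std Z$ and the definition of the constant $C_2$) to dominate the integral by $\int h(\snorm t)e^{-C_1^2 C_2\snorm t^2}\,\dd t = c'(d,m)$, which is what produces the standalone $1$ inside the square root. This is the step that quietly uses both $C_1$ and $C_2$ from the univariate theory, and is worth stating explicitly rather than treating the compact region as trivial.
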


The proof of Theorem \ref{thm:tv-mv} is similar to the one for Theorem
\ref{thm:tv}, so we will only give its sketch.  By $\dtv(X,
\Delta_\tau + T_\tau) \le \dtv(X_\tau, T_\tau)=\dtv(\std X, \std T)$,
it suffices to show \eqref{e:pgn-dtv-mv} for $\dtv(\std X, \std T)$.
Let $Z$ and $Z'\in\Reals^d$ be i.i.d.\ $N(0,I)$ random variables
independent of $\std X$ and $\std T$.  Given $\rx>0$, for $n\ge 2$,
\begin{align*}
  \std X + \rx Z = U_2+\cdots+U_{n+1}, \quad
  \std T + \rx Z' = \cum V n,
\end{align*}
where $U_i$ and $V_j$ are independent and i.d.\ with
$\che_{U_i} = n^{-1} \che_{\std X_r + \rx Z}$ and $\che_{V_j} =
n^{-1} \che_{\std T_r + \rx Z'}$.  Let $W_k = \cum V {k-1} +
U_{k+1} + \cdots + U_{n+1}$.  Given a measurable function $h$ with
$\snorm h_\infty\le 1$, let $g_k(x) = \mean h(W_k+x)$.  Then
\begin{align*}
  &
  |\mean[h(\std X+\rx Z)-h(\std Y + \rx Z')]|
  \\
  &\le
  \sum_{k=1}^{n+1} |\mean[g_k(U_k) - g_k(V_k)]|
  + |\mean[g_1(U_1) - g_1(0)]| + |\mean[g_{n+1}(V_{n+1}) - g_{n+1}(0)]|
\end{align*}
Since $g_k(x) = \int h(u) f_{W_k}(u-x)\,\dd u$ and $\snorm h_\infty\le
1$, by Lemma \ref{lemma:mv-smooth}, $g_k\in C^\infty$.  Then
\begin{align*}
  g_k(U_k) - g_k(V_k) = \sum_{|\alpha|<q}
  \frac{g_k\Sp\alpha(0)}{\alpha !} (U_k^\alpha -V_k^\alpha) +
  \sum_{|\alpha|=q} \nth{\alpha !}
  [g_k\Sp\alpha(\theta(U_k) U_k) U_k^\alpha
  - g_k\Sp\alpha(\theta(V_k) V_k) V_k^\alpha],
\end{align*}
where $\theta(x)\in [0,1]$.  Since $\mean U_k^\alpha = \mean
V_k^\alpha$ for $2\le |\alpha|<q$, then
\begin{align*}
  |\mean[g_k(U_k) - g_k(V_k)]|
  \le \sum_{|\alpha|=q} \frac{\snorm {g_k\Sp\alpha}_\infty}{\alpha!}
  [\mean|U_k^\alpha| + \mean|V_k^\alpha|].
\end{align*}
By Lemma \ref{lemma:L1-mv-2}, for $|\alpha|=q$,
\begin{align*}
  \snorm{g_k\Sp\alpha}_\infty
  \le \int |f\Sp\alpha_{W_k}(u)|\,\dd u
  \le (1+\rx^2/d)^{h(d)} G(d,q, \tau),
\end{align*}
which is finite by \eqref{e:exp-L-mv}.  Note $|u^\alpha|\le \snorm
u^q$.  Then by Lemma \ref{lemma:mv-moments} and Proposition
\ref{prop:AR}, as $n\toi$,
\begin{align*}
  \sum_{k=1}^{n+1}
  \mean|U_k\Sp\alpha|\to |\kappa|_{\alpha, \std X},
  \quad
  \sum_{k=1}^{n+1}
  \mean|V_k\Sp\alpha|\to |\kappa|_{\alpha, \std Y}.
\end{align*}
From here, an argument similar to that for Theorem \ref{thm:tv} leads
to
\begin{align*}
  \dtv(\std X, \std T)
  \le
  G(d,q,\tau)
  \sum_{|\alpha|=q}
  \frac{|\kappa|_{\alpha, \std X} + |\kappa|_{\alpha, \std Y}}
  {\alpha!}.
\end{align*}
Since
\begin{align*}
  \sum_{|\alpha|=q}
  \frac{|\kappa|_{\alpha, \std X}}{\alpha!}
  &=
  \sum_{|\alpha|=q} \nth{\alpha!}
  \int |(u\theta)^\alpha|\,\std\ld(\dd u, \dd\theta)
  \\
  &=
  \sum_{|\alpha|=q} \nth{\alpha!}
  \int |(u\SRM_\tau^{-1}\theta)^\alpha|
  \,\ld_\tau(\dd u, \dd\theta)
  = \nth{q!} 
  \int
  \snorm{(u\SRM_\tau^{-1}\theta)}_1^q \,\ld_\tau(\dd u, \dd\theta)
\end{align*}
and a similar expression holds for $\sum_{|\alpha|=q}
|\kappa|_{\alpha, \std T}/\alpha!$, the proof of Theorem
\ref{thm:tv-mv} is then complete.

\begin{proof}[Proof of Proposition \ref{prop:exp-L-mv}]
  1) By $\tau^2 K_\nu \le \Sigma_\tau$, the smallest eigenvalue
  of $\Sigma_\tau$ is at least $\tau^2 c(K_\nu)^2$, yielding
  the first assertion and $\std R = O(\ess\sup r_\tau(\theta)/\tau)$.
  Then by \eqref{e:X-moment-mv2}, $\std R\to 0$ as $\tau\to 0+$.

  2) By condition \eqref{e:X-moment-mv1}, for small $\tau>0$,
  $\Sigma_\tau = \int \theta\theta'\,\nu(\dd\theta) \int 
  u^2\ld_\tau(\dd u\gv\theta) \le b\tau^2 K_\nu$ and hence
  \begin{align} \label{e:X-moment-root}
    \tau^2 K_\nu \ge c_1\Sigma_\tau,
  \end{align}
  where $c_1 = 1/b>0$.  Denote
  \begin{align*}
    g_\tau(s,\theta) = \int u^2 \cf{
      u<r_\tau(\theta)\wedge \nth{\snorm{\SRM_\tau^{-1} \theta} s}
    }\,\ld(\dd u\gv\theta).
  \end{align*}
  Then $M_\tau(1/s) = \int \theta\theta' g_\tau(s,\theta)\,\nu(\dd
  \theta)$.  If $1/\std R < s < \nth{\snorm{\SRM_\tau ^{-1}\theta}
    r_\tau(\theta)}$, then by definition of $r_\tau(\theta)$,
  \begin{align*}
    g_\tau(s,\theta)
    =
    \int u^2 \cf{u<r_\tau(\theta)}\,\ld(\dd u\gv\theta)
    = \tau^2.
  \end{align*}
  If $s\ge \nth{\snorm{\SRM_\tau ^{-1}\theta}
    r_\tau(\theta)}$, then by $\snorm{\SRM_\tau^{-1}\theta}\le
  1/(c\tau)$ and condition \eqref{e:X-moment-mv3}, with $M$ to be
  determined,
  \begin{align*}
    g_\tau(s,\theta)
    &=
    \int u^2 \cf{
      u<\nth{\snorm{\SRM_\tau^{-1} \theta} s}
    }\,\ld(\dd u\gv\theta)
    \ge
    \int u^2 \cf{
      u<\frac{c\tau} {s}
    }\,\ld(\dd u\gv\theta)
    \ge
    \frac{c^2 \tau^2 M}{s^2}
    \ln \frac{s}{c\tau},
  \end{align*}
  Thus, for $s>1/\std R$, $g_\tau(s,\theta) \ge \tau^2 F_\tau(s)$,
  where
  \begin{align*}
    F_\tau(s) = \min\Cbr{1,\,\frac{c^2 M}{s^2} \ln\frac{s}{c\tau}},
  \end{align*}
  and as a result, by \eqref{e:X-moment-root}, $M_\tau(1/s) \ge \tau^2
  F_\tau(s) K_\nu \ge c_1 F_\tau(s) \Sigma_\tau$, giving
  $\SRM_\tau^{-1} M_\tau(1/s) \SRM_\tau^{-1} \ge c_1 F_\tau(s)$.
  Also, by \eqref{e:X-moment-root}, $\std K = \tau^2 \SRM_\tau^{-1}
  K_\nu \SRM_\tau^{-1} \ge c_1 I$.  Thus, there is $c_2 = c_2(b,
  K_\nu)>0$, such that for $\tau>0$ small enough and $s>1/\std R$,
  \begin{align*}
    \varrho_\tau(1/s)s^2 \ge c_2 F_\tau(s) s^2
    = 
    c_2 \min\Cbr{s^2, c^2 M\ln \frac{s}{c\tau}}.
  \end{align*}
  If $M>0$ is large enough so that $c_2 c^2 M> 2 q + 2h(d) + d$, then
  \begin{align*}
    \int_{1/\std R}^\infty s^{2q + 2h(d)+d-1} e^{-\varrho_\tau(1/s)
      s^2}\,\dd s
    \le \int_{1/\std R}^\infty s^{2q + 2h(d)+d-1} 
    \Cbr{e^{-c_2 s^2} + \Grp{\frac{s}{c \tau}}^{-c_2 c^2 M}}\,
    \dd s
  \end{align*}
  is $o(1)$ as $\tau\to 0$.  It then easily follows that
  $G(d,q,\tau) = c(d,q) + o(1)$.
\end{proof}

\section{Proofs of auxiliary results} \label{sec:proof-aux}
\subsection{Proposition \ref{prop:AR}}
Given $0<a<b<\infty$, let $0\le h(u)\le 1$ be a continuous
function with compact support in $\Reals^d \setminus\{0\}$, such
that $h(u)=1$ if $\snorm u\in [a,b]$.  As pointed out in Section
\ref{ss:assumptions},
\begin{align*}
  &
  \lim_{n\toi} n\mean[h(U_n) f(U_n)]
  =\int h(u) f(u)\,\ld(\dd u)<\infty, \\
  &
  \lim_{n\toi} n\mean[h(U_n) g(\snorm{U_n})]
  =\int h(u) g(\snorm u)\,\ld(\dd u)<\infty.
\end{align*}
From the second line and monotone convergence it follows that
\begin{align*}
  \liminf_{n\toi} n \mean[g(\snorm{U_n})]
  \ge\int g(\snorm u)\,\ld(\dd u).
\end{align*}
Clearly, $f\in L^1(\ld)$ if $g(\snorm u)\in L^1(\ld)$.  Furthermore,
\begin{align*}
  |n\mean[f(U_n)]-n\mean[h(U_n) f(U_n)]|
  &\le
  n\mean [(1-h(U_n)) g(\snorm{U_n})]
  \\
  &\le
  n\mean [g(\snorm{U_n})\cf{\snorm{U_n}\le a \text{ or }
    \snorm{U_n}\ge b}].
\end{align*}
Therefore, to prove the lemma, it suffices to show
\begin{align}
  &
  \limsup_{n\toi}
  n\mean [g(\snorm{U_n})\cf{\snorm{U_n}\le a}]
  \to 0, \quad\text{as } a\to 0+,
  \label{e:limit-1}
  \\
  &
  \limsup_{n\toi}
  n\mean [g(\snorm{U_n})\cf{\snorm{U_n}\ge b}]
  \to 0, \quad\text{as } b\toi.
  \label{e:limit-2}
\end{align}
  
Let $m(a) = \sup_{0<\snorm t\le a} g(t)/t^2$.   For $n\ge 1$, since
$\var(U_n) = \var(X)/n$ and $\mean U_n = \mean X/n$,
\begin{align*}
  n \mean[|g(\snorm{U_n})\cf{\snorm{U_n}\le a}]
  &\le
  m(a) n\mean[\snorm{U_n}^2]
  \\
  &=
  m(a) n[\tr(\var(U_n)) + \snorm{\mean U_n}^2]
  \\
  &= 
  m(a) [\tr(\var(X)) + \snorm{\mean X}^2/n].
\end{align*}
Since $m(a)\to 0$, then \eqref{e:limit-1} follows.  Next, since $g$ is
continuous and non-decreasing,
\begin{align}
  n\mean [g(\snorm{U_n})\cf{\snorm{U_n}\ge b}]
  &=
  \int_0^\infty n \pr\{g(\snorm{U_n})\ge s, \snorm{U_n}\ge b\}\,\dd s
  \nonumber
  \\
  &=
  \int_0^\infty n \pr\{\snorm{U_n}\ge g^*(s)\vee b\}\,\dd s,
  \label{e:g-truncate}
\end{align}
where $g^*(s) = \inf\{x: g(x)\ge s\}$.  By Markov inequality, 
\begin{align*}
  n \pr\{\snorm{U_n}\ge x\} \le \frac{n \mean\snorm{U_n}^2}{x^2}
  = \frac{\tr(\var(X)) + \snorm{\mean X}^2/n}{x^2}.
\end{align*}
Therefore, for large $x>0$, $n\pr\{\snorm{U_n}\ge x\}\le 1$.  Since
$c_0 t\le 1-(1-t/n)^n$ for $t\in [0,1]$, where $c_0>0$ is a universal
constant, then
\begin{align} \label{e:joint}
  c_0 n \pr\{\snorm{U_n}\ge x\}
  \le 1-(1-\pr\{\snorm{U_n}\ge x\})^n
  = \pr\Cbr{\max_{1\le k\le n} \snorm{U_{n,k}}\ge x}
\end{align}
where $U_{n,k}$ are i.i.d.\ $\sim U_n$.

First, suppose $X$ is asymmetric.  Let $V_{n1}$, \ldots, $V_{n n}\sim
U_n$ be another set of i.i.d.\ random variables which are also
independent of $U_{n k}$.  Fix $\delta\in (0,1)$.  By setting $b>0$
even larger, for all $x\ge b$,
\begin{align*}
  \pr\Cbr{\max_{1\le k\le n} \snorm{V_{n k}}<\delta x}
  \ge 1 - n\pr\{\snorm{U_n}\ge \delta x\} \ge 1/2.
\end{align*}
As a result,
\begin{align*}
  c_0 n \pr\{\snorm{U_n}\ge x\}
  &\le
  2 \pr\Cbr{\max_{1\le k\le n} \snorm{U_{n k}}\ge x}
  \pr\Cbr{\max_{1\le k\le n} \snorm{V_{n k}}< \delta x}
  \nonumber \\
  &\le 
  2 \pr\Cbr{\max_{1\le k\le n} \snorm{U_{n k} - V_{n k}}\ge
    (1-\delta) x
  }
  \le
  4 \pr\Cbr{\snorm{X'}\ge (1-\delta) x
  }
\end{align*}
where the last line is due to the symmetry of $U_{n k} - V_{n k}$ and
\cite{ledoux:91}, Proposition 2.3.  Thus, by
\eqref{e:g-truncate},
\begin{align*}
  n \mean[g(\snorm {U_n}) \cf{\snorm {U_n}\ge b}]
  &\le 
  \frac{4}{c_0} \int_0^\infty
  \pr\Cbr{\frac{\snorm{ X' }}{1-\delta}\ge g^*(s)\vee b
  }\, \dd s
  \\
  &=
  \frac{4}{c_0} \int_0^\infty
  \pr\Cbr{g\Grp{\frac{\snorm{ X' }}{1-\delta}}\ge s, \
    \frac{\snorm{ X' }}{1-\delta}\ge b
  }\, \dd s
  \\
  &=
  \frac{4}{c_0}\mean\Sbr{
    g\Grp{\frac{\snorm{X'}}{1-\delta}}
    \cf{\snorm{X'}\ge (1-\delta)b}
  }.
\end{align*}
If $\mean[g(c\snorm{X'})]<\infty$ for some $c>1$, then by choosing
$\delta>0$ with $1/(1-\delta)<c$, it is seen \eqref{e:limit-2} holds.
Moreover, since $g(\snorm {x-y}) \le g(2\snorm{x}) + g(2\snorm{y})$,
$\mean[g(c\snorm {X'})] \le 2 \mean[g(2c \snorm X)]$, and hence
\eqref{e:limit-2} holds once $\mean[g(2c\snorm X)] < 
\infty$.

Finally, if $X$ is symmetric, then $U_n$ is symmetric and from
\eqref{e:joint} and \cite{ledoux:91}, Proposition 2.3,  
$c_0 n\pr\{\snorm{U_n}\ge x\} \le 2 \pr\{\snorm X\ge x\}$.  Then
by similar argument, \eqref{e:limit-2} holds once $\mean[g(\snorm
X)]<\infty$.

\subsection{Lemmas for univariate case}
\begin{proof}[Proof of Lemma \ref{lemma:rdf}]
  1) From the assumption, $\int |u|^j\, \ld(\dd u)<\infty$ for all
  $j\ge 2$.  Then by dominated convergence, $\che_\xi\in 
  C^\infty(\Reals)$ with $\che_\xi\Sp j(t) = \int (\cf{j=1}-e^{\iunit
    t u}) (\iunit u)^j \,\nu(\dd u)$ for $j\ge 1$.  By $|1-e^{\iunit
    x}|\le |x|$ for $x\in\Reals$, $|\che_\xi'(t)| \le \kappa_{2,\xi}
  |t|$.  Clearly, $|\che_\xi\Sp j(t)| \le |\kappa|_{j,\xi}$ for $j\ge
  2$.  Since $\chf_{\xi+Z}(t) = \exp(-\che_\xi(t) - \rx^2 t^2/2)$,
  then for $j\ge 0$, $\chf\Sp j_{\xi+Z}(t) = P_j(\che_\xi'(t), \ldots,
  \che_\xi\Sp j(t), t)\, \chf_\xi(t) \exp(-\rx^2 t^2/2)$, where $P_j(z)$
  is a multivariate polynomial in $z=(\eno z {j+1})$ of order $j$.  It
  follows that $|\chf\Sp j_{\xi + Z}(t)| = O(|t|^j e^{-\rx^2 t^2/2})$
  and hence for any $p\ge 0$, $|t|^p  |\chf\Sp j_{\xi+Z}(t)|\to 0$ as
  $|t|\toi$, which yields the proof.

  2) For $|t|\ge 1/r$,
  \begin{align*}
    \Re[\che_{X_r}(t)]
    &=
    \int_{|u|<r} (1-\cos t u) \,\ld(\dd u)
    \ge
    \int_{|u|<1/|t|} (1-\cos t u) \,\ld(\dd u).
  \end{align*}
  Then by $1-\cos x\ge C_1 x^2/2$ for $|x|\le 1$,
  \begin{align*}
    \Re[\che_{X_r}(t)]\ge
    \frac{C_1^2 t^2}{2} \int_{u<1/|t|} u^2 \,\ld(\dd u)
    \ge
    L(t,r).
  \end{align*}
  On the other hand, by Cauchy-Schwartz inequality,
  \begin{align*}
    \int_{|t|\ge 1/r} |t|^q |\chf_{X_r}(t)|\,\dd t
    &\le 
    \Grp{
      \int \frac{\dd t}{1+t^2}
    }^{1/2}    
    \Grp{
      \int_{|t|\ge 1/r} (1+t^2) t^{2q} |\chf_{X_r}(t)|^2\,\dd t
    }^{1/2}
    \\
    &\le
    \sqrt{\pi}
    \Grp{
      \int (1+t^2) t^{2q} e^{-2L(t,r)}\,\dd t
    }^{1/2}
  \end{align*}
  Then by \eqref{e:exp-L}, $|t|^q |\chf_{X_r}(t)| \in L^1(\Reals)$
  and the proof follows from Proposition 28.1 of \cite{sato:99}.
\end{proof}

To prove Lemma \ref{lemma:L1}, we need a type of inequalities that
are known (cf.\ \cite{bhattacharya:76}, Lemma 11.6).  Since the
expression of $(\ft f)\Sp j$ becomes complicated rapidly as $j$
increases, the following specific form is used to reduce the maximum
order of derivative involved.
\begin{lemma} \label{lemma:L1-deriv}
  Let $f\in \rdf(\Reals)$ and $\chf(t) = \ft f$.  Then for $j\ge 1$,
  \begin{align*}
    &
    \int |f\Sp j(x)|\,\dd x\\
    &\le \nth{\sqrt{2}}\Sbr{
      \Grp{\int | t^j \chf(t)|^2\,\dd t}^{1/2}
      + j\Grp{\int |t^{j-1}\chf(t)|^2\,\dd t}^{1/2}
      + \Grp{\int |t^j\chf'(t)|^2\,\dd t}^{1/2}
    }. 
  \end{align*}
\end{lemma}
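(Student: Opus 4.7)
The plan is standard Fourier analysis combined with Plancherel, engineered to keep only first derivatives of $\chi$ on the right-hand side. I start with Cauchy--Schwartz against the weight $(1+x^2)^{\pm 1/2}$:
\begin{align*}
  \int |f\Sp j(x)|\,\dd x
  \le
  \Grp{\int \frac{\dd x}{1+x^2}}^{1/2}
  \Grp{\int (1+x^2)|f\Sp j(x)|^2\,\dd x}^{1/2}
  =
  \sqrt\pi \Grp{\int (1+x^2)|f\Sp j(x)|^2\,\dd x}^{1/2},
\end{align*}
so the task reduces to bounding the two integrals $\int |f\Sp j|^2\,\dd x$ and $\int |x f\Sp j|^2\,\dd x$ via Plancherel.

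For the first, the identity $\ft{f\Sp j}(t) = (-\iunit t)^j \chf(t)$ together with Plancherel gives $\int |f\Sp j|^2\,\dd x = \frac{1}{2\pi}\int t^{2j}|\chf(t)|^2\,\dd t$. For the second, I use $\ft{xg}(t) = -\iunit \ft g'(t)$ applied to $g = f\Sp j$, so that $\ft{xf\Sp j}(t) = -\iunit \,\Pd_t[(-\iunit t)^j \chf(t)] = -\iunit\bigl[-\iunit j(-\iunit t)^{j-1}\chf(t) + (-\iunit t)^j \chf'(t)\bigr]$. Taking moduli gives the pointwise bound $|\ft{xf\Sp j}(t)| \le j|t|^{j-1}|\chf(t)| + |t|^j|\chf'(t)|$, and by Plancherel and Minkowski
\begin{align*}
  \Grp{\int |xf\Sp j|^2\,\dd x}^{1/2}
  =
  \nth{\sqrt{2\pi}}\Grp{\int|\ft{xf\Sp j}(t)|^2\,\dd t}^{1/2}
  \le
  \nth{\sqrt{2\pi}}\Sbr{
    j\Grp{\int|t^{j-1}\chf(t)|^2\,\dd t}^{1/2}
    +\Grp{\int|t^j\chf'(t)|^2\,\dd t}^{1/2}
  }.
\end{align*}

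Setting $A=(\int|t^j\chf|^2\,\dd t)^{1/2}$, $B=j(\int|t^{j-1}\chf|^2\,\dd t)^{1/2}$, $C=(\int|t^j\chf'|^2\,\dd t)^{1/2}$, what I have so far is
\begin{align*}
  \int(1+x^2)|f\Sp j(x)|^2\,\dd x
  \le
  \frac{A^2}{2\pi} + \frac{(B+C)^2}{2\pi}.
\end{align*}
Using the elementary inequality $\sqrt{A^2 + D^2}\le A + D$ for $A,D\ge 0$ with $D=B+C$, the outer square root produces $A+B+C$, and combining with the prefactor $\sqrt\pi$ yields
\begin{align*}
  \int |f\Sp j(x)|\,\dd x \le \sqrt{\pi}\cdot \nth{\sqrt{2\pi}}(A+B+C)
  = \nth{\sqrt 2}(A+B+C),
\end{align*}
which is exactly the claimed bound.

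There is no real obstacle here; the only modestly delicate point is the decomposition of $\Pd_t[(-\iunit t)^j\chf(t)]$ so that $\chf$ enters with at most one derivative (which is the whole reason the lemma is stated in this particular form), and the passage from $\sqrt{A^2+(B+C)^2}$ to $A+B+C$, which slightly loosens the constant but yields a cleaner three-term right-hand side well-suited to the application in Section~\ref{ss:proof-1d}.
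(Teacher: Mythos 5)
Your proof is correct and takes essentially the same route as the paper's: Cauchy--Schwartz with the weight $(1+x^2)$, Plancherel to pass to Fourier side, and the product-rule split of $(t^j\chf(t))'$ with a Minkowski step to isolate $\chf$ and $\chf'$. The only cosmetic difference is the order in which you combine the $L^2$ norms — you apply Plancherel and Minkowski inside the $\snorm{xf\Sp j}_2$ term first and then invoke $\sqrt{A^2+D^2}\le A+D$, whereas the paper first splits $\sqrt{\snorm{f\Sp j}_2^2+\snorm{xf\Sp j}_2^2}$ via Minkowski and then transforms each term; both reorderings yield the same constant $1/\sqrt 2$.
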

\begin{proof}
  By Cauchy-Schwartz and Minkowski inequalities,
  \begin{align*}
    \int |f\Sp j(x)|\,\dd x
    &\le \Grp{\int \frac{\dd x}{1+x^2}}^{1/2}\Grp{
      \int |f\Sp j(x)|^2 (1+x^2)\,\dd x
    }^{1/2} \\
    &\le \sqrt{\pi}\Sbr{
      \Grp{\int |f\Sp j(x)|^2\,\dd x}^{1/2} 
      +
      \Grp{\int |x f\Sp j(x)|^2\,\dd x}^{1/2}
    }\\
    &
    = \nth{\sqrt{2}}\Sbr{
      \Grp{\int | t^j \chf(t)|^2\,\dd t}^{1/2}
      + \Grp{\int |(t^j\chf(t))'|^2\,\dd t}^{1/2}
    },
  \end{align*}
  where the last line follows from Plancherel theorem and the fact
  that Fourier transforms of $f\Sp j(x)$ and $x^j f(x)$ are $(-it)^j
  \chf(t)$ and $(-i)^j \chf\Sp j(t)$, respectively
  (\cite{grafakos:08}, p.~100-102).  The proof is complete by applying
  Minkowski inequality to the last integral.
\end{proof}

\begin{proof}[Proof of Lemma \ref{lemma:L1}]
  We only consider the case where $\sppt(\ld)\subset \Reals_+$.  The
  proof for the symmetric case is similar.  For brevity, write
  $f=f_\xi$, $\chf = \chf_\xi$, and $\che= \che_\xi$.  By Lemma
  \ref{lemma:rdf}, $f$, $\chf\in\rdf(\Reals)$.  Write $M = \rx^2 + B
  \sigma(r)^2$.  Then
  \begin{align*}
    \Re[\che(t)] = \Re[\che_W(t/\nu)]
    = 
    \int (1-\cos tu/\nu)[A\ld_r(\dd u) + B\cpg_r(\dd u)]
    + \frac{M t^2}{2 \nu^2}.
  \end{align*}
  If $|t|\le \nu/r$, then $|t u|/\nu\le 1$ for $0\le u< r$.  Since
  $1-\cos x \ge C_1^2 x^2/2$ for $|x|\le 1$,
  \begin{align*}
    \Re[\che(t)]
    &\ge 
    \frac{C_1^2 t^2}{2\nu^2} \int_0^r u^2[A\ld_r(\dd u) + B\cpg_r(\dd u)]
    + \frac{M t^2}{2\nu^2}
    \\
    &= 
    \frac{A C_1^2 \kappa_{2,X_r} t^2 }{2 \nu^2}
    + \frac{B C_1^2 m(r) s(r)^{p+3} t^2}{2 \nu^2}
    \int_0^{r/s(r)} u^{p+2} e^{-u}\,\dd u 
    + \frac{M t^2}{2\nu^2}.
  \end{align*}
  Since $s(r)<r/(p+3)$, 
  \begin{align*}
    \int_0^{r/s(r)} u^{p+2} e^{-u}\,\dd u
    \ge \int_0^{p+3} u^{p+2} e^{-u}\,\dd u
    \ge C_2\Gamma(p+3).
  \end{align*}
  Then by $\Gamma(p+3) m(r) s(r)^{p+3} = \kappa_{2,Y_r}$,
  \begin{align*}
    \Re[\che(t)]
    &\ge 
    \frac{A C_1^2 \kappa_{2, X_r} t^2}{2 \nu^2} +
    \frac{B C_1^2 m(r) s(r)^{p+3} C_2 \Gamma(p+3) t^2} {2 \nu^2} +
    \frac{M t^2}{2\nu^2} \\
    &\ge
    \frac{A C_1^2 \kappa_{2, X_r} t^2}{2 \nu^2} +
    \frac{B C_1^2 C_2 \kappa_{2, Y_r} t^2} {2 \nu^2} +
    \frac{B\sigma(r)^2 t^2}{2\nu^2}
    =
    \frac{D(r)^2 t^2}{2\nu^2}.
  \end{align*}
  If $|t|>\nu/r$, then $r>\nu/|t|$ and
  \begin{align*}
    \Re[\che(t)]
    \ge
    \frac{A C_1^2 t^2}{2\nu^2}\int_{u<\nu/|t|} u^2\ld(\dd u) 
    + \frac{B\sigma(r)^2 t^2}{2\nu^2}
    =
    H(t/\nu,r).
  \end{align*}
  Therefore, for $j\ge 0$,
  \begin{align}
    \int |t^j\chf(t)|^2\,\dd t
    &= 2\int_0^\infty t^{2j} e^{-2\Re[\che(t)]}\,\dd t 
    \nonumber \\
    &\le 2 \int_0^{\nu/r} t^{2j} e^{-D(r)^2 t^2/\nu^2}\,\dd t 
    + 2 \int_{\nu/r}^\infty t^{2j} e^{-2 H(t/\nu,r)}\,\dd t 
    \nonumber \\
    &\le 2 \int_0^\infty t^{2j} e^{-D(r)^2 t^2/\nu^2}\,\dd t 
    + 2 \nu^{2j+1} \int_{1/r}^\infty t^{2j} e^{-2 H(t,r)}\,\dd t
    \nonumber \\
    &\le \frac{\nu^{2j+1} \Gamma(j+1/2)}{D(r)^{2j+1}}
    + 2 \nu^{2j+1} \int_{1/r}^\infty t^{2j} e^{-2 H(t,r)}\,\dd t
    = 2 I_j(r)^2.  \label{e:Ij}
  \end{align}
  Next, $\chf'(t) = -\che'(t) \chf(t)$, with
  \begin{align*}
    \che'(t)
    &=
    \frac{\iunit}{\nu} \int (1-e^{\iunit t u/\nu})
    u\, [A\ld_r(\dd u) + B\cpg_r(\dd u)]
    + \frac{M t}{\nu^2}.
  \end{align*}
  By $|1-e^{\iunit x}|\le |x|$ for all $x\in\Reals$,
  \begin{align*}
    |\che'(t)|
    &\le 
    \frac{t}{\nu^2} \int u^2\, [A\ld_r(\dd u) + B\cpg_r(\dd u)]
    + \frac{M t}{\nu^2}
    \\
    & 
    = \frac{A \kappa_{2,X_r} t}{\nu^2}
    + \frac{B\kappa_{2,Y_r} t}{\nu^2} + 
    \frac{(\rx^2 + B\sigma(r)^2)t }{\nu^2}
    \\
    &
    =
    \frac{(A\kappa_{2,X_r} + B\kappa_{2,T_r})t}{\nu^2}
    +\frac{\rx^2 t}{\nu^2}
    = (1+\rx^2/\nu^2) t.
  \end{align*}
  As a result,
  \begin{align}
   \int |t^j \chf'(t)|^2 \,\dd t
    &
    = \int |t^j \che'(t) \chf(t)|^2\,\dd t
    \nonumber \\
    &\le 
    (1+\rx^2/\nu^2)^2 \int |t^{j+1} \chf(t)|^2 \,\dd t
    \le
    2(1+\rx^2/\nu^2)^2 I_{j+1}(r)^2.
    \label{e:Ij-2}
  \end{align}
  The proof is complete by combining Lemma \ref{lemma:L1-deriv},
  \eqref{e:Ij} and \eqref{e:Ij-2}.
\end{proof}

\subsection{Lemmas for multivariate case}
In this section, we prove Lemmas \ref{lemma:mv-moments} --
\ref{lemma:L1-mv-2}.  Recall that $\std X = \SRM_\tau^{-1} X_\tau$ and
$\std Y = \SRM_\tau^{-1} Y_\tau$ have $\levy$ measures $\std\ld(\dd u,
\dd \theta) = \ld_\tau(\dd v, \dd \omega)$ and $\std\cpg(\dd
u,\dd\theta) = \cpg_\tau(\dd v, \dd\omega)$, respectively, where
$(v,\omega) = J_{\SRM_\tau}^{-1}(u, \theta)$, i.e.
$v = \snorm{\SRM_\tau\theta} u$, $\omega = \SRM_\tau \theta /
\snorm{\SRM_\tau \theta}$.  Note that
\begin{align*}
  v\omega = u \SRM_\tau\theta, 
  \quad
  \snorm{\SRM_\tau^{-1} \omega} = \nth{\snorm{\SRM_\tau\theta}}.
\end{align*}
Then
\begin{align*}
  \std\ld(\dd u, \dd\theta)
  &=
  \cf{v <
    r_\tau(\omega)}\ld(\dd v\gv\omega)\, \nu(\dd\omega)
  =
  \cf{u< \std r(\theta)} \std\ld(\dd u\gv\theta)\,
  \std\nu(\dd \theta),
\end{align*}
where
\begin{align*}
  \std r(\theta) = r_\tau(\omega)/\snorm{\SRM_\tau\theta}
  = \snorm{\SRM_\tau^{-1}\omega} r_\tau(\omega),
  \quad
  \std\ld(\dd u\gv\theta) = \ld(\dd v\gv\omega), \quad
  \std\nu(\dd\theta) = \nu(\dd\omega).
\end{align*}
Similarly,
\begin{align*}
  \std\cpg(\dd u, \dd\theta)
  &=
  m_\tau(\omega) \cf{v>0} 
  v^{p_\tau(\omega)} e^{-v/s_\tau(\omega)}
  \,\dd v\, \nu(\dd \omega)
  \nonumber \\
  &=
  m_\tau(\omega) \cf{u>0} \snorm{\SRM_\tau\theta}^{p_\tau(\omega)+1} 
  u^{p_\tau(\omega)}
  e^{-\snorm{\SRM_\tau\theta} u/s_\tau(\omega)}\, \dd u\, \nu(\dd\omega)
  \nonumber \\
  &=
  \std m(\theta) \cf{u>0} u^{\std p(\theta)}
  e^{-u/\std s(\theta)}\,\dd u\, \std\nu(\dd\theta),
\end{align*}
where 
\begin{align*}
  \std m(\theta) = m_\tau(\omega)
  \snorm{\SRM_\tau\theta}^{p_\tau(\omega)+1}, \quad
  \std p(\theta) = p_\tau(\omega), \quad
  \std s(\theta) = s_\tau(\omega)/\snorm{\SRM_\tau\theta}
  = \snorm{\SRM_\tau^{-1}\omega} s_\tau(\omega).
\end{align*}
Therefore,
\begin{align*}
  \std\cpg(\dd u, \dd\theta) =  \std\cpg(\dd u\gv\theta)
  \std\nu(\theta), \
  \text{with}\
  \std\cpg(\dd u\gv\theta)
  =
  \std m(\theta) \cf{u> 0} u^{\std p(\theta)}
  e^{-u/\std s(\theta)}\,\dd u.
\end{align*}
\begin{lemma} \label{lemma:essential}
  If \eqref{e:mv-p-s-r} holds under $\nu$, then
  \begin{align*}
    \ess\inf \std p(\theta) >-2,
    \quad
    \ess\sup \frac{[\std p(\theta)+3] \std s(\theta)}{\std
      r(\theta)} \le 1, \quad
    \text{under $\std\nu$}.
  \end{align*}
  Furthermore, for $\std R$ and $\std S$ in \eqref{e:R-S-std}
  defined under $\nu$, we have
  \begin{align*}
    \std R = \ess\sup \std r(\theta), \quad
    \std S = \ess\sup [\std p(\theta) + h(d)+1]\std
    s(\theta), \quad
    \text{under $\std\nu$}.
  \end{align*}
  Finally, for $\varrho_\tau(a)$ is defined in \eqref{e:root-std}, we
  have
  \begin{align*}
    \begin{split}
      \varrho_\tau(z) 
      &=
      \text{smallest eigenvalue of } C_1^2 \std M(z)
      \text{ and } \std K
      \\
      \text{with}\ \;
      \std M(z)
      &= 
      \int u^2\theta\theta' \cf{u<\std r(\theta)\wedge
        z}  \std\ld(\dd u, \dd \theta).
    \end{split}
  \end{align*}
\end{lemma}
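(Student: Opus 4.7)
The plan is to trace everything back to the change of polar variables implicit in the standardization $\std X = \SRM_\tau^{-1} X_\tau$. By Lemma \ref{lemma:standard} and the disintegration worked out just before the lemma statement, $\std\nu$ is the pushforward of $\nu$ under the measurable bijection $\Phi(\omega) = \SRM_\tau^{-1}\omega/\snorm{\SRM_\tau^{-1}\omega}$, while $\std r(\theta) = \snorm{\SRM_\tau^{-1}\omega}r_\tau(\omega)$, $\std s(\theta) = \snorm{\SRM_\tau^{-1}\omega}s_\tau(\omega)$, and $\std p(\theta) = p_\tau(\omega)$, with $\omega = \Phi^{-1}(\theta) = \SRM_\tau\theta/\snorm{\SRM_\tau\theta}$. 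Because $\Phi$ is a bijection modulo null sets, the essential infimum and supremum under $\std\nu$ of any function of $\theta$ coincide with those of its composition with $\Phi^{-1}$ under $\nu$; this single observation will dispatch the assertions about $\std p$, $\std R$, and $\std S$.

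For the claim on $\std p$, I would simply write $\ess\inf \std p(\theta) = \ess\inf p_\tau(\omega) \ge -1 > -2$ (with the two essential infima taken under $\std\nu$ and $\nu$, respectively), invoking \eqref{e:mv-p-s-r}. For the ratio, the factor $\snorm{\SRM_\tau^{-1}\omega}$ cancels, giving
$$\frac{[\std p(\theta)+3]\std s(\theta)}{\std r(\theta)} = \frac{[p_\tau(\omega)+3]s_\tau(\omega)}{r_\tau(\omega)},$$
whose ess-sup under $\std\nu$ equals the one bounded by $1$ in \eqref{e:mv-p-s-r}. The identities $\std R = \ess\sup \std r(\theta)$ and $\std S = \ess\sup[\std p(\theta)+h(d)+1]\std s(\theta)$ under $\std\nu$ are simply the definitions in \eqref{e:R-S-std} reread after renaming the $\nu$-dummy variable from $\theta$ to $\omega$, once one observes the matching identity $[\std p(\theta)+h(d)+1]\std s(\theta) = [p_\tau(\omega)+h(d)+1]\snorm{\SRM_\tau^{-1}\omega}s_\tau(\omega)$.

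The only slightly more substantive step is the $\varrho_\tau$ formula, which I would reduce to the single matrix identity $\SRM_\tau^{-1}M_\tau(z)\SRM_\tau^{-1} = \std M(z)$; once this is in hand, the smallest-eigenvalue description relative to the pair $\{C_1^2\SRM_\tau^{-1}M_\tau(z)\SRM_\tau^{-1},\std K\}$ and that relative to $\{C_1^2\std M(z),\std K\}$ plainly coincide. To prove the identity, I would start from $M_\tau(z)$ in \eqref{e:root-std} as a $\nu$-integral over $(v,\omega)$, conjugate $\SRM_\tau^{-1}$ inside, factor $\SRM_\tau^{-1}\omega = \snorm{\SRM_\tau^{-1}\omega}\theta$, and substitute $u = \snorm{\SRM_\tau^{-1}\omega}v$. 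The quadratic factor $v^2\snorm{\SRM_\tau^{-1}\omega}^2$ collapses to $u^2$; the two indicators become $\cf{u<\std r(\theta)}$ and $\cf{u<z}$; and by Lemma \ref{lemma:standard} the underlying measure $\ld_\tau(\dd v\gv\omega)\nu(\dd\omega)$ pushes forward to $\std\ld(\dd u,\dd\theta)$, producing $\std M(z)$. There is no genuine analytic obstacle anywhere; the only real care needed is notational, namely keeping the roles of $\omega$ (under $\nu$) and $\theta$ (under $\std\nu$) distinct and verifying that every Jacobian-type factor $\snorm{\SRM_\tau^{-1}\omega}$ either cancels in a ratio or is absorbed into the radial rescaling.
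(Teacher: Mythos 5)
Your proof is correct and takes essentially the same route as the paper's: the ess-sup/ess-inf assertions reduce to the observation that $\std\nu$ is the pushforward of $\nu$ under $\Phi$ and all the relevant functions transform as you describe, and the $\varrho_\tau$ claim reduces to the matrix identity $\std M(z)=\SRM_\tau^{-1}M_\tau(z)\SRM_\tau^{-1}$, which the paper also proves by the same change of variables $(u,\theta)=J_{\SRM_\tau}(v,\omega)$ (the paper merely runs the substitution in the opposite direction, from $\SRM_\tau\std M(z)\SRM_\tau$ to $M_\tau(z)$). The only cosmetic difference is that the paper declares the non-$\varrho_\tau$ parts ``straightforward'' and omits them, whereas you spell them out.
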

\begin{proof}
  The lemma is straightforward except for the assertion on
  $\varrho_\tau(a)$.  By change of variable $(u,\theta) =
  J_{\SRM_\tau}(v, \omega)$, 
  \begin{align*}
    \SRM_\tau \std M(z) \SRM_\tau
    &=
    \int (u\SRM_\tau \theta)(u\SRM_\tau \theta)'
    \cf{u< \std r(\theta)\wedge z} \,\std\ld(\dd u,\dd\theta)
    \\
    &=
    \int (v\omega) (v\omega)'
    \cf{
      \snorm{\SRM_\tau^{-1}\omega} v <
      (\snorm{\SRM_\tau^{-1}\omega} r_\tau(\omega)) \wedge z
    } \, \ld_\tau(\dd v, \dd\omega).
  \end{align*}
  Since the right hand side is $M_\tau(z)$, $\std M(z) =
  \SRM_\tau^{-1} M(z) \SRM_\tau^{-1}$.  Then the assertion on
  $\varrho_\tau(z)$ follows.
\end{proof}

\begin{proof}[Proof of Lemma \ref{lemma:mv-moments}]
  Fix $a\ge 3$.  Since the $\levy$ measure $\std\ld$ of $\std X$ has
  bounded support according to condition \eqref{e:R-S-std} and Lemma
  \ref{lemma:essential}, $\mean \snorm{\std X}^a<\infty$.
  \begin{align*}
    \int u^a \std\cpg(\dd u, \dd\theta)
    &=
    \int\std m(\theta) \nu(\dd\theta) \int u^{\std p(\theta)+a}
    e^{-u/\std s(\theta)}\,\dd u \\
    &=
    \int \std m(\theta)
    \Gamma(\std p(\theta)+a+1)\std s(\theta)^{\std p(\theta)+a+1}
    \,\nu(\dd\theta).
  \end{align*}
  Letting
  \begin{align*}
    b = \ess\sup 
    \frac{\Gamma(\std p(\theta)+a+1) \std
      s(\theta)^{a-2}}{\Gamma(\std p(\theta)+3)},
  \end{align*}
  then, by the construction of $\std Y$,
  \begin{align*}
    \int u^a \std\cpg(\dd u, \dd\theta)
    &\le
    b \int \std m(\theta) \Gamma(\std p(\theta)+3)
    \std s(\theta)^{\std p(\theta)+3}\,\nu(\dd\theta)
    \\
    &=
    b \int \nu(\dd\theta) \int u^2 \std\ld_\tau(\dd u\gv\theta)
    = b \int u^2 \std\ld(\dd u,\dd\theta).
  \end{align*}
  Since $b\le \ess\sup[(\std p(\theta)+a) \std s(\theta)]^{a-2}$ and
  by conditions \eqref{e:mv-p-s-r} and \eqref{e:R-S-std} as well as
  Lemma \ref{lemma:essential},
  \begin{align*}
    \ess\sup[(\std p(\theta) + a) \std s(\theta)]
    \le
    \ess\sup \frac{\std p(\theta) + a}{\std p(\theta)+3}
    < \infty,
  \end{align*}
  then $b<\infty$.   Thus $\int u^a
  \std\cpg(\dd u,\dd\theta)<\infty$, giving $\mean\snorm{\std
    Y}^a<\infty$.
\end{proof}

\begin{lemma} \label{lemma:Lambda-derivatives}
  Given $\tau>0$, under conditions \eqref{e:mv-p-s-r} and
  \eqref{e:R-S-std}, $\che_{\std X}$ and $\che_{\std Y}\in
  C^\infty(\Reals^d)$ such that
  \begin{align*}
    |\Pd_j \che_{\std X}(t)| \le d\snorm t, \quad
    |\Pd_j \che_{\std T}(t)| \le d\snorm t, \quad j=1,\ldots, d
  \end{align*}
  and for $|\alpha|\ge 2$,
  \begin{align*}
    |\Pd^\alpha \che_{\std X}(t)|
    \le
    (\std R)^{|\alpha|-2} d, \quad
    |\Pd^\alpha \che_{\std T}(t)|
    \le
    \Grp{
      \std S\ess\sup \frac{\std p(\theta)+|\alpha|}{\std p(\theta)+h(d)+1}
    }^{|\alpha|-2} d.
  \end{align*}
  In particular, if $2\le|\alpha|\le h(d)$, then $|\Pd^\alpha
  \che_{\std T}(t)|\le (\std S)^{|\alpha|-2} d$.
\end{lemma}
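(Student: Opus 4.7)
The approach is to differentiate the characteristic exponents under the integral sign and bound the resulting integrands using the explicit L\'evy data. Smoothness of $\che_{\std X}$ and $\che_{\std Y}$ follows from dominated convergence, since $\std\ld$ has bounded support in $u$ (by $\std R$, via Lemma~\ref{lemma:essential}) and $\std\cpg$ has all $u$-moments finite (by the argument of Lemma~\ref{lemma:mv-moments}). Routine calculation gives, for $|\alpha|\ge 2$,
\begin{align*}
  \Pd^\alpha\che_{\std X}(t) = -\int (\iunit u\theta)^\alpha\, e^{\iunit u\ip t\theta}\,\std\ld(\dd u,\dd\theta),
\end{align*}
and for $|\alpha|=1$, $\Pd_j\che_{\std X}(t)=\iunit\int u\theta_j(1-e^{\iunit u\ip t\theta})\,\std\ld(\dd u,\dd\theta)$; analogous formulas hold for $\che_{\std Y}$ with $\std\cpg$ in place of $\std\ld$.

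For $\che_{\std X}$, the first-order bound uses $|1-e^{\iunit x}|\le|x|$, $|\theta_j|\le 1$, $|\ip t\theta|\le\snorm t$, and $\int u^2\,\std\ld = \tr(\var(\std X)) = d$ (from standardization of $\std X$), giving $|\Pd_j\che_{\std X}(t)|\le d\snorm t$. For $|\alpha|\ge 2$, using $|\theta^\alpha|\le 1$ and the truncation $u<\std r(\theta)\le\std R$ from Lemma~\ref{lemma:essential}, I write $u^{|\alpha|}\le(\std R)^{|\alpha|-2}u^2$ to obtain $|\Pd^\alpha\che_{\std X}(t)|\le(\std R)^{|\alpha|-2}\,d$.

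For $\che_{\std T}=\che_{\std Y}+\tfrac12 t'\std K t$, the Gaussian piece contributes only when $|\alpha|\le 2$. At $|\alpha|=1$, $|(\std K t)_j|\le\tr(\std K)\snorm t$ (since $\snorm{\std K}_{\mathrm{op}}\le\tr(\std K)$), which combined with $\tr(\var(\std Y))\snorm t$ from the L\'evy piece gives $d\snorm t$, because $\tr(\std K)+\tr(\var(\std Y))=\tr(I)=d$. For $|\alpha|\ge 3$ the Gaussian part vanishes, and the bound reduces to controlling $\int u^{|\alpha|}\,\std\cpg$. Evaluating the Gamma integral,
\begin{align*}
  \int u^{|\alpha|}\,\std\cpg(\dd u,\dd\theta) = \int \std m(\theta)\,\Gamma(\std p(\theta)+|\alpha|+1)\,\std s(\theta)^{\std p(\theta)+|\alpha|+1}\,\std\nu(\dd\theta),
\end{align*}
and comparing to $\int u^2\,\std\cpg\le d$, the pointwise ratio of integrands is $\prod_{j=3}^{|\alpha|}(\std p(\theta)+j)\std s(\theta)\le [(\std p(\theta)+|\alpha|)\std s(\theta)]^{|\alpha|-2}$. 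Writing $(\std p(\theta)+|\alpha|)\std s(\theta) = [(\std p(\theta)+h(d)+1)\std s(\theta)]\cdot(\std p(\theta)+|\alpha|)/(\std p(\theta)+h(d)+1)$ and passing to $\ess\sup$ in each factor yields the claimed $[\std S\cdot\ess\sup(\std p(\theta)+|\alpha|)/(\std p(\theta)+h(d)+1)]^{|\alpha|-2}d$. For $|\alpha|=2$ a sharper argument applies: $|\theta_i\theta_j|\le(\theta_i^2+\theta_j^2)/2$ and $|\std K_{ij}|\le(\std K_{ii}+\std K_{jj})/2$ combine to give $|\Pd^\alpha\che_{\std T}(t)|\le\tfrac12(\var(\std T)_{ii}+\var(\std T)_{jj})=1\le d$.

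The final assertion follows because when $|\alpha|\le h(d)$, the ratio $(\std p(\theta)+|\alpha|)/(\std p(\theta)+h(d)+1)\le 1$ a.e.\ (using $\std p(\theta)>-2$ from Lemma~\ref{lemma:essential}), so the $\ess\sup$ factor collapses and the bound becomes $(\std S)^{|\alpha|-2}d$. The main bookkeeping subtlety is the $|\alpha|\ge 3$ step, where the Gamma moment ratio must be arranged so that $\std S$ appears cleanly as one factor and $(\std p+|\alpha|)/(\std p+h(d)+1)$ as the other; this requires care because products of essential suprema are not generally suprema of products, so the pointwise factorization has to be established before taking $\ess\sup$.
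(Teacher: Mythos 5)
Your proof follows essentially the same route as the paper's: differentiate under the integral (justified by finiteness of moments from Lemma~\ref{lemma:mv-moments}), bound the first-order derivatives via $|1-e^{\iunit x}|\le|x|$ and $\tr(\var(\std X))=\tr(\var(\std T))=d$, bound the higher-order $\che_{\std X}$-derivatives by extracting $(\std R)^{|\alpha|-2}$ from the truncated support, and bound the higher-order $\che_{\std Y}$-derivatives via the Gamma-moment ratio $\Gamma(\std p+|\alpha|+1)/\Gamma(\std p+3)\cdot\std s^{|\alpha|-2}=\prod_{j=3}^{|\alpha|}[(\std p+j)\std s]$, which you correctly factor pointwise before taking $\ess\sup$. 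Your separate treatment of $|\alpha|=2$ for $\che_{\std T}$ (yielding the sharper $1\le d$) is a small embellishment; the paper simply notes $|\Pd^\alpha\che_{\std Z}(t)|\le\tr(\std K)\cf{|\alpha|=2}$ and sums with $C\,\tr(\var(\std Y))$, which for $|\alpha|=2$ gives $\tr(\var(\std Y))+\tr(\std K)=d$ with $C=1$. Both arguments are correct and structurally identical.
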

\begin{proof}
  By Lemma \ref{lemma:mv-moments}, $\int \snorm u^a \std\lambda(\dd u,
  \dd\theta)<\infty$ for $a\ge 2$, so by dominated convergence,
  $\che_{\std X}\in C^\infty(\Reals^d)$, $\Pd_j\che_{\std X}(t) =
  \int \iunit u\theta_j (1-e^{\iunit u\ip t\theta}) \std\ld(\dd
  u,\dd\theta)$, and $\Pd^\alpha\che_{\std X}(t)= -\int
  (\iunit u \theta)^\alpha e^{\iunit u \ip t\theta} \,\std\ld(\dd
  u,\dd\theta)$ for $|\alpha|\ge 2$.  For $\che_{\std Y}$, similar
  formulas hold.  From $|\Pd_j \che_{\std X}(t)|\le \snorm t\int
  u^2 \,\std\ld(\dd u,\dd\theta)  = \snorm t \tr(\var(\std X))$, the
  first inequality follows.  Likewise, $|\Pd_j \che_{\std
    Y}(t)|\le \snorm t \tr(\var(\std Y))$.  Together with $\nth
  2|\Pd_j (t'\std K t)| \le \snorm{\std K t} \le \snorm t\tr(\std K)$,
  this implies the second inequality.  Next, for $|\alpha|\ge 2$, from
  \begin{align*}
    |\Pd^\alpha \che_{\std X}(t)|
    &\le
    \int u^{|\alpha|}\,
    \cf{u\le \std r(\theta)} \std\ld(\dd u, \dd\theta)
    \\
    &\le
    (\std R)^{|\alpha|-2} \int u^2 \,\std\ld(\dd u,\dd\theta)
    =
    (\std R)^{|\alpha|-2} \tr(\var(\std X)),
  \end{align*}
  the third inequality follows.  Finally, for $|\alpha|\ge 2$, let
  \begin{align*}
    C = 
    \ess\sup \Sbr{\std s(\theta)^{|\alpha|-2} 
      \frac{\Gamma(\std p(\theta)+|\alpha|+1)}{
        \Gamma(\std p(\theta)+3)
      }
    }.
  \end{align*}
  Then by Lemma \ref{lemma:essential},
  \begin{align*}
    |\Pd^\alpha \che_{\std Y}(t)|
    &\le
    \int \std\nu(\dd\theta)
    \int_0^\infty u^{|\alpha|}  \,\std\cpg(\dd u\gv\theta)
    \\
    &=
    \int \std m(\theta)
    [\std s(\theta)]^{\std p(\theta)+|\alpha|+1}
    \Gamma(\std p(\theta)+|\alpha|+1)\,\std\nu(\dd\theta) \\
    &\le
    C
    \int \std m(\theta)
    [\std s(\theta)]^{\std p(\theta) +3}
    \Gamma(\std p(\theta)+3)\,\std\nu(\dd\theta) \\
    &
    = C
    \int u^2  \,\std\cpg(\dd u, \dd\theta)
    =
    C\tr(\var(\std Y)).
  \end{align*}
  Since $|\alpha|\ge 2$, then by \eqref{e:R-S-std} and Lemma
  \ref{lemma:essential},
  \begin{align*}
    C \le \Sbr{\ess\sup 
      \std s(\theta) (\std p(\theta)+|\alpha|)
    }^{|\alpha|-2}
    \le 
    \Grp{\std S\ess\sup 
      \frac{\std p(\theta)+|\alpha|}{\std
        p(\theta)+h(d)+1}
    }^{|\alpha|-2}.
  \end{align*}
  Also, $|\Pd^\alpha \che_{\std Z}(t)| \le \tr(\std K)
  \cf{|\alpha|=2}$.  We therefore get the last inequality.
\end{proof}

\begin{lemma} \label{lemma:Lambda-bounds}
  Let $C_1$ and $C_2$ be the same constants defined in Section
  \ref{ss:error-1d}.  Given $\tau>0$, under conditions
  \eqref{e:mv-p-s-r} and \eqref{e:R-S-std}, the following statements
  are true.
  
  1) If $\snorm t\le 1/\,\std R$, then
  \begin{align*}
    \Re[\che_{\std X}(t)]
    \ge
    \frac{C_1^2 \snorm t^2}{2}, 
    \quad
    \Re[\che_{\std T}(t)]
    \ge
    \frac{C_1^2 C_2 \snorm t^2}{2}.
  \end{align*}
  
  2) If $\snorm t>1/\,\std R$, then
  \begin{align*}
    \Re[\che_{\std X}(t)]
    \ge
    \frac{\varrho_\tau(1/\snorm t)\snorm t^2}{2},
    \quad
    \Re[\che_{\std T}(t)]
    \ge \frac{t'\,\std K\,t}{2}.
  \end{align*}
\end{lemma}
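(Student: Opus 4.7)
The plan is to work directly from the Lévy–Khintchine representations of $\std X$ and $\std T$ in the standardized coordinates established earlier in Section \ref{ss:proof-mv}. Writing $\Re[\che_{\std X}(t)] = \int (1-\cos(u\ip t\theta))\,\std\ld(\dd u,\dd\theta)$ and $\Re[\che_{\std T}(t)] = \int (1-\cos(u\ip t\theta))\,\std\cpg(\dd u,\dd\theta) + \tfrac12 t'\std K t$, the whole lemma reduces to two things: (i) localizing the integrands to the region where $|u\ip t\theta|\le 1$ so that the elementary inequality $1-\cos x \ge C_1^2 x^2/2$ (valid for $|x|\le 1$ with $C_1 = \sin 1$) applies, and (ii) showing the resulting restricted quadratic forms dominate $\snorm t^2$, $\varrho_\tau(1/\snorm t)\snorm t^2$, or $t'\std K t$ as required.

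For Part 1, assume $\snorm t \le 1/\std R$. By Lemma \ref{lemma:essential}, $\std\ld$ is concentrated on $u < \std r(\theta) \le \std R \le 1/\snorm t$, so $|u\ip t\theta|\le u\snorm t \le 1$ on the whole support and the pointwise inequality applies, yielding
\begin{align*}
\Re[\che_{\std X}(t)] \ge \tfrac{C_1^2}{2} \textstyle\int (u\ip t\theta)^2 \,\std\ld(\dd u,\dd\theta) = \tfrac{C_1^2}{2}\,t'\var(\std X)t = \tfrac{C_1^2}{2}\snorm t^2,
\end{align*}
since $\var(\std X)=I$. For $\che_{\std T}$, restrict the $\std\cpg$-integral to $u\le 1/\snorm t$ and apply the inequality to get $\tfrac{C_1^2}{2}\int \cf{u\le 1/\snorm t}(u\ip t\theta)^2 \,\std\cpg(\dd u,\dd\theta)$. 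The key step is to rewrite this integral by substituting $v=u/\std s(\theta)$, producing $\int \ip t\theta^2 \std m(\theta)\std s(\theta)^{\std p(\theta)+3} \int_0^{1/(\snorm t \std s(\theta))} v^{\std p(\theta)+2}e^{-v}\,\dd v\,\std\nu(\dd\theta)$, and to use $(\std p(\theta)+3)\std s(\theta)\le \std r(\theta) \le 1/\snorm t$ (from Lemma \ref{lemma:essential}) to ensure the upper limit exceeds $\std p(\theta)+3$. Then by the defining infimum for $C_2$, the inner integral is at least $C_2\Gamma(\std p(\theta)+3)$, so the expression collapses to $C_2\,t'\var(\std Y)t$. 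Adding the Gaussian contribution $\tfrac12 t'\std K t$ and using $C_1^2 C_2 \le 1$ together with $\var(\std Y)+\std K=I$ gives the desired $\tfrac{C_1^2 C_2}{2}\snorm t^2$.

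For Part 2, assume $\snorm t > 1/\std R$. Here the $\std\ld$-support no longer lies inside $\{u\le 1/\snorm t\}$, so we restrict by hand: dropping the integrand on $u > 1/\snorm t$ (using $1-\cos\ge 0$) and applying the pointwise bound on the remainder gives $\Re[\che_{\std X}(t)] \ge \tfrac{C_1^2}{2}\,t'\std M(1/\snorm t)\,t$. The definition of $\varrho_\tau(z)$ as the smallest eigenvalue of both $C_1^2 \std M(z)$ and $\std K$ (Lemma \ref{lemma:essential}) immediately upgrades this to $\tfrac{\varrho_\tau(1/\snorm t)}{2}\snorm t^2$. For $\che_{\std T}$, simply discard the nonnegative $\std\cpg$-integral to retain $\Re[\che_{\std T}(t)] \ge \tfrac12 t'\std K t$.

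The only nontrivial step is the Gamma-integral manipulation in Part 1 for $\che_{\std T}$: one must carefully arrange the substitution and invoke the inequality $(\std p+3)\std s \le \std r$ so that $C_2$ can be pulled out uniformly. The rest is bookkeeping on quadratic forms and trivial domination of $1-\cos x$.
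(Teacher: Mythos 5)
Your proposal is correct and follows essentially the same route as the paper: localize to the region where $|u\ip t\theta|\le 1$ so that $1-\cos x\ge C_1^2 x^2/2$ applies, then reduce to quadratic forms via $\var(\std X)=\var(\std T)=I$ and the definitions of $\std M(z)$, $\varrho_\tau$, and $C_2$. The only (cosmetic) difference is in the $\che_{\std T}$ bound of Part 1: the paper truncates the Gamma integral at $u\le \std s(\theta)[\std p(\theta)+3]$ directly, whereas you truncate at $u\le 1/\snorm t$ and then further restrict the inner integral to $[0,\std p(\theta)+3]$ before invoking $C_2$; since $\std s(\theta)[\std p(\theta)+3]\le \std r(\theta)\le \std R\le 1/\snorm t$, these give the same bound.
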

\begin{proof}
  1) Given $\snorm t\le 1/\std R$, by Lemma \ref{lemma:essential},
  for $\std\nu$-a.e.\ $\theta\in S$ and $0<u<\std r(\theta)$,
  $|\ip t\theta u|\le |\ip t\theta|  \std r(\theta)\le 1$, yielding
  $1-\cos(\ip t\theta u) \ge C_1^2 \ip t\theta^2 u^2/2$.  Therefore,
  \begin{align*}
    \Re[\che_{\std X}(t)]
    &=
    \int [1-\cos(\ip t\theta u)] \cf{u<\std r(\theta)}
    \std\ld(\dd u\gv\theta) \,\std\nu(\dd\theta)
    \\
    &\ge
    \frac{C_1^2}{2} \int \ip t\theta^2 u^2 
    \cf{u<\std r(\theta)}
    \std\ld(\dd u\gv\theta)\, \std\nu(\dd\theta)
    =
    \frac{C_1^2}{2} \int \ip t\theta^2 u^2
    \,\std\ld(\dd u, \dd \theta).
  \end{align*}
  Since the last integral equals $t'\var(\std X) t = \snorm t^2$,
  we get the first inequality in 1).  Next, by \eqref{e:R-S-std} and
  Lemma \ref{lemma:essential}, for $\std\nu$-a.e.\ $\theta\in S$ and
  $0<u\le \std s(\theta) [\std p(\theta)+3]$, $|\ip t\theta u| \le
  \snorm t u \le \std r(\theta)/\std R\le 1$.  Then
  \begin{align*}
    \int_0^\infty [1-\cos(\ip t\theta u)] \std\cpg(\dd u\gv\theta)
    &\ge
    \int_0^{\std s(\theta)[\std p(\theta)+3]}
    [1-\cos(\ip t\theta u)]  \,\std\cpg(\dd u\gv\theta)
    \\
    &\ge
    \frac{C_1^2}{2}
    \int_0^{\std s(\theta)[\std p(\theta)+3]}
    \ip t\theta^2 u^2 \, \std\cpg(\dd u\gv\theta)
    \\
    &\ge
    \frac{C_1^2 C_2}{2}
    \int_0^\infty
    \ip t\theta^2 u^2 \,\std\cpg(\dd u\gv\theta).
  \end{align*}
  Then
  \begin{align*}
    \Re[\che_{\std Y}(t)]
    &=
    \int [1-\cos(\ip t\theta u)] \,\std \cpg(\dd u\gv\theta)
    \, \std\nu(\dd\theta)
    \\
    &\ge
    \frac{C_1^2 C_2}{2}
    \int \ip t\theta^2 u^2 \,\std\cpg(\dd u,\dd\theta)
    =
    \frac{C_1^2 C_2}{2} t'\var(\std Y) t
  \end{align*}
  and hence
  \begin{align*}
    \Re[\che_{\std T}(t)]
    &=
    \Re[\che_{\std Y}(t)] + \Re[\che_{\std Z}(t)]
    \\
    &\ge
    \frac{C_1^2 C_2}{2} t'\var(\std Y) t + \frac{1}{2} t'\std K t
    \ge
    \frac{C_1^2 C_2}{2} t'[\var(\std Y) + \std K] t.
  \end{align*}
  Since $\var(\std Y) + \std K = \var(\std T) = I$, then the second
  inequality in 1) follows.

  2) The first inequality follows from 
  \begin{align*}
    \Re[\che_{\std X}(t)]
    &\ge
    \int [1-\cos(\ip t\theta u)]
    \cf{u<\std r(\theta)\wedge \nth{\snorm{t}}}
    \std\ld(\dd u, \dd\theta) 
    \\
    &\ge
    \frac{C_1^2}{2} \int \ip t\theta^2 u^2 
    \cf{u<\std r(\theta)\wedge \nth{\snorm{t}}}
    \std\ld(\dd u, \dd\theta)
    =
    \frac{t'\std M(1/\snorm t) t}{2}
  \end{align*}
  and Lemma \ref{lemma:essential}, while the second one from
  $\Re[\che_{\std T}(t)] \ge \Re[\che_{\std Z}(t)] = t'\std K
  t/2$.
\end{proof}

\begin{proof}[Proof of Lemma \ref{lemma:mv-smooth}]
  The proof for 1) is completely similar to that for 1) of Lemma
  \ref{lemma:rdf}, except that it is based on Lemma
  \ref{lemma:Lambda-derivatives}.  To prove 2), let $k=h(d)$.  By
  Cauchy-Schwartz inequality,
  \begin{align*}
    \int \snorm t^q |\chf_{\std X}(t)|\,\dd t
    \le
    \Grp{
      \int \snorm t^{2q}(1+\snorm t^{2k}) |\chf_{\std X}(t)|^2\,\dd t
    }^{1/2}
    \Grp{
      \int \frac{\dd t}{1+\snorm t^{2k}}
    }^{1/2}.
  \end{align*}
  The second factor on the right hand side is finite.  By 2) of
  Lemma \ref{lemma:Lambda-bounds}, for $\snorm t > 1/\std R$,
  $|\chf_{\std X}(t)| \le \exp\{-\Re[\che_{\std X}(t)]\}
  \le \exp\{-\varrho_\tau(1/\snorm t) \snorm t^2/2\}$.  Then
  \begin{align*}
    \int_{\snorm t>1/\std R}
    \snorm t^{2q}(1+\snorm t^{2k}) |\chf_{\std X}(t)|^2\,\dd t
    &\le 
    \int_{\snorm t>1/\std R}
    \snorm t^{2q}(1+\snorm t^{2k})
    e^{-\varrho_\tau(1/\snorm t) \snorm t^2}\,\dd t
    \\
    &\le
    c(d) \int s^{2q+d-1} (1+s^{2k})
    e^{-\varrho_\tau(1/s) s^2}\,\dd s,
  \end{align*}
  where $c(d)$ is a universal constant.  Then by condition
  \eqref{e:exp-L-mv}, $\snorm t^q |\chf_{\std X}(t)|\in L^1(\Reals^d)$
  and the proof follows from Proposition 28.1 of \cite{sato:99}. 
\end{proof}

To prove Lemma \ref{lemma:L1-mv-2}, we use the following variant of
Lemma 11.6 of \cite{bhattacharya:76} which involves lower order of
partial derivatives.
\begin{lemma} \label{lemma:L1-mv}
  For $m\ge 1$, there is a constant $c_1(d, m)$, such that for $f\in
  \rdf(\Reals^d)$,
  \begin{align*}
    \max_{|\alpha|=m} \int |f\Sp\alpha(x)|\,\dd x 
    \le
    c_1(d,m) \max\Cbr{
      \sqrt{\int |t^\beta
        \Pd_i^j \ft f(t)|^2\,\dd t}:
      |\beta|\le m,\, 1\le i\le d,\, 0\le j\le h(d)
    }.
  \end{align*}
\end{lemma}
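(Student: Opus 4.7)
The plan is to mimic the univariate argument in Lemma \ref{lemma:L1-deriv}, using Cauchy--Schwarz to trade the $L^1$ norm for an $L^2$ norm against a weight whose reciprocal is integrable on $\Reals^d$, then converting to the Fourier side by Plancherel. The key point is that with $k=h(d)=\Flr{d/2}+1$, we have $2k>d$, so $(1+\snorm x^{2k})^{-1}\in L^1(\Reals^d)$.

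Fix $\alpha$ with $|\alpha|=m$. Cauchy--Schwarz gives
\begin{align*}
  \int |f\Sp\alpha(x)|\,\dd x
  \le
  \Grp{\int \frac{\dd x}{1+\snorm x^{2k}}}^{1/2}
  \Grp{\int |f\Sp\alpha(x)|^2 (1+\snorm x^{2k})\,\dd x}^{1/2},
\end{align*}
where the first factor is a finite constant $c_0(d)$. For the second factor, the power-mean inequality $(\sum_i x_i^2)^k \le d^{k-1}\sum_i x_i^{2k}$ yields
\begin{align*}
  \int |f\Sp\alpha|^2 (1+\snorm x^{2k})\,\dd x
  \le
  \int |f\Sp\alpha|^2\,\dd x
  +
  d^{k-1}\sum_{i=1}^d \int |x_i^k f\Sp\alpha(x)|^2\,\dd x.
\end{align*}
By Plancherel, with the paper's Fourier convention, $\int |f\Sp\alpha|^2\,\dd x = (2\pi)^{-d}\int |t^\alpha \ft f(t)|^2\,\dd t$, and $\int |x_i^k f\Sp\alpha|^2\,\dd x = (2\pi)^{-d}\int |\Pd_i^k(t^\alpha \ft f(t))|^2\,\dd t$, since the Fourier transform sends $f\Sp\alpha \mapsto \pm t^\alpha \ft f$ and $x_i \mapsto \pm \Pd_i$ on the dual side.

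Expanding by Leibniz,
\begin{align*}
  \Pd_i^k(t^\alpha \ft f(t))
  =
  \sum_{j=0}^{\min(k,\alpha_i)}\binom{k}{j}
  \Pd_i^j(t^\alpha)\,\Pd_i^{k-j}\ft f(t),
\end{align*}
where each $\Pd_i^j(t^\alpha)$ is a constant multiple of a monomial $t^\beta$ with $|\beta|=|\alpha|-j\le m$. Applying Cauchy--Schwarz to this finite sum bounds $\int |\Pd_i^k(t^\alpha\ft f)|^2\,\dd t$ by a constant depending only on $(d,m)$ times a sum of integrals of the form $\int |t^\beta\Pd_i^{k-j}\ft f|^2\,\dd t$ with $|\beta|\le m$ and $0\le k-j\le h(d)$. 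The Plancherel contribution $\int |t^\alpha\ft f|^2\,\dd t$ is of the same form (take $\beta=\alpha$ and the zeroth derivative). Replacing the finitely many $L^2$ norms by their maximum costs only an additional factor depending on $(d,m)$, and collecting constants $c_0(d)$, $(2\pi)^{-d/2}$, $d^{(k-1)/2}$, the binomial coefficients, and the $\partial_i^j(t^\alpha)$ coefficients into a single $c_1(d,m)$ finishes the proof. The only technical content is bookkeeping these constants; no estimate depends on $f$ itself beyond membership in $\rdf(\Reals^d)$, which guarantees all integrals converge.
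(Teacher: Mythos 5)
Your proof is correct and takes essentially the same route as the paper's: Cauchy--Schwarz against a polynomial weight whose reciprocal is integrable (here $2h(d)>d$ is the key numerical fact), Plancherel to pass to the Fourier side, then a Leibniz expansion of $\Pd_i^k(t^\alpha \ft f)$ to arrive at the stated $L^2$ norms. The only cosmetic difference is the choice of weight: you use $1+\snorm{x}^{2k}$ and then the power-mean inequality to reduce to $\sum_i x_i^{2k}$, whereas the paper works directly with the weight $1+\sum_i x_i^{2k}$, which slightly streamlines the reduction at the cost of a less obvious integrability check. Either way the constants are absorbed into $c_1(d,m)$, so the two proofs are interchangeable.
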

\begin{proof}
  Denote $k=h(d)$ and $w(x) = x_1^{2k} + \cdots + x_n^{2k}$.  By
  Cauchy-Schwartz inequality,
  \begin{align*}
    \int |f\Sp\alpha(x)|\,\dd x 
    &\le
    \sqrt{\int \frac{\dd x}{1+w(x)}}
    \sqrt{\int |f\Sp\alpha(x)|^2 (1+w(x))\,\dd x}.
  \end{align*}
  First,
  \begin{align*}
    \int \frac{\dd x}{1+w(x)}
    &\le
    d  \int \frac{\dd x_1}{1+x_1^{2k}}
    \int_{0\le |x_i| \le |x_1|} \dd x_2\cdots\dd x_k
    =
    d 2^d \int_0^\infty \frac{x^{d-1}\,\dd x}{1+x^{2k}} = c'(d).
  \end{align*}
  Next, by Plancherel theorem and properties of the Fourier
  transform (\cite{grafakos:08}, p.~100-102),
  \begin{align*}
    \int |f\Sp\alpha(x)|^2 (1+w(x))\,\dd x
    &=
    \int |f\Sp\alpha(x)|^2 \, \dd x
    + \sum_{i=1}^d \int |x_i^k f\Sp\alpha(x)|^2 \, \dd x 
    \\
    &=
    \int |t^\alpha \ft f(t)|^2 \, \dd t
    + \sum_{i=1}^d \int |\Pd_i^k [t^\alpha \ft f(t)]|^2 \,
    \dd t,
  \end{align*}
  so by $\sqrt{a+b}\le \sqrt{a}+\sqrt{b}$ for $a,b\ge 0$,
  \begin{align*}
    \sqrt{
      \int |f\Sp\alpha(x)|^2 (1+w(x))\,\dd x
    }
    \le
    \sqrt{\int |t^\alpha \ft f(t)|^2 \, \dd t}
    + \sum_{i=1}^d \sqrt{
      \int |\Pd_i^k [t^\alpha \ft f(t)]|^2 \, \dd t
    }.
  \end{align*}
  Since for each $i=1,\ldots,d$,
  \begin{align*}
    \Pd_i^k [t^\alpha \ft f(t)]
    &=
    \sum_{0\le j\le \alpha_i\wedge k} \binom{k}{j}
    \Pd^j_i t^\alpha \cdot\Pd^{k-j}_i \ft f(t)
    \\
    &=
    \sum_{0\le j\le \alpha_i\wedge k} \binom{k}{j}
    \frac{\alpha_i!}{(\alpha_i-j)!}
    t^\alpha t_i^{-j} \Pd^{k-j}_i \ft f(t),
  \end{align*}
  by Minkowski inequality, the desired inequality follows.
\end{proof}

Finally, notice that for $k\ge 1$, there is a unique multivariate
polynomial of $x=(\eno x k)\in \Coms^k$,
\begin{align*}
  P_k(x) = \sum_{i\in I_k} a_i x^i, \quad
  \text{with } a_i\in \Ints_+,
\end{align*}
where $I_k = \{i=(\eno i k)\in \Ints_+^k: \sum_{j=1}^k j i_j = k\}$,
such that for any $k$-times differentiable function $\che$ on
$\Reals$, letting $\chf = \exp(\che)$,
\begin{align} \label{e:derivative-cf}
  \chf\Sp k =
  P_k(\che', \che'', \ldots, \che\Sp k) \chf
  = \sum_{i\in I_k} a_{i_1 i_2 \cdots i_k} (\che')^{i_1}
  (\che'')^{i_2} \cdots (\che\Sp k)^{i_k} \chf.
\end{align}
Note that if $\che(t) = - t^2/2$, then $(-1)^k P_k(\che'(t),
\ldots, \che\Sp k(t))$ is the $k\th$-order Hermite polynomial.

\begin{proof}[Proof of Lemma \ref{lemma:L1-mv-2}]
  Denote $k=h(d)$.  By Lemma \ref{lemma:mv-smooth}, $f_\xi\in
  \rdf(\Reals^d)$.  By Lemma \ref{lemma:Lambda-derivatives}, for
  $i=1,\ldots, d$
  \begin{align*}
    |\Pd_i \che_\xi(t)|
    \le A |\Pd_i \che_{\std X}(t)| 
    + B|\Pd_i \che_{\std T}(t)| + \rx^2\snorm t
    \le (d+\rx^2)\snorm t
  \end{align*}
  and for $2\le j\le k$,
  \begin{align*}
    |\Pd_i^j \che_\xi(t)|
    &\le
    A |\Pd_i^j \che_{\std X}(t)| 
    + B|\Pd_i^j \che_{\std T}(t)| + \rx^2\cf{j=2}
    \\
    &\le
    (\std R)^{j-2} A d + (\std S)^{j-2} B d + \rx^2\cf{j=2}
    \\
    &\le
    Ad + B d + \rx^2\cf{j=2}
    \le
    d + \rx^2.
  \end{align*}
  Thus, by \eqref{e:derivative-cf}, $|\Pd_i^j \chf_\xi(t)| \le
  (d+\rx^2)^j P_j(\snorm t, 1, 1, \ldots, 1) |\chf_\xi(t)|$.  Since
  $P_j(x,1,\ldots,1)$ is a $j\th$-order polynomial of $x\in \Coms$
  with coefficients only depending on $j$, there is a polynomial $h(x)
  = h_{d,m}(x)$ of order no greater than $2m+2k$ with coefficients
  only depending on $(d,m)$, such that for all $\beta$ with
  $|\beta|\le m$, $i=1,\ldots, d$ and $0\le j\le k$,
  \begin{align*}
    \int |t^\beta \Pd_i^j\chf_\xi(t)|^2\,\dd t
    \le
    (d+\rx^2)^{2k} \int h(\snorm t) |\chf_\xi(t)|^2\,\dd t.
  \end{align*}
  Write
  \begin{align*}
    I_1
    =
    \int \cf{\snorm t\le 1/\std R} 
    h(\snorm t) |\chf_\xi(t)|^2\,\dd t,
    \quad
    I_2
    =
    \int \cf{\snorm t> 1/\std R}
    h(\snorm t) |\chf_\xi(t)|^2\,\dd t.
  \end{align*}
  By 1) of Lemma \ref{lemma:Lambda-bounds}, for $\snorm t\le 1/\std
  R$,
  \begin{align*}
    \Re[\che_\xi(t)] \ge
    A \Re[\che_{\std X}(t)] + B\Re[\che_{\std T}(t)]
    \ge C_1^2 C_2\snorm t^2/2.
  \end{align*}
  Then
  \begin{align*}
    I_1
    \le
    \int h(\snorm t) \exp\{-2\Re[\che_\xi(t)]\}\,\dd t
    \le
    \int h(\snorm t) \exp\{-C_1^2 C_2 \snorm t^2\}\,\dd t
    = c'(d,m).
  \end{align*}
  On the other hand, by 2) of Lemma \ref{lemma:Lambda-bounds}, for
  $\snorm t>1/\std R$,
  \begin{align*} 
    \Re[\che_\xi(t)]\ge
    A \Re[\che_{\std X}(t)] + B \Re[\che_{\std T}(t)]
    \ge \varrho_\tau(1/\snorm t)\snorm t^2/2.
  \end{align*}
  Therefore, by change of variable $t=s\omega$ with $s>0$ and
  $\omega\in S$,
  \begin{align*}
    I_2
    &\le
    \int \cf{\snorm t>1/\std R} h(\snorm t)
    \exp(-\varrho_\tau(1/\snorm t) \snorm t^2)\,\dd t
    \\
    &=
    c''(d)
    \int_{1/\std R}^\infty s^{d-1}h(s) e^{-\varrho_\tau(1/s) s^2}
    \,\dd s.
  \end{align*}
  As a result, for all $\beta$ with
  $|\beta|\le m$, $i=1,\ldots, d$ and $0\le j\le k$,
  \begin{align*}
    \int |t^\beta \Pd_i^j\chf_\xi(t)|^2\,\dd t
    \le
    (1+\rx^2/d)^{2k} d^{2k} \Grp{
      c'(d,m) + c''(d)\int_{1/\std R}^\infty s^{d-1} h(s)
      e^{-\varrho_\tau(1/s) s^2}\,\dd s
    }.
  \end{align*}
  Combining the bound with Lemma \ref{lemma:L1-mv}, the proof is
  complete.
\end{proof}

\bibliographystyle{acmtrans-ims}
\bibliography{Levy,ldpdb,LimitTheorems,Misc,Estimate}

\end{document}